\newcommand{\Weierstra}{Weierstrass}
\newcommand{\Gau}{Gauss}
\begin{document}

\font\bbbld=msbm10

\newcommand{\bfA}{\hbox{\bbbld A}}
\newcommand{\bfR}{\hbox{\bbbld R}}
\newcommand{\bfC}{\hbox{\bbbld C}}
\newcommand{\bfZ}{\hbox{\bbbld Z}}
\newcommand{\bfH}{\hbox{\bbbld H}}
\newcommand{\bfQ}{\hbox{\bbbld Q}}
\newcommand{\bfN}{\hbox{\bbbld N}}
\newcommand{\bfP}{\hbox{\bbbld P}}
\newcommand{\bfT}{\hbox{\bbbld T}}
\newcommand{\suchthat}{\mid}
\newcommand{\halo}[1]{\Int(#1)}
\newcommand{\union}{\cup}
\newcommand{\goesto}{\rightarrow}
\newcommand{\bdy}{\partial}
\newcommand{\n}{\noindent}
\newcommand{\tensor}{\otimes}
\newcommand{\smooth}{\hbox{$C^{\infty}$}}
\newcommand{\ds}{\displaystyle}
\newcommand{\tin}{\hbox{$\,\in\,$}}
\newcommand{\po}[1]{\frac{\pi}{#1}}
\newcommand{\dpo}[1]{\hbox{$\ds\frac{\pi}{#1}$}}

\def\Sym{\mathop{\rm Sym}}

\def\Int{\mathop{\rm Int}}
\def\Re{\mathop{\rm Re}}
\def\Im{\mathop{\rm Im}}

\newtheorem{theorem}{Theorem}[section]
\newtheorem{assertion}{Assertion}[section]
\newtheorem{proposition}{Proposition}[section]
\newtheorem{remark}{Remark}[section]
\newtheorem{lemma}{Lemma}[section]
\newtheorem{definition}{Definition}[section]
\newtheorem{claim}{Claim}[section]
\newtheorem{corollary}{Corollary}[section]
\newtheorem{observation}{Observation}[section]
\newtheorem{conjecture}{Conjecture}[section]
\newtheorem{question}{Question}[section]
\newtheorem{example}{Example}[section]
\newtheorem{normalization}{Normalization}[section]

\newbox\qedbox

\newenvironment{proof}{\smallskip\noindent{\bf Proof.}\hskip \labelsep}%
			{\hfill\penalty10000\copy\qedbox\par\medskip}
\newenvironment{proofspec}[1]%
		      {\smallskip\noindent{\bf Proof of #1.}\hskip \labelsep}%
			{\nobreak\hfill\hfill\nobreak\copy\qedbox\par\medskip}
\newenvironment{acknowledgements}{\smallskip\noindent{\bf Acknowledgements.}%
	\hskip\labelsep}{}

\setlength{\baselineskip}{1.3\baselineskip}

\renewcommand{\baselinestretch}{1.0}

\newcounter{i}

\title{Embedded, Doubly--Periodic Minimal Surfaces}

\author{Wayne Rossman, 
        Edward C. Thayer\thanks{Supported by the National Science Foundation
	under grants DMS-9011083 and DMS-9312087 and by the U.S. Department
	of Energy under grant DE-FG02-86ER25015 of the Applied Mathematical
	Science subprogram of the Office of Energy Research}, 
        Meinhard Wohlgemuth\thanks{Supported by SFB 256 at University of Bonn
	and the Alexander von Humboldt-Stiftung}}
\date{}
\maketitle

\begin{abstract}
We consider the question of existence of embedded doubly periodic minimal 
surfaces in $\bfR^3$ with Scherk--type ends, surfaces that topologically are 
Scherk's doubly periodic surface with handles added in various ways.  
We extend the existence results of H. Karcher and F. Wei to more 
cases, and we find other cases where existence does not hold.  
\end{abstract}

\section{Introduction}

 H. Karcher \cite{kar3} proved
the existence of the 
first complete, embedded, doubly--periodic minimal 
surface to be found since H. Scherk's classical 
example, which dates from 1835.
 We denote Karcher's surface by $M_1$ (see Figure \ref{fig:intro_images}, 
left).
 Following this discovery, Wei \cite{wei2} constructed
an embedded, doubly--periodic surface of genus two by
adding a handle to $M_1$ (Figure \ref{fig:intro_images}, center).
 We describe a new embedded, genus two surface that 
results from adding a different type of handle to $M_1$ (Figure 
\ref{fig:intro_images}, right),
and outline the differences between these two genus two 
surfaces.
 In addition, we construct three collections of new, embedded
surfaces of genus three that result from adding either two handles 
of the same type (Figure \ref{fig:2k+1pp_image}) or two handles of 
different type (Figure \ref{fig:intro_images2}).  

 Using a technique discovered by Karcher-Polthier \cite{kp} 
to reduce the number of periods to be considered, we
are able to add ends to the fundamental piece of
each surface presented without increasing the dimension of the 
period problems, 
thereby producing countably many different families 
of new, embedded examples for each of the handle types shown in 
Figures \ref{fig:intro_images} and \ref{fig:2k+1pp_image}.  

 The existence proofs for the genus two surfaces require
solving one--dimensional period problems, and 
 the existence proofs for the genus three surfaces require
solving either one--dimensional or two--dimensional period problems, 
depending on the types of handles we choose.  When the period 
problem is one--dimensional (as for surfaces in 
Figures \ref{fig:intro_images} and \ref{fig:2k+1pp_image}), 
we use the intermediate value theorem 
to solve it.  When it is two--dimensional (as for surfaces 
in Figure \ref{fig:intro_images2}), we achieve a solution by 
using a mapping degree argument, a kind of 
generalization of the intermediate value theorem.   

We find that in the two cases of genus three surfaces with 
four ends and handles of the same type 
the period problems have no solution.  In these 
exceptional cases we demonstrate a natural geometric obstruction 
to existence, an obstruction that disappears when more ends are 
added to the surfaces.  

\begin{figure}
\begin{center}
  \includegraphics[width=7.5cm]{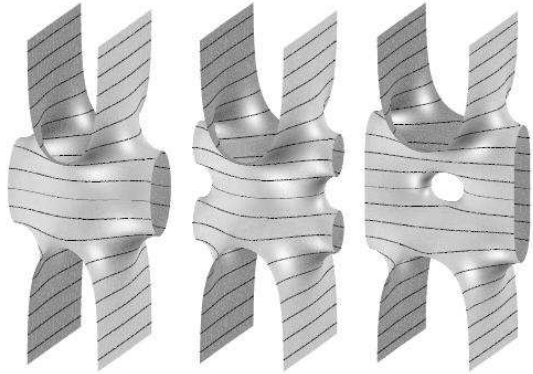} 
\end{center}
   \caption{Fundamental pieces of Karcher's surface $M_1$(left), 
            Wei's surface $M_1^-$(center), and 
 		$M_1^+$ (right).}		\label{fig:intro_images}
\end{figure}

\begin{figure}
\begin{center}
  \includegraphics[width=12.5cm]{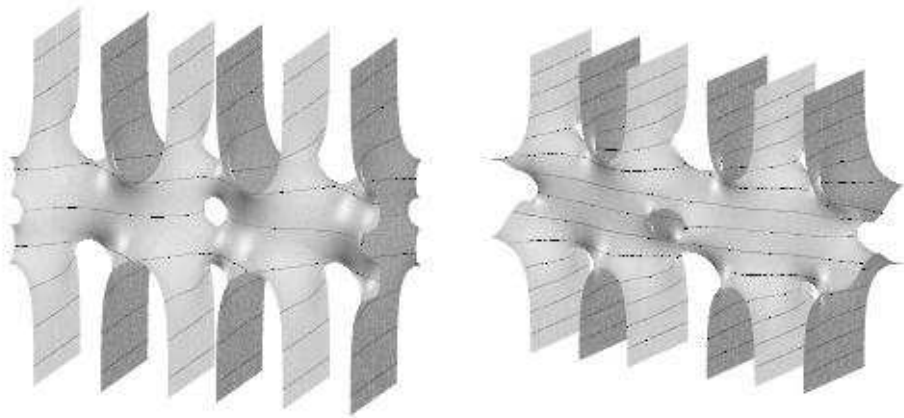} 
\end{center}
   \caption{Fundamental pieces of $M_3^{--}$ and $M_3^{++}$}
                                         	\label{fig:2k+1pp_image}
\end{figure}

\begin{figure}
\begin{center}
  \includegraphics[width=7.5cm]{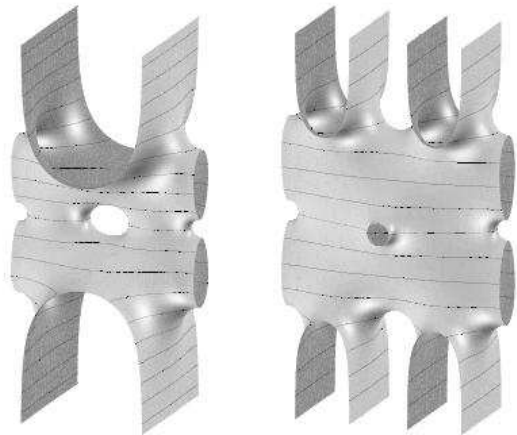} 
\end{center}
   \caption{Fundamental pieces of $M_1^{+-}$ (left), and
		$M_2^{+-}$ (right).}		\label{fig:intro_images2}
\end{figure}

\setcounter{equation}{0}
\section{Overview of the construction}
\begin{figure}
\begin{center}
  \includegraphics[width=12.5cm]{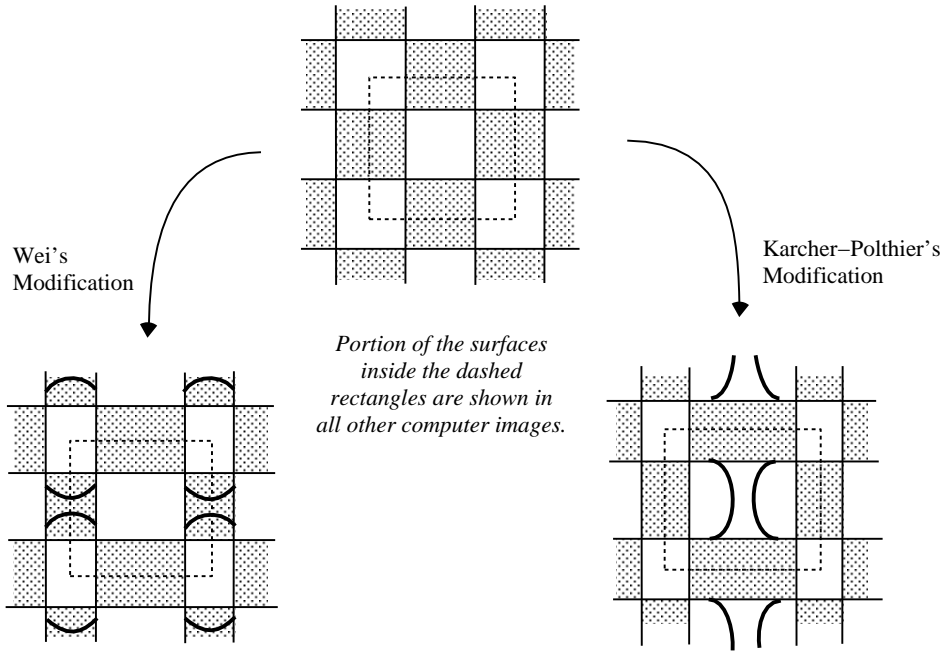} 
\end{center}
   \caption{$M_1$, $M_1^-$, and $M_1^+$ projected onto the
 	$x_1$-$x_2$ plane.}			\label{fig:wt_top_view}
\end{figure}

\begin{figure}
\begin{center}
  \includegraphics[width=4.5cm]{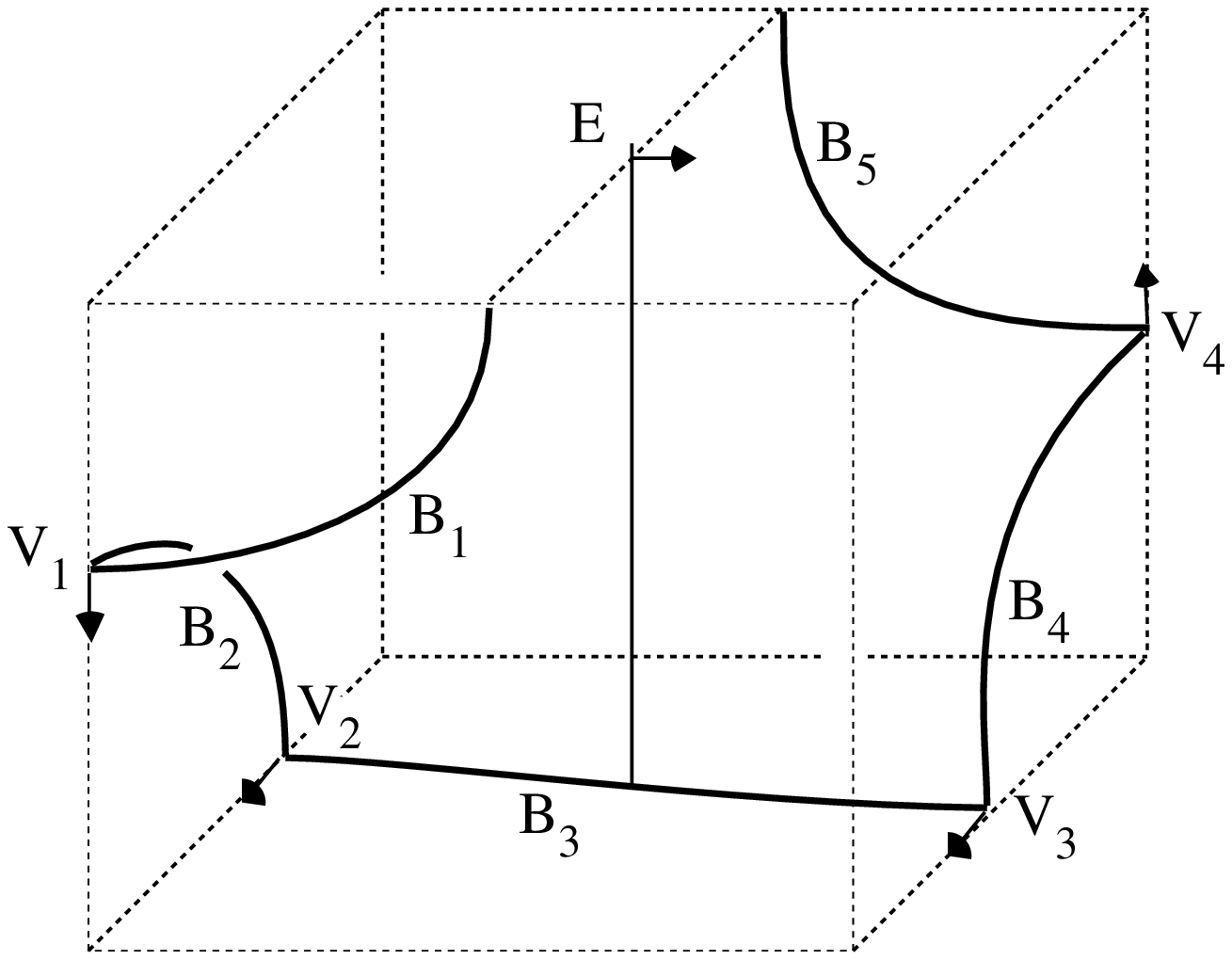} 
  \includegraphics[width=4.5cm]{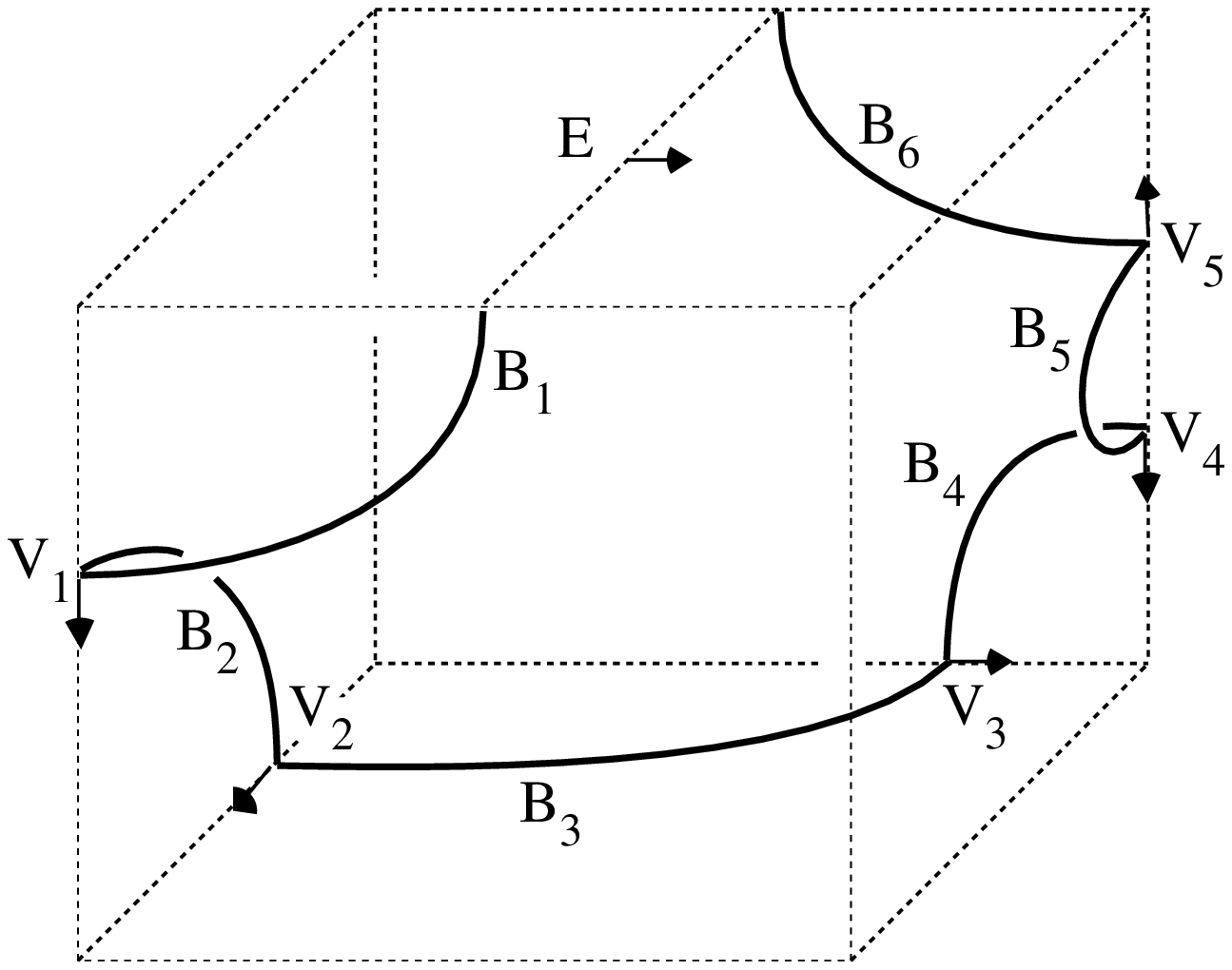} 
  \includegraphics[width=4.5cm]{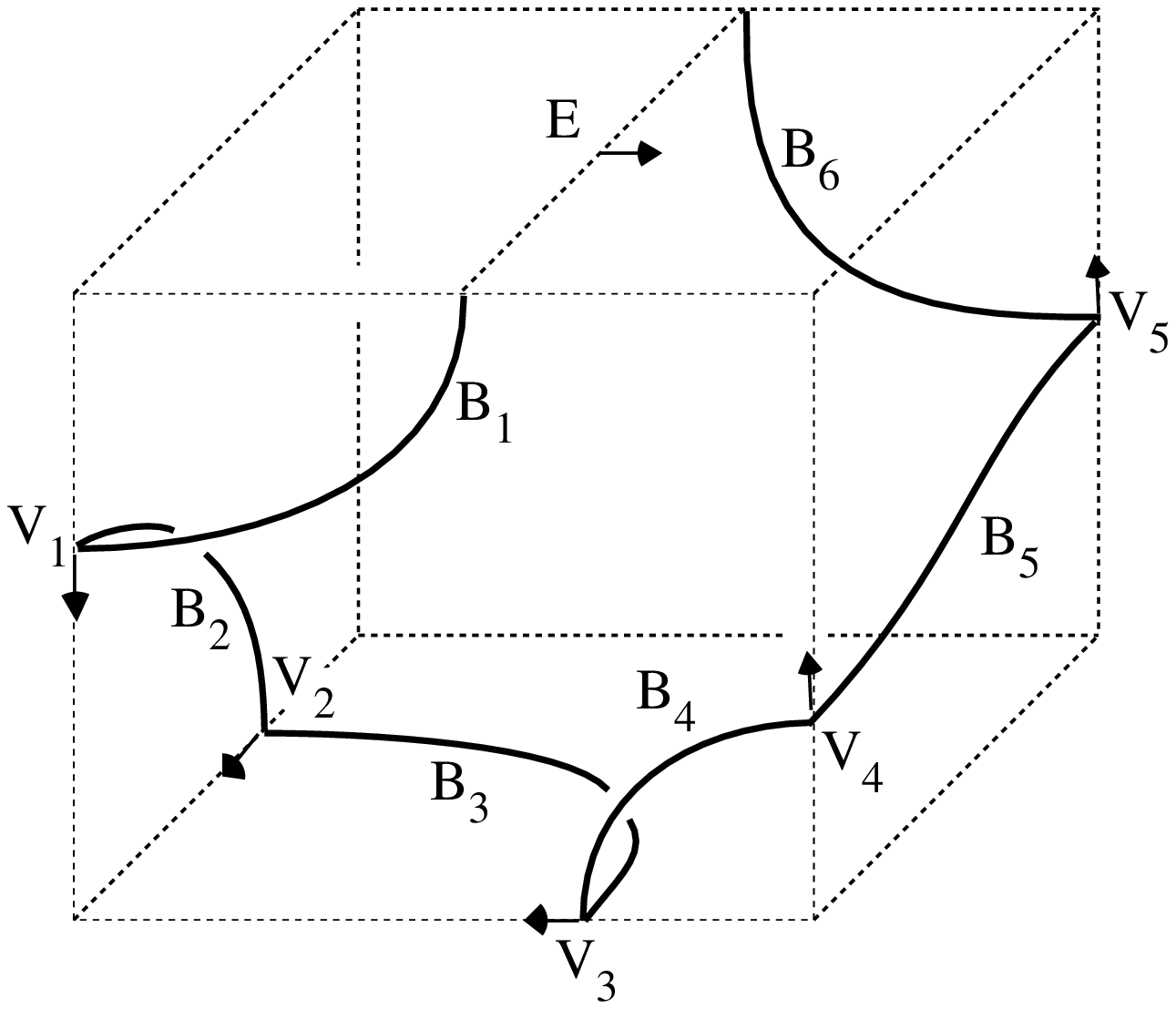} 
\end{center}
   \caption{Sketches of one eighth of $M_1$(left), $M_1^-$(center),
 		and $M_1^+$ (right).}		\label{fig:intro_sketches}
\end{figure}

 Karcher's original surfaces $M_1$ are highly symmetric;
they have three
mutually perpendicular planes of symmetry and 
contain four vertical straight lines (see Figures \ref{fig:intro_images}, 
left and \ref{fig:intro_sketches}, left).  
 The three planes divide the surface into eight pieces.
 Each piece is bounded by planar geodesic curves, and
has one end.
 Since all the surfaces we will discuss here share 
these planar symmetries we will focus on one eighth of 
these surfaces and draw sketches of this portion only.

The first modification of $M_1$ was made by 
F. Wei \cite{wei2}, who constructed a 
one--parameter family of genus two examples 
$M_1^-$ by adding a single handle over
one of the two saddle points of $M_1$ (see Figures 
\ref{fig:intro_images}, center and
\ref{fig:intro_sketches}, center).  
 Recently it was discovered by Karcher--Polthier \cite{kp}
(and the second author independently) that another
modification of $M_1$ was possible.
 This new surface $M_1^+$ results from adding a handle 
to $M_1$ in a different direction, thereby producing
another doubly--periodic, embedded minimal surface
of genus two (see Figures \ref{fig:intro_images}, right and 
\ref{fig:intro_sketches}, right).  

 {\bf Remark on notation:} {\em
In order to distinguish the two genus two surfaces, 
we view $M_1$ from above, 
imagining that $M_1$ projects into the black squares
of an infinite, black and white checkerboard
pattern, with the vertical straight lines projecting onto 
the corners of these squares (see Figure \ref{fig:wt_top_view}).
 From this perspective, the handles added by Wei 
project into the black regions while the new handles 
project into the white ones.
 In both cases, the additional handles modify the 
checker board pattern into a tiling made up of rectangular
regions as is indicated in Figure \ref{fig:wt_top_view}.
 We denote the handles over the black squares with
a superscript `--', and those over the white squares
with a superscript `+'.
 Hence, in this notation, Wei's genus two surface
is referred to as
$M_1^-$, and the new surface discussed in Section \ref{sec:m_k+}
is $M_1^+$.
\newline
 \hspace{0.16in} Each surface discussed in this paper lies in a 
   one--parameter family of embedded surfaces.
    Since we are interested in the topological 
   qualities of these surfaces, 
   our notation thoughout the paper will not reflect the 
   specific surface in the family.
    The subscript indexes the number of ends on each 
   eighth of the surface.
}

 Both $M_1^+$ and $M_1^-$ have smaller symmetry groups 
than Karcher's original surface; in particular, the 
vertical straight lines of $M_1$ are eliminated.
 The question ``Is it possible to add handles to $M_1$
and preserve the original symmetries?'' is a natural one.
 We might, for example, want to add either a `+' or a `--'
type handle and preserve the vertical straight lines.
 Rotation about these lines (via 
the Schwarz reflection principle, Theorem 
\ref{thm:schwarz}) places
another handle over the other saddle point of $M_1$.  
 This would result in a genus three surface with
four Scherk ends.
 It is easy to imagine such a surface for either type 
of handle.
 Indeed, the suggested conjugate contour of one eighth of 
either surface supports a Plateau solution that is a 
Jenkins-Serrin graph \cite{jes1}.  
 So a minimal surface with boundary exists with 
the desired shape, but we only know that 
certain bounding planar curves lie in parallel planes.
 We then must solve the one--dimensional period problem 
or, equivalently, show that the parallel planes coincide.
 We will prove that neither of these period problems are 
solvable, and we do so 
by finding natural obstructions on the corresponding conjugate 
surfaces.
 Understanding these obstructions, we realize it is possible
to overcome them by adding more ends to each surface.
 Because of the desired symmetries, each eighth of these
surfaces must have an odd number of ends.
 Indexing by this number, we show the period problems are
never solved on $M_1^{--}$ and $M_1^{++}$, and 
that for $k \ge 1$ the period problems associated to 
$M_{2k+1}^{--}$ and $M_{2k+1}^{++}$ are solvable.
 The superscript indicates the types of handles added
to $M_1$.  See Figure \ref{fig:2k+1pp_image}.  

 With the addition of each new end, there is a new
associated period.
 In Section \ref{sec:m_k}, we describe a technique found by 
Karcher--Polthier \cite{kp} that shows that one may 
simultaneously solve these end periods.
 Specifically, they observed that a certain simple 
restriction on the 
conjugate contours ensures these end periods are all zero.
 Moreover, this restriction leaves an ample number
of parameters free to allow us to adjust the other periods
associated with the new handles.

 Instead of adding two handles of the same type to $M_1$,
we may also consider surfaces which have 
two handles of different types.
 This produces a family of genus three surfaces 
that no longer have the straight line symmetries of $M_1$.
 Without this additional symmetry, the period problem 
resulting from the new handles is two--dimensional.
 The third author's experience with two--dimensional 
period problems \cite{w} suggested that these period
problems may be solvable.  We 
prove in Section \ref{sec:m_k+-} that $M_{1}^{+-}$ with 
four Scherk--type 
ends exists.  Generalizing the examples $M_{1}^{+-}$ to 
have $4k$ Scherk--type ends 
for $k \ge 2$, numerical evidence suggests the 
existence of $M_{k}^{+-}$ for $k \ge 2$ (see Figure 
\ref{fig:intro_images2}).  

The computer graphics in the figures were created using the MESH software 
produced by James T. Hoffman of the Mathematical Sciences Research Institute, 
Berkeley, California, U.S.A..  

\setcounter{equation}{0}
\section{Background results needed for the construction}	\label{sec:wt_back}
\bigskip
We consider only connected and properly immersed minimal surfaces.
 To establish notation we state the following description 
of the \Weierstra\ Representation.

\bigskip

{\bf \Weierstra--Representation:} \space Let $M$ be
a minimal surface in $\bfR^3$ and $R$ the underlying 
Riemann surface of $M$.
 Then $M$ can be expressed, up to translations, 
in terms of a meromorphic
function $g$ on $R$, the so-called \Gau\ map (since $g$ will be 
stereographic projection of the oriented normal vector of $M$),
and a holomorphic differential $\eta$ on $R$ by
\begin{equation}					\label{eq:WSD1}
   F(p) = Re \, \int_{p_0}^p \, (\phi_1,\phi_2,\phi_3) \; \; \; \; , 
\end{equation}
where $p_0 \in R$ is fixed and
\begin{equation}					\label{eq:WSD2}
   (\phi_1,\phi_2,\phi_3) = \left( \Big( {1\over g} - g \Big) \eta,
	\, i \Big( {1\over g} + g \Big) \eta, \, 2\eta \right) \; .
\end{equation}
 Conversely, let $R$ be a Riemann surface, $g$ a
meromorphic function on $R$, and $\eta$ a holomorphic
differential on $R$. 
 Then (\ref{eq:WSD1}) and (\ref{eq:WSD2}) 
define a conformal
minimal immersion $F: \; R \to \bfR^3$, provided 
the poles and zeros of order $\ell$ of $g$ coincide with the zeros of 
order $\ell$ of $\eta$, and $(\phi_1,\phi_2,\phi_3)$ has no real periods, i.e.
\begin{equation}					\label{eq:WSD3}
   \hbox{Period}_{(\phi_1,\phi_2,\phi_3)}(\gamma) = \int_\gamma 
      (\phi_1,\phi_2,\phi_3) \in i\bfR
\end{equation}
for all closed curves $\gamma$ on $R$. 

\bigskip

 We call $(R,g,\eta)$ the {\em \Weierstra\ data} of the 
minimal surface $M$.  Denoting the universal cover of $R$ by 
$\tilde{R}$, the minimal immersion $F^*:\tilde{R} \to 
\bfR^3$ with the \Weierstra\ data 
$(R,g,i\eta)$ is called 
the {\em conjugate surface} to $M$, and is denoted by $M^*$.  
It is known that any curve of $R$ which is mapped by $F$ to 
a non-straight planar geodesic of $M$ is mapped by $F^*$ to 
a straight line in $M^*$.  
Furthermore, since the \Gau\ map $g$ and the first fundamental 
form are the same for both $M$ and $M^*$, it follows that 
the planar geodesic in $M$ will lie in a plane perpendicular 
to the corresponding line in $M^*$ and that the planar 
geodesic in $M$ will have the same length as the line in 
$M^*$.  We will use these properties 
extensively.  The following known results are also central 
to the arguments we will be making.

\begin{theorem}                 \label{thm:schwarz}
{\em (Schwarz reflection principle)}  If a minimal surface 
$M \subset \bfR^3$ with boundary contains a non-straight 
planar geodesic $\cal C$ (resp. straight line $\cal C$)
in its boundary, then $M$ can be extended smoothly 
across $\cal C$ by reflecting through the plane containing 
$\cal C$ (resp. rotating about $\cal C$).  
\end{theorem}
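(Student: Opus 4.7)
The plan is to prove the principle locally near an interior point $p$ of $\mathcal{C}$ by conformally parametrizing $M$ and reducing to the classical Schwarz reflection for harmonic functions. Choose an isothermal parametrization $F\colon D^+ \to M$ of a neighborhood of $p$, where $D^+ = \{z \in \bfC : |z| < r,\ \Im z \ge 0\}$ and the diameter $I = D^+ \cap \bfR$ maps onto an arc of $\mathcal{C}$ through $p$. Because $F$ is conformal and harmonic, each coordinate $F_j$ is a real-valued harmonic function on $D^+$, with $|\partial_x F| = |\partial_y F|$ and $\partial_x F \perp \partial_y F$ throughout.

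When $\mathcal{C}$ is a planar geodesic, apply an ambient isometry so that its plane is $\{x_3 = 0\}$; then $F_3 \equiv 0$ on $I$. The planar-geodesic hypothesis is equivalent to $M$ meeting the plane orthogonally along $\mathcal{C}$, so the unit normal is horizontal on $I$, the tangent plane of $M$ is vertical, and by conformality $\partial_y F$ is vertical at every point of $I$---yielding the Neumann data $\partial_y F_1 = \partial_y F_2 \equiv 0$ on $I$. I would then extend $F_3$ by odd reflection and $F_1, F_2$ by even reflection across $I$; each extension is harmonic on $\{|z|<r\}$ by Schwarz reflection for harmonic functions, and the resulting map coincides on $D^- := \{\Im z < 0\}$ with the ambient mirror reflection of $F$ through $\{x_3=0\}$. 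When $\mathcal{C}$ is a straight line, position it as the $x_3$-axis; then $F_1 = F_2 \equiv 0$ on $I$, and since $\partial_x F$ is vertical along $I$, conformality forces $\partial_y F$ to be horizontal, giving $\partial_y F_3 \equiv 0$. Odd reflection of $F_1, F_2$ and even reflection of $F_3$ produce a harmonic extension that on $D^-$ realizes $F$ followed by $\pi$-rotation about the $x_3$-axis.

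The main technical point is verifying that this harmonic extension $\widetilde F$ is actually a conformal minimal immersion across $I$, not merely a continuous map. On $D^-$ one has $\widetilde F = R \circ F \circ \sigma$, where $R$ is the ambient isometry (reflection or rotation) and $\sigma(z) = \bar z$, so $\widetilde F$ is a conformal harmonic immersion on each open half-disk separately. The isothermal relations $|\partial_x \widetilde F|^2 = |\partial_y \widetilde F|^2$ and $\partial_x \widetilde F \cdot \partial_y \widetilde F = 0$ are real-analytic in the derivatives of $\widetilde F$, and since each $\widetilde F_j$ is harmonic on the full disk these derivatives are real-analytic; holding on the dense open set $D^+ \cup D^-$, the isothermal relations then hold on all of $\{|z|<r\}$. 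The real geometric content, and the place where the hypothesis is used, lies in deriving the correct mix of Dirichlet and Neumann conditions on $I$---it is precisely the planar-geodesic/straight-line dichotomy that selects odd versus even reflection for each component and so produces the two statements of the theorem.
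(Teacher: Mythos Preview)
The paper does not supply a proof of this theorem; it is quoted in Section~\ref{sec:wt_back} as a known background result and used throughout without justification.  Your argument is the standard one: pass to isothermal coordinates, so that the coordinate functions are harmonic, and use the geometry of the boundary curve to read off the correct Dirichlet/Neumann data for each component, reducing to the classical Schwarz reflection for harmonic functions.  The derivation of those boundary conditions is correct---for a non-straight planar geodesic the surface normal lies in the plane (since the geodesic's acceleration is both normal to $M$ and tangent to the plane), forcing $\partial_y F$ to be vertical along $I$; the straight-line case is the complementary computation.  Your closing step, that conformality persists across $I$ because the isothermal relations are real-analytic identities in the derivatives of a harmonic map, is also the right way to finish.

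The only point you pass over is the existence of an isothermal chart $F:D^+\to M$ that carries the real diameter onto $\mathcal{C}$ with enough boundary regularity to invoke harmonic reflection (in particular, $C^1$ up to $I$ for the Neumann case).  This is classical but not entirely trivial; in a complete write-up one would either assume $M$ is $C^2$ up to the boundary and cite a boundary-regularity theorem for conformal parametrizations, or refer to a source such as \cite{dhkw1}.
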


\begin{theorem}{\em (Krust \cite{dhkw1})}			\label{thm:krust}
    If an embedded minimal surface $F:B\goesto\bfR^3$,
   $B=\{w\tin\bfC : |w| < 1\}$ can be written as a
   graph over a convex domain in a plane, then the
   conjugate surface $F^*:B\goesto \bfR^3$ is also a graph
   over a domain in the same plane.
\end{theorem}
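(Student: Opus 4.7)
The plan is to use the \Weierstra\ representation to express the planar projections of $F$ and $F^*$ in uniformizing coordinates on $B$, and then to exploit convexity by integrating along the $f$-preimage of a straight segment in the convex base. Writing $\eta=e(w)\,dw$ on $B$ and setting $f:=F_1+iF_2$, $f^*:=F_1^*+iF_2^*$, a direct manipulation of (\ref{eq:WSD2}) yields the Wirtinger derivatives
\[
 \partial_w f = -ge,\quad \partial_{\bar w} f = \overline{e/g},\quad \partial_w f^* = -ige,\quad \partial_{\bar w} f^* = -i\,\overline{e/g},
\]
so that $f$ and $f^*$ have the same Jacobian $|e|^2(|g|^2-|g|^{-2})$. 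The graph assumption on $F$ forces $|g|\neq 1$ on $B$; exchanging $g$ with $1/g$ if necessary, one may assume $|g|<1$ throughout. Both $f$ and $f^*$ are then local diffeomorphisms, and the task reduces to promoting local injectivity of $f^*$ to global injectivity.

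Assume for contradiction that $f^*(w_1)=f^*(w_2)$ for some distinct $w_1,w_2\in B$. Because $f$ maps $B$ bijectively onto the convex domain $\Omega$, the straight segment from $f(w_1)$ to $f(w_2)$ lies in $\Omega$ and pulls back under $f^{-1}$ to a smooth path $\delta:[0,1]\goesto B$ along which $f\circ\delta$ is an affine parametrization of that segment. Setting $a(t):=-g(\delta(t))\,e(\delta(t))\,\delta'(t)$ and $b(t):=\overline{e(\delta(t))/g(\delta(t))}\cdot\overline{\delta'(t)}$, the chain rule gives $(f\circ\delta)'(t)=a(t)+b(t)$, which is identically equal to the constant $c:=f(w_2)-f(w_1)\neq 0$, while the analogous computation yields $(f^*\circ\delta)'(t)=i(a(t)-b(t))$. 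The assumption $f^*(w_1)=f^*(w_2)$ therefore gives $\int_0^1(a-b)\,dt=0$, which combined with $\int_0^1(a+b)\,dt=c$ forces $\int_0^1 a\,dt=\int_0^1 b\,dt=c/2$.

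A contradiction then follows from the pointwise inequality $|a(t)|<|b(t)|$, which is simply $|g(\delta(t))|<1$ rescaled by $|e(\delta(t))\,\delta'(t)|$ and hence strict off an isolated set. Substituting $a=c-b$ into $|b|^2>|a|^2$ and simplifying yields $\Re(\bar c\,a(t))<\frac{1}{2}|c|^2$ for almost every $t$; integrating and inserting $\int_0^1 a\,dt=c/2$ produces $\frac{1}{2}|c|^2<\frac{1}{2}|c|^2$, the desired inconsistency. Hence $f^*$ is injective on $B$, and $F^*$ is a graph over the (not necessarily convex) domain $f^*(B)$ in the same plane.

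The principal obstacle is identifying the correct path $\delta$ along which to integrate: it must be the $f$-preimage of a straight segment in $\Omega$, whose very existence requires the convexity of $\Omega$, and it is exactly this choice that makes $(f\circ\delta)'(t)$ a constant, so that the decomposition $a+b$ can be played against the pointwise bound $|a|<|b|$. Once the path is at hand, the remaining ingredients---the \Weierstra\ differentiation and the elementary triangle-type estimate coming from $|g|<1$---are short and routine.
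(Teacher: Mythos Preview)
The paper does not give its own proof of this statement: Theorem~\ref{thm:krust} is quoted from \cite{dhkw1} as background and invoked later without further argument, so there is nothing in the paper to compare your proof against.

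That said, your argument is correct and is in fact the standard proof attributed to Krust. The computations of the Wirtinger derivatives from (\ref{eq:WSD2}) are accurate, the Jacobian $|ge|^2-|e/g|^2$ is indeed nonvanishing once $|g|\neq 1$, and the key step---pulling back the straight segment in $\Omega$ via $f^{-1}$ so that $(f\circ\delta)'$ becomes the constant $c$---is exactly the place where convexity enters. One minor remark: your hedge ``strict off an isolated set'' is unnecessary. Under the regularity condition in the \Weierstra\ representation, zeros of $e$ occur only at zeros of $g$ (since $|g|<1$ rules out poles), and at such points $e/g$ has a nonzero finite value while $ge=0$; combined with $\delta'(t)\neq 0$ (because $f^{-1}$ is a diffeomorphism and $c\neq 0$), the inequality $|a(t)|<|b(t)|$ is strict at every $t\in[0,1]$. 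The integrated contradiction $\tfrac12|c|^2<\tfrac12|c|^2$ then follows cleanly.
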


\begin{theorem}{\em (Jenkins-Serrin \cite{jes1})}		\label{thm:Jen-Ser}
    Let $D$ be a bounded convex domain such that $\partial D$
   contains two sets of finite numbers of open straight segments 
   $\{ A_i \}, \{ B_j \}$ with the property that no two segments
   $A_i$ and no two segments $B_j$ have a common endpoint.
    Let the remaining portion of $\partial D$ consist of 
a finite number of open arcs $\{ C_k \}$, and of endpoints
   of $A_i$, $B_j$, and $C_k$.
    Let $\cal P$ denote a simple closed polygon whose vertices 
   are chosen from among the endpoints of the $A_i$ and $B_j$.
    Let 
   \begin{eqnarray*}
	\alpha =  \sum_{A_i\subset\cal P}length(A_i) \; , \; \; \; \; 
	\beta  = \sum_{B_j\subset\cal P}length(B_j) \; , \; \; \; \; 
	\gamma = \mbox{length of perimeter of }{\cal P} \; .
   \end{eqnarray*}
    Then if $\{ C_k \} \ne \emptyset$, there exists a solution 
   of the minimal surface equation in $D$ which assumes the 
   value $+\infty$ on each $A_i$, $-\infty$ on each $B_j$,
   and any assigned bounded continuous data on each open arc $C_k$
   if and only if
   \begin{displaymath}
	2\alpha < \gamma \mbox{   and   } 2\beta < \gamma
   \end{displaymath}
   for each polygon $\cal P$ chosen as above.
    Moreover, the solution is unique when it exists.
\end{theorem}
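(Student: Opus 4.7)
The plan is to handle necessity and sufficiency of the inequalities $2\alpha<\gamma$ and $2\beta<\gamma$ separately, with existence proved by a monotone approximation in the boundary data and uniqueness via a flux/maximum-principle argument. The unifying tool is the one-form $\omega_u = \langle Tu,\cdot\rangle^\perp$ built from the normalized gradient $Tu = \nabla u/\sqrt{1+|\nabla u|^2}$. Because $u$ satisfies the minimal surface equation exactly when $\mathop{\rm div}(Tu)=0$, this one-form is closed, so its integral around any closed curve in $D$ vanishes.

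For necessity, I would pick a simple closed polygon $\mathcal P$ with vertices among the endpoints of the $A_i,B_j$ and integrate $\omega_u$ around $\partial \mathcal P$. Along any edge lying inside an $A_i$ (where $u\to+\infty$), a local barrier argument forces $Tu$ to tend to the outward unit conormal, so the contribution approaches $+\mathop{\rm length}(A_i)$; along edges inside $B_j$ the contribution approaches $-\mathop{\rm length}(B_j)$; along any other edge of $\partial \mathcal P$ lying in $D$, the integrand is strictly less than $1$ in absolute value, so the contribution has absolute value strictly less than the edge length. Summing and setting the total equal to zero gives $\alpha-\beta$ bounded in absolute value by $\gamma-\alpha-\beta$, which is exactly $2\alpha<\gamma$ and $2\beta<\gamma$.

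For existence I would approximate: solve the ordinary Dirichlet problem for the minimal surface equation with boundary data $+n$ on each $A_i$, $-n$ on each $B_j$, the prescribed data on each $C_k$, and a continuous interpolation near the endpoints (standard theory gives a unique classical solution $u_n$). The sequence $\{u_n\}$ is monotone increasing in $n$, so either it converges locally uniformly to a solution $u$ of the required problem, or the \emph{divergence set} $\{u=+\infty\}$ is a nonempty open subset of $D$ bounded by a polygon. The main obstacle, and where the hypotheses really enter, is ruling out the latter possibility. Following the Jenkins--Serrin strategy, I would show that the boundary of such a divergence component consists of some of the $A_i$ together with interior segments where $Tu_n$ converges to the unit conormal of the component; running the flux identity $\int_{\partial \mathcal P}\omega_{u_n}=0$ in the limit then forces $2\alpha=\gamma$ for the associated polygon, contradicting the strict hypothesis. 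A symmetric argument excludes $\{u=-\infty\}$ components. Interior gradient estimates then produce a smooth solution, and barriers cut out of Scherk's surface give the correct boundary behavior on $A_i$ and $B_j$.

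Uniqueness is comparatively clean: given two solutions $u_1,u_2$, their difference is bounded (both blow up together on $A_i\cup B_j$ and agree on $C_k$), and the quantity $\langle Tu_1-Tu_2,\nabla(u_1-u_2)\rangle$ is pointwise nonnegative with zero divergence; integrating against a cutoff that concentrates near $\partial D$ and using the boundary data forces $\nabla u_1=\nabla u_2$, hence $u_1\equiv u_2$. The delicate step throughout the proof is calibrating the flux one-form in a neighborhood of the endpoints of $A_i$ and $B_j$, where $Tu$ need not have a clean limit, and this is where I expect the bulk of the technical work.
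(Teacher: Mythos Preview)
The paper does not prove this theorem at all: it is stated in Section~\ref{sec:wt_back} as a background result and attributed to Jenkins and Serrin \cite{jes1}, with no proof supplied. So there is nothing in the paper to compare your proposal against.

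That said, your sketch is a reasonable outline of the original Jenkins--Serrin argument. The flux form $\omega_u$ built from $Tu=\nabla u/\sqrt{1+|\nabla u|^2}$, the monotone approximation by solutions $u_n$ with finite data $\pm n$, the analysis of divergence sets via the flux identity, and the uniqueness via the monotonicity of $T$ are all the right ingredients. One point to be careful about: your existence step says the sequence $\{u_n\}$ is monotone increasing in $n$, but with data $+n$ on the $A_i$ and $-n$ on the $B_j$ the sequence is neither monotone increasing nor decreasing; Jenkins and Serrin instead extract a locally uniformly convergent subsequence via interior gradient estimates and a compactness argument, and the structure theorem for divergence sets (that their boundaries are chords of $D$ along which $Tu_n$ tends to the unit conormal) requires more work than a one-line appeal to monotonicity. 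The endpoint analysis you flag as delicate is indeed where most of the effort in \cite{jes1} goes.
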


\begin{remark}				\label{rmk:wayne}
Note that in Theorem \ref{thm:Jen-Ser}, we allow the possibility that 
two different $C_k$ have a common endpoint.  We may have jump 
discontinuities in the boundary data at the points where two 
different $C_k$ meet.  It follows from the arguments in 
\cite{jes1} that, for $D$ as in Theorem \ref{thm:Jen-Ser}, 
if $u_1$ and $u_2$ are two solutions of the minimal surface equation 
such that $u_1 = u_2 = + \infty$ on each $A_i$ and 
$u_1 = u_2 = - \infty$ on each $B_j$ and $u_1 \ge u_2$ on 
each $C_k$, then $u_1 \ge u_2$ in the interior of $D$.  
\end{remark}

\setcounter{equation}{0}
\section{The Examples $\bf M_k$}			\label{sec:m_k}

 An immediate application of Theorems \ref{thm:krust} and 
\ref{thm:Jen-Ser} is to prove
that one can add more ends to Karcher's genus one 
surface $M_1$, thereby creating the surfaces $M_k$. 

\begin{theorem}					\label{thm:M_k}
    For each $k\ge 2$, there exists a one--parameter family
   $M_k$ of embedded, doubly--periodic minimal surfaces of 
   genus one with $4k$ Scherk--type ends.
\end{theorem}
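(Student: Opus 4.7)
The plan is to apply the conjugate surface method directly: construct the conjugate of the fundamental piece of $M_k$ as a Jenkins--Serrin graph using Theorem \ref{thm:Jen-Ser}, and then transfer embeddedness back to the original surface using Krust's theorem (Theorem \ref{thm:krust}). This is exactly the ``immediate application'' alluded to just before the theorem, with the additional end periods handled by the Karcher--Polthier trick described in Section 2.

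First I would require $M_k$ to carry the same three mutually orthogonal reflection planes as Karcher's surface $M_1$, so that one-eighth of $M_k$ is a minimal disk bounded by three planar geodesic arcs (one in each symmetry plane) together with $k$ Scherk-type ends. The full surface is then recovered from this fundamental piece by iterated Schwarz reflection across the bounding geodesics, invoking Theorem \ref{thm:schwarz}. Next I would pass to the conjugate fundamental piece: under conjugation each planar geodesic becomes a straight segment in the coordinate plane perpendicular to the original, of the same length, and each Scherk-type end becomes a vertical half-line. I would choose a convex polygonal domain $D$ in a horizontal coordinate plane, together with a labeling of $\partial D$, so that the conjugate piece is realized as the graph over $D$ of a solution of the minimal surface equation with boundary values $+\infty$ on an edge set $\{A_i\}$ (corresponding to upward-pointing ends), $-\infty$ on $\{B_j\}$ (downward ends), and prescribed bounded continuous data on the remaining arcs $\{C_\ell\}$ (which conjugate to the three bounding planar geodesics after appropriate identification). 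Theorem \ref{thm:Jen-Ser} produces such a graph as soon as the strict inequalities $2\alpha<\gamma$ and $2\beta<\gamma$ are verified for every admissible polygon $\mathcal{P}$.

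Once the conjugate is a graph over the convex domain $D$, Theorem \ref{thm:krust} shows that the original one-eighth piece is itself a graph over a (convex) domain in the same plane, hence embedded; iterated Schwarz reflection across the bounding geodesics then yields a complete embedded doubly periodic minimal surface of genus one with $4k$ Scherk-type ends. The simultaneous vanishing of the $k$ end-periods is ensured by the Karcher--Polthier constraint on the positions of the vertical half-lines in the conjugate contour, as described in Section 2. The remaining handle period closes up automatically: the straight-line boundary of the conjugate piece is by construction the boundary of the closed polygon $D$, and closure of this polygon translates directly into closure of the corresponding planar geodesic boundary of $M_k$. The one-parameter family is then obtained by varying the free edge lengths of $D$ throughout the open set on which the Jenkins--Serrin inequalities remain strict.

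The main obstacle is the combinatorial check of the inequalities $2\alpha<\gamma$ and $2\beta<\gamma$ for every admissible polygon $\mathcal{P}\subset\overline{D}$, together with the identification of the open range of the continuous parameter on which all these strict inequalities persist. This requires an explicit description of $D$, of the partition $\{A_i\}\cup\{B_j\}\cup\{C_\ell\}$ of $\partial D$, and of how the Karcher--Polthier constraint fixes the positions of the $A_i$ and $B_j$ relative to the $C_\ell$. A secondary but more routine matter is verifying that the assembled surface has exactly the claimed topology (genus one, $4k$ ends) and is doubly rather than triply periodic; this should follow transparently from the symmetry of $D$ and the edge labeling once these are written down.
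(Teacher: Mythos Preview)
Your plan is exactly the paper's approach, but you have left the domain $D$ vague and thereby inflated the difficulty of the Jenkins--Serrin step. In the paper $D$ is simply a rectangle: three sides carry height-zero data (these are the $C_\ell$), and the fourth side is subdivided into $k$ equal segments carrying alternately $+\infty$ and $-\infty$ (these are the $A_i$ and $B_j$). With this choice the endpoints of all the $A_i$ and $B_j$ lie on a single line, so no simple closed polygon $\mathcal{P}$ with vertices among them exists at all, and the inequalities $2\alpha<\gamma$, $2\beta<\gamma$ are vacuously satisfied. Thus what you call ``the main obstacle'' disappears once the domain is pinned down. The one-parameter family then comes from varying the length of the two opposite height-zero sides.

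Your paragraph about a ``remaining handle period'' closing up automatically is also off: for $M_k$ the only periods are the $k-1$ end periods (residues at the punctures), and these are killed by the equal-length condition $\mbox{length}(A_i)=\mbox{length}(B_j)=\epsilon$; there is no separate handle period to discuss for the genus-one surface. The sentence about closure of the polygonal boundary of $D$ forcing closure of the planar-geodesic boundary is not quite right either, since the conjugate contour is a graph over $D$ with infinite rays, not the boundary of $D$ itself.
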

\begin{proof}
    Fix $k$.
    The conjugate boundary of one eighth of $M_k$ is a
   graph over a rectangular domain with three sides at
   height zero and the fourth edge subdivided into $k$
   segments with heights alternating between $+\infty$
   and $-\infty$.
    Theorem \ref{thm:Jen-Ser} yields a Plateau solution
   with this boundary. Then Theorem \ref{thm:krust}, together with 
   Theorem \ref{thm:schwarz} and the
   maximum principle, gives the embedded surfaces $M_k$
   from these solutions.  
    The period problems associated to the ends, which equal
   the residues at the end punctures on the compact
   Riemann surface, are solved by 
   choosing the $A_i$ and $B_j$ to all be of the same length.
    Varying the length of the opposing zero height 
   sides of the rectangular domain yields a one-parameter 
   family of surfaces.
\end{proof}

 On the other hand, we immediately have: 
\begin{corollary}
   $M_{k}$ is a $k$--fold covering of $M_1$.
\end{corollary}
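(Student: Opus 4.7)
The plan is to exhibit a translation $T$ of $\bfR^3$ under which $M_k$ is invariant and for which $M_k/\langle T\rangle = M_1$, giving the desired $k$-fold cover $M_k \to M_1$. Let $\ell$ be the common length of the segments $A_i$ and $B_j$ in the Jenkins--Serrin data for $M_k$; the subdivided edge of the rectangular domain has total length $k\ell$. I take $T$ to be the translation by $2\ell$ along the direction of this edge.

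First I would check that $T$ is a symmetry of $M_k$. The alternating $+\infty/-\infty$ pattern of equal-length segments is manifestly invariant under shifting by $2\ell$, so after one performs the Schwarz reflections of Theorem \ref{thm:schwarz} (plane reflections across the three zero-height sides and straight-line rotations about the vertical lines above the $\pm\infty$ segments) to build all of $M_k$ from one eighth, the translate $T(M_k^{1/8})$ is a minimal graph whose boundary data matches the adjacent fundamental piece produced by the reflection group. Uniqueness in Theorem \ref{thm:Jen-Ser} then forces the two to coincide, and propagating this identification through the reflection group yields $T(M_k) = M_k$.

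Next I would identify the quotient. The surface $M_k/\langle T\rangle$ is still embedded, minimal, and doubly periodic; a quick Euler characteristic check gives $\chi(M_k)/k = -4$, so it has genus $1$ with $4$ Scherk-type ends. One eighth of $M_k/\langle T\rangle$ is a Plateau solution over a rectangle of side $\ell$ with three zero-height edges and one $+\infty$ edge, which is exactly the Jenkins--Serrin data that produces $M_1$ (the $k=1$ case in the proof of Theorem \ref{thm:M_k}). By uniqueness of the Jenkins--Serrin solution and the subsequent Schwarz extension, $M_k/\langle T\rangle = M_1$, and the quotient projection is the desired $k$-fold covering.

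The main obstacle is the first step: because $T$ shifts the Plateau rectangle only partially into itself, matching $T(M_k^{1/8})$ to an adjacent reflected piece requires careful bookkeeping of how $T$ interleaves with the reflection group that builds $M_k$. Once the invariance of the alternating boundary pattern under $T$ is combined with the uniqueness portion of Theorem \ref{thm:Jen-Ser}, the symmetry propagates, and the rest of the argument is routine identification of topological and geometric data.
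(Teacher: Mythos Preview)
Your strategy is workable but considerably more elaborate than the paper's, which is two sentences: Schwarz-reflecting the conjugate contour of one eighth of $M_1$ about its boundary line segments produces exactly the conjugate contour of one eighth of $M_k$, and the uniqueness clause of Theorem~\ref{thm:Jen-Ser} then forces the conjugate one-eighth of $M_k$ to be the $k$-fold reflection of that of $M_1$, directly exhibiting the cover. No translation symmetry, quotient identification, or Euler-characteristic count is needed.

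Your write-up also blurs the original surface with its conjugate. The shift by $2\ell$ along the subdivided edge is a symmetry of the \emph{conjugate} Jenkins--Serrin graph (it is the composition of $180^\circ$ rotations about two adjacent vertical boundary lines); on the original $M_k$ the corresponding rigid motion is a vertical translation by twice the distance between the horizontal symmetry planes conjugate to those lines, not by $2\ell$. Likewise, the ``plane reflections across the three zero-height sides'' should be straight-line rotations on the conjugate side. Once this is sorted out your argument collapses to the paper's: the extra $\bfZ/k$ symmetry you are looking for on the conjugate one-eighth is generated precisely by Schwarz reflection about a boundary segment, and uniqueness in Theorem~\ref{thm:Jen-Ser} does the identification in one stroke.
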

\begin{proof}
    Schwarz reflection (Theorem \ref{thm:schwarz}) 
   about line segments on the
   bounding conjugate contour for $M_1$ produces the bounding
   conjugate contour for $M_k$ for any $k$.
    The uniqueness of the minimal graphs in 
   Theorem \ref{thm:Jen-Ser} completes the proof.
\end{proof}

 We included these examples $M_k$ because the technique
used to solve the $k$--dimensional period problem
arising from the additional ends is used throughout 
the paper.
 In particular, we have
\begin{lemma}						\label{lem:mk}
    Each collection of surfaces, 
   $M_k^+$, $M_k^-$, $M_{2k+1}^{++}$, $M_{2k+1}^{--}$, 
   $M_{k}^{+-}$,
   results from adding ends and handles to $M_1$, and 
   the period problems arising from the additional ends 
   are all solved as above, i.e. by requiring 
   \begin{displaymath}
	\epsilon = \mbox{length}(A_i) = \mbox{length}(B_j)
   \end{displaymath}
   to be constant for all $i, j$.
\end{lemma}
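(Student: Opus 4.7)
The plan is to reduce the lemma to the mechanism already used in the proof of Theorem \ref{thm:M_k} and check that the reduction works uniformly for all five families listed. In each case the conjugate boundary of one eighth of the surface is a polygonal contour that bounds a Jenkins--Serrin graph over a convex planar domain. The segments $A_i$ (height $+\infty$) and $B_j$ (height $-\infty$) correspond exactly to the Scherk--type ends, while the finite polygonal pieces encode the straight lines and handle--defining planar geodesics of the minimal surface itself.

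First, I would identify the end period at each puncture with a residue. Near each end puncture $p_\ell$ on the compact Riemann surface, the \Weierstra\ data $(\phi_1,\phi_2,\phi_3)$ from (\ref{eq:WSD2}) has a simple pole, so the period of a small loop around $p_\ell$ equals $2\pi i$ times the residue there. On the conjugate surface these residues become horizontal translations at the infinite ends, and looking at the conjugate contour one sees that each such translation can be expressed as a signed linear combination of the lengths of the $A_i$ and $B_j$ segments adjacent to $p_\ell$ (the alternating $\pm\infty$ structure forces the coefficients to alternate in sign).

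Second, I would impose $\mathrm{length}(A_i)=\mathrm{length}(B_j)=\epsilon$ for all $i,j$. Because the signed combination giving each end period pairs one $A_i$ against one $B_j$, the equal--length condition makes every such combination telescope to $0$. Hence all end periods vanish simultaneously, exactly as in the proof of Theorem \ref{thm:M_k}. This is the Karcher--Polthier observation \cite{kp}.

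Third, I would check that after imposing the single scale $\epsilon$ on the end segments, enough free parameters survive to attack the remaining handle period problem. In the families $M_k^{\pm}$ and $M_{2k+1}^{\pm\pm}$ one parameter remains (the height or length of the handle edge) and the handle period problem is one--dimensional; in the $M_k^{+-}$ family two parameters remain, matching the two--dimensional period problem. The main obstacle is the bookkeeping in the second step: one must unfold the conjugate contour for each of the five families and verify explicitly that the residue at every end puncture is a signed sum of one $A_i$ length minus one $B_j$ length, so that the equal--length condition really does kill all end periods without overdetermining the handle parameters.
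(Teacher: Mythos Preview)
Your proposal has a genuine gap at the crucial step. In step 2 you assert that ``looking at the conjugate contour one sees that each such translation can be expressed as a signed linear combination of the lengths of the $A_i$ and $B_j$ segments,'' but this is precisely the statement that requires proof, and the paper devotes its entire appendix (eight claims over several pages) to establishing it. The difficulty is that the construction begins from a Jenkins--Serrin graph, not from explicit \Weierstra\ data, so there are no residues to compute directly; the \Weierstra\ data listed later in the paper is derived \emph{after} existence is known and cannot be invoked in the proof of the lemma. What must be shown geometrically is that at each infinite end of the Jenkins--Serrin graph the normal vector converges to $\pm\vec e_1$, so that the end is asymptotic to a vertical half-strip of width exactly $\epsilon$. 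The paper proves this by (i) bounding the total curvature via \Gau--Bonnet and a limiting argument, (ii) using the fact that the Gauss image misses the north pole to conclude stability of a reflected extension, (iii) applying Schoen's curvature estimate for stable minimal surfaces to force $|K|\to 0$ at the end, and (iv) deducing that $\vec N\to\pm\vec e_1$. Only then does conjugation give that the two bounding planar geodesics of each end lie in horizontal planes at distance exactly $\epsilon$.

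Your specific mechanism is also not quite right: the end period is not a difference ``one $A_i$ minus one $B_j$'' that telescopes to zero. Each end individually contributes a vertical period of magnitude $2\epsilon$ (reflection through $\hat P_1$ composed with reflection through $\hat P_2$), and the periods alternate in sign because the boundary data alternates between $+\infty$ and $-\infty$. The equal-length condition ensures all these translations have the \emph{same} magnitude and hence lie in a common rank-one lattice; it does not make them individually vanish. Without the asymptotic analysis above, one cannot even conclude that the period is a pure vertical translation, let alone compute its length from the contour.
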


A proof of this lemma is contained in the appendix of this paper.  

The observation that this restriction on the conjugate contour solves
all the periods arising from additional ends demonstrates that these
periods are independent of the periods arising from additional handles.
 This restriction enables us to eliminate all but one or 
two periods in these surfaces, so we may focus only 
on the periods arising from the new handles.

\setcounter{equation}{0}
\section{The Examples $\bf M_k^+$}			\label{sec:m_k+}
\begin{figure}
\begin{center}
  \includegraphics[width=7.5cm]{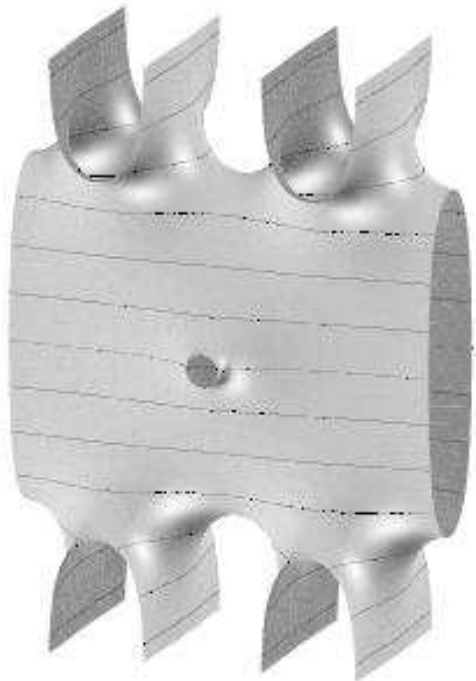} 
\end{center}
  \caption{Fundamental piece of $M_2^+$.}           \label{fig:2k+_image}
\end{figure}

\begin{figure}
\begin{center}
  \includegraphics[width=8.5cm]{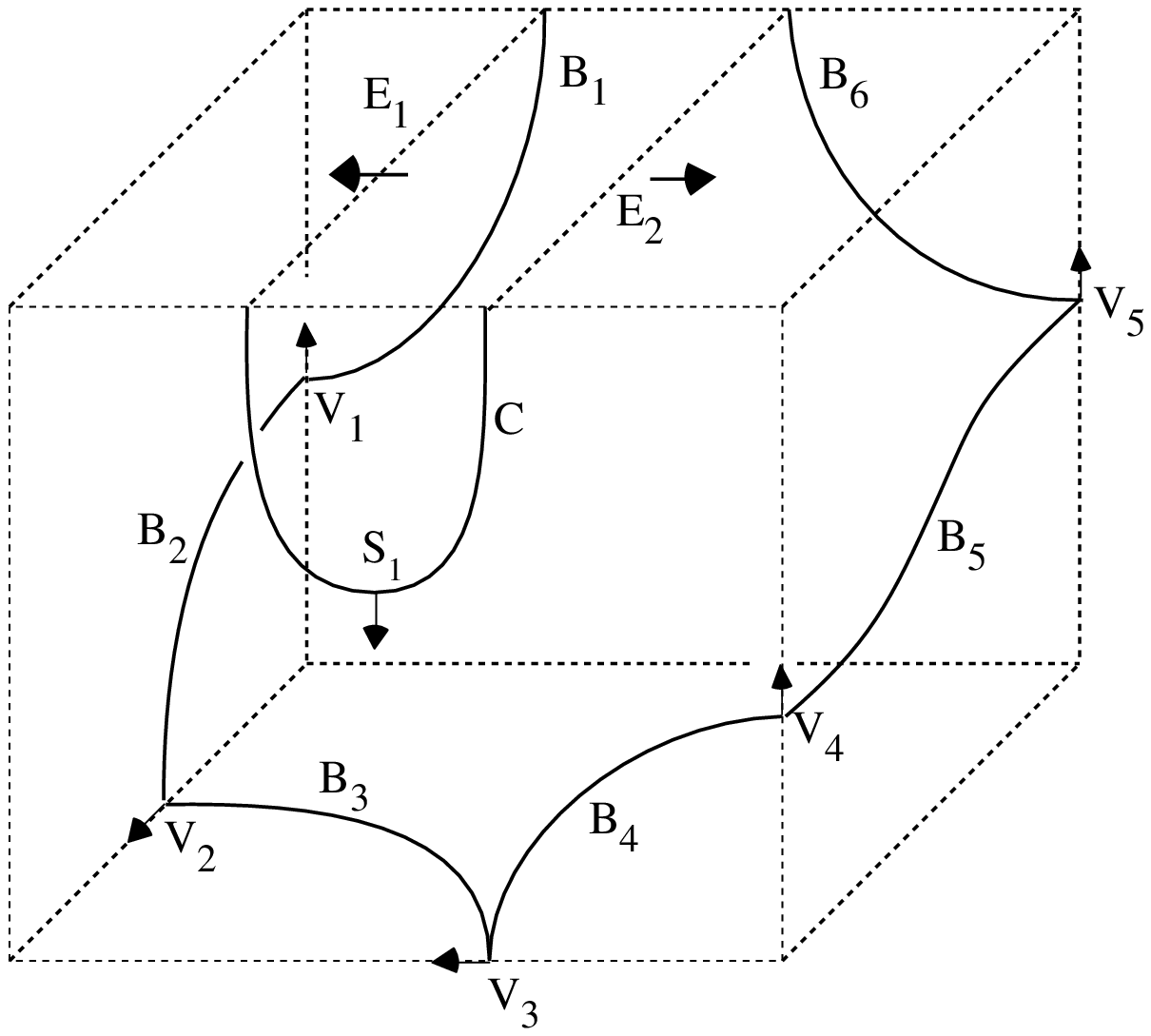} 
  \includegraphics[width=8.5cm]{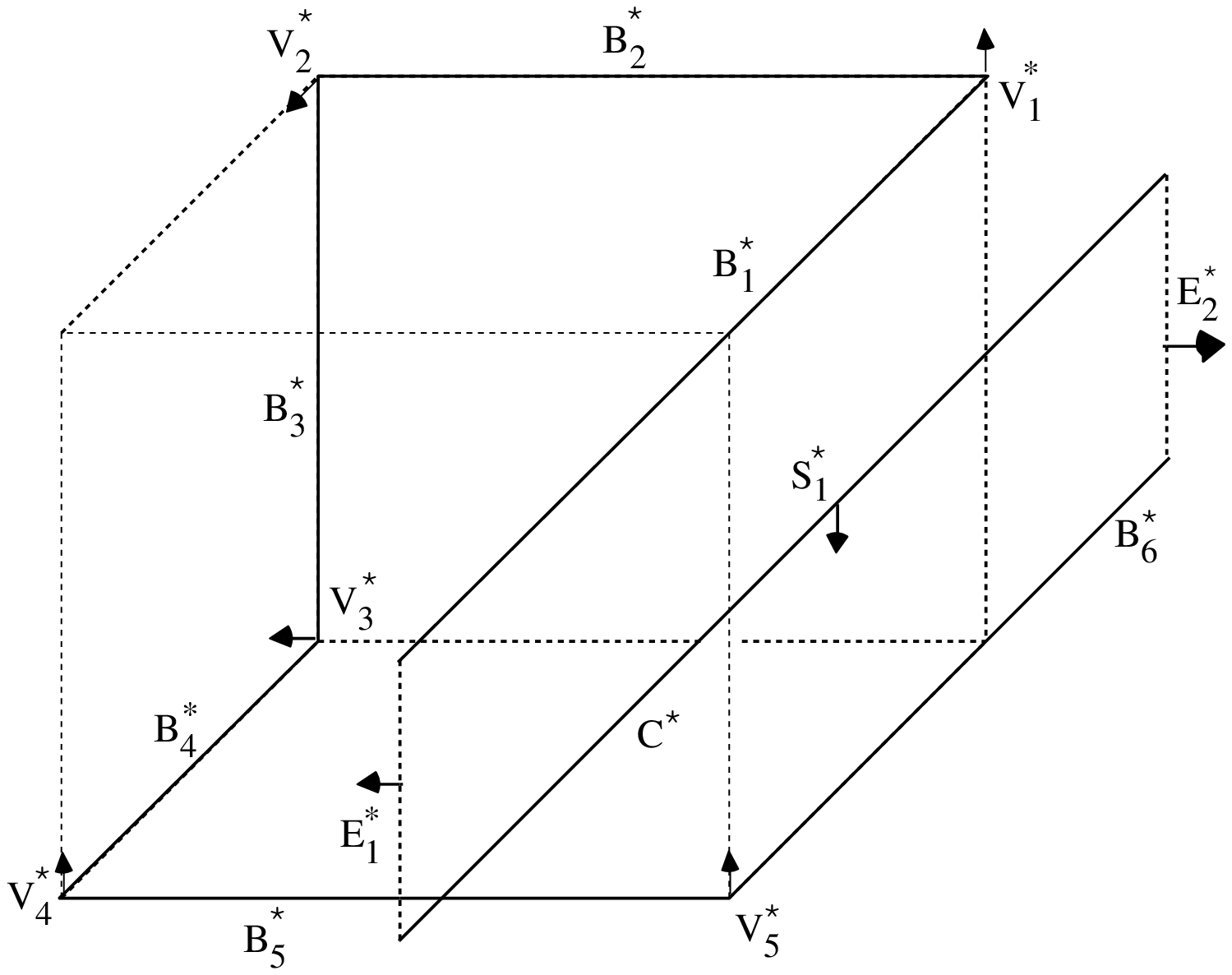} 
  \includegraphics[width=8.5cm]{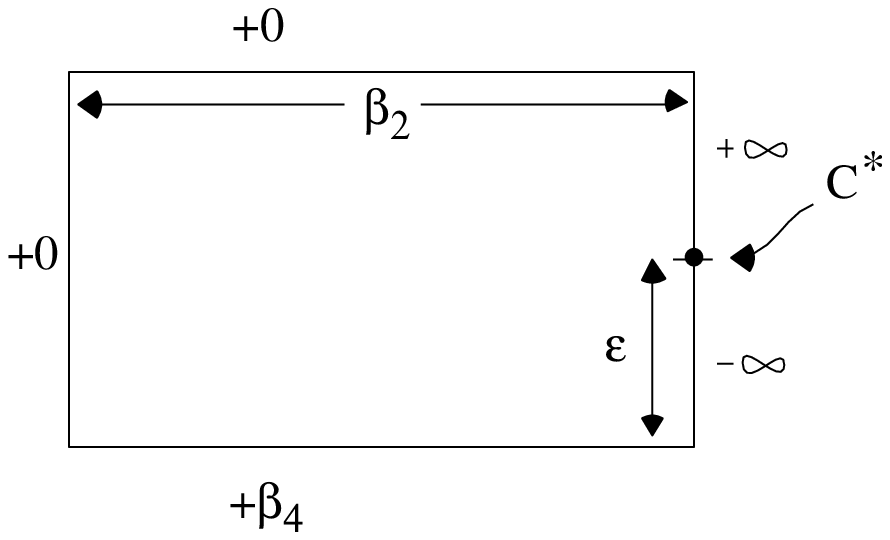} 
\end{center}
   \caption{Sketches of the boundaries of one eighth of
		$M_2^+$ and its conjugate (top row), and
		the graph dimensions and heights over the front 
		face of the bounding box for the 
		conjugate contour (bottom).}            \label{fig:2k+_sketches}
   \hfill
\end{figure}

 The sketch in Figure \ref{fig:2k+_sketches}, upper--left 
is of a contour suggestive 
of a `+' type handle in an even ended surface
which we will use to motivate the discussion.
 Taking its conjugate contour produces the contour 
in Figure \ref{fig:2k+_sketches}, upper--right, which is bounded 
by line segments as labelled in the figure.
 This contour bounds a Jenkins-Serrin graph over the 
front face of the box and hence supports a solution
to the Plateau problem.
 Let $\beta_j = \mbox{Length}(B_j) = \mbox{Length}(B_j^*)$ 
for $j=2,3,4,5$.
 The symmetries of $M_k^+$ imply there are $k$ periods, 
$k-1$ of these resulting from the ends, and one arising 
from the new handle.
 Lemma \ref{lem:mk} implies that if we restrict the 
conjugate contours so that the lengths of the segments 
over which the boundary contour is unbounded are equal, 
then $k-1$ of these periods are zero.
 Let $\epsilon = \beta_3/k$ be this common length.

 The remaining period
is shown to change sign as $\beta_4$ is varied so that the 
intermediate value theorem implies the following
\begin{theorem}
    For each $k>0$, there exists a one-parameter family
   $M_{k}^{+}$ of embedded, doubly--periodic  
   minimal surfaces of genus two with $4k$ Scherk type
   ends.
\end{theorem}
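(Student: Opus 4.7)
The plan is to first construct one eighth of $M_k^+$ as the \emph{conjugate} of a Jenkins--Serrin minimal graph, and then to assemble the full doubly-periodic surface by Schwarz reflection. Fix $k\ge 1$. The first step is to set up a Jenkins--Serrin problem on the rectangular front face $D$ depicted in Figure \ref{fig:2k+_sketches}, assigning $+\infty$ on the segments $A_i^*$, $-\infty$ on the segments $B_j^*$, and bounded (zero) data on the remaining arcs, exactly as suggested by the lower sketch in that figure. Checking the hypotheses of Theorem \ref{thm:Jen-Ser} reduces to a finite list of inequalities $2\alpha < \gamma$, $2\beta < \gamma$ over the admissible polygons $\cal P$; these can be ensured by an appropriate choice of the dimensions of $D$, and the verification is straightforward since all vertices lie on the boundary of a rectangle. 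Theorem \ref{thm:Jen-Ser} then provides a unique minimal graph $u$ over $\overline{D}$ realizing the prescribed boundary data.

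Taking the conjugate of the graph of $u$ produces a minimal surface whose straight boundary segments correspond, in a length- and orthogonality-preserving way, to planar geodesics of a candidate fundamental piece of $M_k^+$. Convexity of $D$ together with Krust's theorem (Theorem \ref{thm:krust}) implies that this conjugate piece is itself a graph, hence embedded. The resulting fundamental piece carries $k$ periods: $k-1$ associated to the Scherk-type ends, and one associated to the new `$+$' handle. By Lemma \ref{lem:mk}, imposing
\begin{displaymath}
   \epsilon = \mbox{length}(A_i) = \mbox{length}(B_j) = \beta_3/k
\end{displaymath}
for every $i, j$ simultaneously annihilates the $k-1$ end periods, leaving a single scalar residual period $P$ controlled by the handle.

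The core of the proof, and the step I expect to be the main obstacle, is to show that $P$, viewed as a continuous function of the free parameter $\beta_4$ (with the remaining dimensions of $D$ fixed), changes sign. My plan is to examine two degenerate limits of the conjugate contour, one in which $\beta_4$ is taken very small (the handle collapses towards the saddle) and one in which $\beta_4$ is taken very large (the handle stretches), and in each limit to identify the sign of $P$ by locating the distinguished pair of parallel planar symmetry arcs whose signed separation equals $P$. The delicate point is that the Plateau solution is only known abstractly from Theorem \ref{thm:Jen-Ser}, so the sign extraction must be performed via the uniqueness clause of that theorem and the maximum-principle comparison of Remark \ref{rmk:wayne}, applied to one-parameter families of Jenkins--Serrin graphs that sandwich the conjugate contour in each limit. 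Continuity of $P(\beta_4)$ and the resulting opposite signs produce, by the intermediate value theorem, a parameter $\beta_4^*$ with $P(\beta_4^*) = 0$.

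Once the period is solved, Schwarz reflection (Theorem \ref{thm:schwarz}) across the bounding planar geodesics and straight lines extends the fundamental piece into a complete, smooth, doubly-periodic minimal surface with genus two and $4k$ Scherk-type ends. Embeddedness of each fundamental copy from Krust, combined with the fact that successive reflected copies lie on opposite sides of the reflecting planes, yields global embeddedness by the maximum principle. Finally, varying one of the remaining free dimensions of $D$ (for instance, the length of the zero-height sides opposite to the subdivided edge) while re-solving $P = 0$ at each parameter value produces the claimed one-parameter family $M_k^+$.
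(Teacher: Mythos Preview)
Your proposal is correct and follows essentially the same route as the paper: Jenkins--Serrin existence for the conjugate piece, Krust for embeddedness, Lemma~\ref{lem:mk} to kill the $k-1$ end periods, and an intermediate value argument on $\beta_4$ for the remaining handle period, followed by Schwarz reflection and variation of $\beta_2$ for the family. The only point you leave implicit that the paper makes explicit is the identification of the two limits---as $\beta_4\to 0$ the surface degenerates to $M_k$, whose embeddedness gives one sign, while as $\beta_4\to\infty$ a barrier Jenkins--Serrin graph (with $+\infty$ along the $B_5^*$ edge) forces $B_5$ toward a straight segment of length $\beta_2$, which yields the opposite sign precisely when $\beta_2>\epsilon$; you should impose this inequality from the outset.
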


We give the argument only for the case $k=2$, as the argument is 
essentially identical for all $k$.  
 Choosing the curves $B_2^*$ and $B_3^*$ to lie 
at the zero level, the height of $B_5^*$ is $+\beta_4$,
with the end $E_1^*$ at height $+\infty$ and
$E_2^*$ at $-\infty$ as indicated in 
Figure \ref{fig:2k+_sketches}, bottom.

\begin{proof}
    All that remains to be shown is that as $\beta_4$ is 
   varied, the period 
	$\pi(\beta_4) = \mbox{Re} \int_{S_1}^{V_4} \phi_2$
   changes signs.
    Note that this period measures the distance between 
   the planes containing the curves $B_4$ and $C$.

    Let $\beta_2 > \epsilon$ and
   consider the two cases of $\beta_4$ large and $\beta_4$ small:

    a) Let $\beta_4 \to 0$. The limiting 
   surface is $M_2$ and the embeddedness of $M_2$ implies
   the point $V_4$ lies behind the symmetry
   plane of $C$; so $\pi(\beta_4) < 0$ for $\beta_4$ near zero.

    b) For large $\beta_4$, we claim the distance between the planes 
   containing $B_4$ and $B_6$ is $\beta_2 - \delta > \epsilon$, 
   since the \Gau\ map approaches a constant along $B_5$.
    To see this, use the barrier surface given as a Jenkins-Serrin
   graph over the back face of the box in Figure 
   \ref{fig:2k+_sketches}, right, with height $+\infty$ over the
   edge $B_5^*$ and the same heights as the contour for $M_2^+$
   over all other edges.  Arguments in \cite{jes1} imply the conjugate 
graphs converge to the barrier surface as $\beta_4 \to \infty$.  
    So in the limit, the behavior of the ends is the same and
   therefore $B_5$ approaches a straight line of length $\beta_2$
   which is greater than $\epsilon$.
    Hence $B_4$ lies in front of $C$ and $\pi(\beta_4) > 0$ for 
   $\beta_4$ large.

    Hence the period problem is solvable.
    Since $\beta_2$ is only bounded below this shows the period problem
   can be solved for each $\beta_2>\epsilon$ and therefore there exists a 
   one--parameter family of these surfaces.
Theorem \ref{thm:krust} implies each eighth of any one of these surfaces
   is embedded and, by Theorem \ref{thm:schwarz}, extends to an embedded 
   minimal surface.
\end{proof}

\noindent
{\bf \Weierstra\ data for $M_k^+$:} 
 Since $M_k^+$ is invariant under an order--two 
normal symmetry about
the $x_3$-axis, with six fixed points, the quotient is a sphere
minus $2k$ points.
 The meromorphic function $g^2$, where $g$ is the \Gau\
map, descends to the quotient.
 Taking $z$ to be the coordinate on this sphere, we normalize so that
$z(V_3) = 0$, $z(V_2) = \infty$ and $e_k = z(E_k) = 1$.
Define $v_i$ by $v_i = z(V_i)$ for $i=1,4,5$, 
$s_j = z(S_j)$, $j = 1, 2, \ldots, k-1$, where $\{S_j\}$ are
the vertical points lying on the planar geodesics between the
ends,
$e_m = z(E_m)$, $m = 1, \ldots, k-1$.  
 Conformality of $z$ orders these values 
$0<v_4<v_5<1<s_{k-1}< e_{k-1}<s_{k-2}<\ldots<s_1<e_1<v_1$.

 Comparison of the meromorphic functions $g^2$ and $z$ leads to 
the following \Weierstra\ data for $M_k^+$:
\begin{equation}
   \begin{array}{rcl}
      g^2  &=& \ds\frac{z+v_4}{z-v_4}\; \frac{z+v_5}{z-v_5} \;
		\frac{z+(-1)^k v_1}{z-(-1)^k v_1} \cdot
		f_k^2(z, s_1, \ldots, s_{k-1}), \\
	& & \\
      \eta &=& \ds \frac{dz}{{\cal E}_k(z,e_1, \ldots, e_k)} 
		\; D_k(z, s_1, \ldots, s_{k-1})
		\; N_k(z, s_1, \ldots, s_{k-1}),
   \end{array}						\label{eq:2k+_wd}
\end{equation}
where 
$f_k(z, s_1, \ldots, s_{k-1}) = N_k(z, s_1, \ldots, s_{k-1}) / 
D_k(z, s_1, \ldots, s_{k-1})$, with
\begin{displaymath}
   \begin{array}{rcl}
      N_k(z,s_1,\ldots,s_{k-1}) & := & 
		\ds \prod_{j=1}^{k-1} (z + (-1)^{k+j} s_j),\\
      D_k(z,s_1,\ldots,s_{k-1}) & := & 
		\ds \prod_{j=1}^{k-1} (z - (-1)^{k+j} s_j),\\
      {\cal E}_k(z,e_1,\ldots,e_k) &:=&
		\ds \prod_{m=1}^k (z^2 - e_m^2). 
   \end{array}
\end{displaymath}
 The conditions for embedded ends are
\begin{equation}
   g^2(1) = g^2(e_m) = 1					\label{eq:2k+_constr}
\end{equation}
for all $m\le k$.
 For $k=2$, we have the constraints
\begin{eqnarray*}
   A(1+v_4)(1+v_5) &=& \tilde A (1-v_4)(1-v_5) \\
   B(e_2+v_4)(e_2+v_5) &=& \tilde B (e_2-v_4)(e_2-v_5), 
\end{eqnarray*}
where $A = (1-s_1)^2(1+v_1)$, $\tilde A = (1+s_1)^2(1-v_1)$,
$B = (e_1-s_1)^2(e_1+v_1)$, and $\tilde B = (e_1+s_1)^2(e_1-v_1)$.
 From this, we can derive  the conditions
\begin{eqnarray}
   v_4 v_5   &=& {{ (A-\tilde A )(B+\tilde B )e_1 - 
	(A+\tilde A )(B-\tilde B )e_1^2}
	\over{ (A+\tilde A )(B-\tilde B ) - 
	(A-\tilde A )(B+\tilde B )e_1}},      \label{eq:2k+_c1} \\
   v_4 + v_5 &=& {{ (A-\tilde A )(B+\tilde B )(e_1^2-1)}
	\over{ (A+\tilde A )(B-\tilde B ) - 
	(A-\tilde A )(B+\tilde B )e_1}}.	\label{eq:2k+_c2}
\end{eqnarray}
And $v_4$ and $v_5$ are the zeros of a degree--two polynomial.

 With the \Weierstra\ data (\ref{eq:2k+_wd}) and the constraints
(\ref{eq:2k+_c1}) and (\ref{eq:2k+_c2}) 
we get the image in Figure \ref{fig:2k+_image} after choosing 
$k=2$ and determining
the correct values for $v_1$ and $e_1$.

\setcounter{equation}{0}
\section{The Examples $\bf M_k^-$}			\label{sec:m_k-}

 The periods associated to $M_k^-$ arise as residues
at the punctures for the ends or from integrating 
along a curve representing a non-trivial homotopy class.
 As in the case of $M_k$, the residues at the ends 
are made equal by equally distributing the straight
lines lieing between the ends of the conjugate of one-eighth of the 
fundamental piece.
 The portion of the period problem resulting from non-trivial
homotopy classes is one-dimensional due to the symmetries
of $M_k^-$, and use of the same barriers as in \cite{wei2}
shows that this period is also solvable.
 Hence 
\begin{theorem}
    There exists a one--parameter family of genus two,
   embedded minimal surfaces $M_k^-$ with $4k$ Scherk--type
   ends, for all $k \ge 1$.
\end{theorem}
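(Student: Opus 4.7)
The plan closely parallels the proof of Theorem 5.1 for $M_k^+$ in Section \ref{sec:m_k+}, adapted to the `$-$' type handle that Wei introduced in \cite{wei2}. First I would construct the conjugate contour of one eighth of $M_k^-$: by analogy with Wei's construction for $M_1^-$ (and with the $M_k^+$ case), this conjugate contour projects onto one face of a bounding box as a Jenkins--Serrin domain bounded by three finite edges and a fourth edge subdivided into $k$ segments with alternating $+\infty$ and $-\infty$ boundary data (for the ends), with the `$-$' type handle contributing additional finite-height segments on the boundary. Theorem \ref{thm:Jen-Ser} then produces a unique minimal graph bounded by this contour, giving a Plateau solution.

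Next I would dispose of the end periods. The symmetries of $M_k^-$ reduce the period problem to $k$ dimensions: $k-1$ periods correspond to residues at the end punctures, and one period arises from a non-trivial homotopy class on the compact Riemann surface introduced by the new handle. Lemma \ref{lem:mk} handles the first $k-1$ periods in one stroke: requiring $\epsilon = \mbox{length}(A_i) = \mbox{length}(B_j)$ to be common for all $i,j$ kills all end periods simultaneously, leaving only a one-dimensional handle period $\pi(\beta)$ as a function of a single parameter $\beta$ describing the height of the handle in the conjugate picture.

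To solve $\pi(\beta) = 0$ I would apply the intermediate value theorem via a barrier argument of the sort Wei used in \cite{wei2}. Concretely, I would build two Jenkins--Serrin barrier graphs over modified rectangular domains (obtained from the conjugate domain by forcing selected edges to $\pm\infty$) and use them to sandwich the actual conjugate graph in the spirit of Remark \ref{rmk:wayne}. In one extreme of $\beta$ the Plateau solution degenerates to a piece of $M_k$, whose embeddedness (Theorem \ref{thm:M_k}) forces $\pi$ to be negative; in the opposite extreme the barrier argument forces the relevant bounding planes to separate in the reverse sense, so $\pi$ is positive. Continuity of $\pi$ in $\beta$ then yields an interior zero.

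Finally, once the period problem is solved, Theorem \ref{thm:krust} applied to the resulting conjugate graph guarantees that each fundamental eighth is an embedded graph, and Theorem \ref{thm:schwarz} extends it to a complete, embedded, doubly--periodic minimal surface; varying the remaining free length parameter in the conjugate domain produces the one-parameter family. The main obstacle I expect is verifying that Wei's sandwich barriers continue to deliver opposite signs of $\pi$ uniformly in $k$. Since Lemma \ref{lem:mk} locks the ends into a rigid local geometry of common width $\epsilon$, the region near the handle should be essentially insensitive to $k$, but one must still check that the Jenkins--Serrin admissibility inequalities $2\alpha < \gamma$ and $2\beta < \gamma$ hold uniformly on both barrier domains as $k$ grows and as $\beta$ approaches its limiting values.
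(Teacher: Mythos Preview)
Your proposal is correct and follows essentially the same approach as the paper: the paper's proof is a terse sketch that (i) solves the end periods by equally spacing the infinite-valued segments on the conjugate contour (your appeal to Lemma~\ref{lem:mk}), (ii) observes that the remaining handle period is one-dimensional by symmetry, and (iii) invokes ``the same barriers as in \cite{wei2}'' to solve it---precisely the intermediate-value/barrier argument you outline. Your version is more detailed than the paper's (which says little more than ``use Wei's barriers''), and your closing caveat about checking the Jenkins--Serrin admissibility of the barriers uniformly in $k$ is a reasonable point the paper does not address explicitly.
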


\setcounter{equation}{0}
\section{The Examples $\bf M_{2k+1}^{--}$}		\label{sec:m_2k+1--}

\begin{figure}
\begin{center}
  \includegraphics[width=6.5cm]{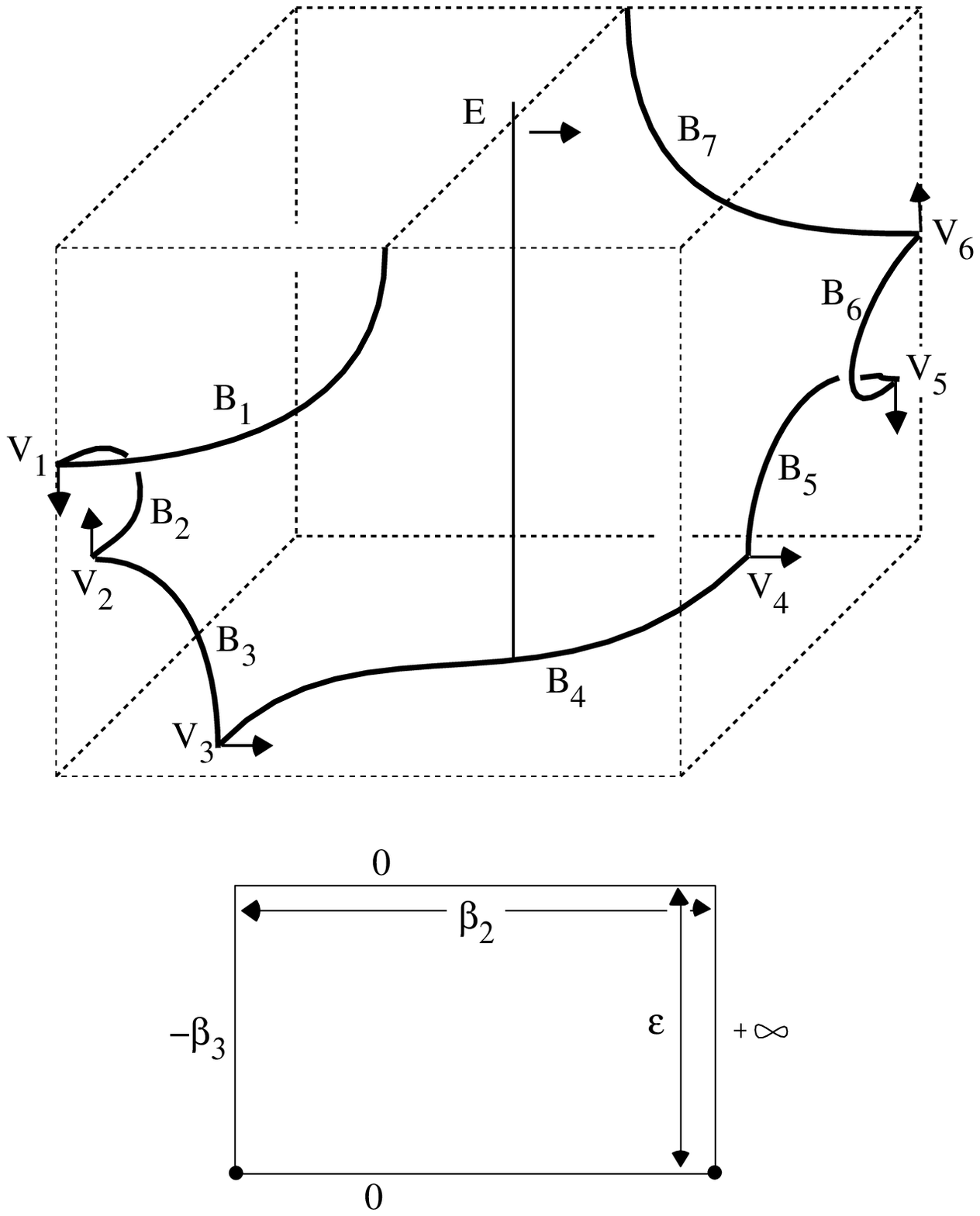} 
  \includegraphics[width=6.5cm]{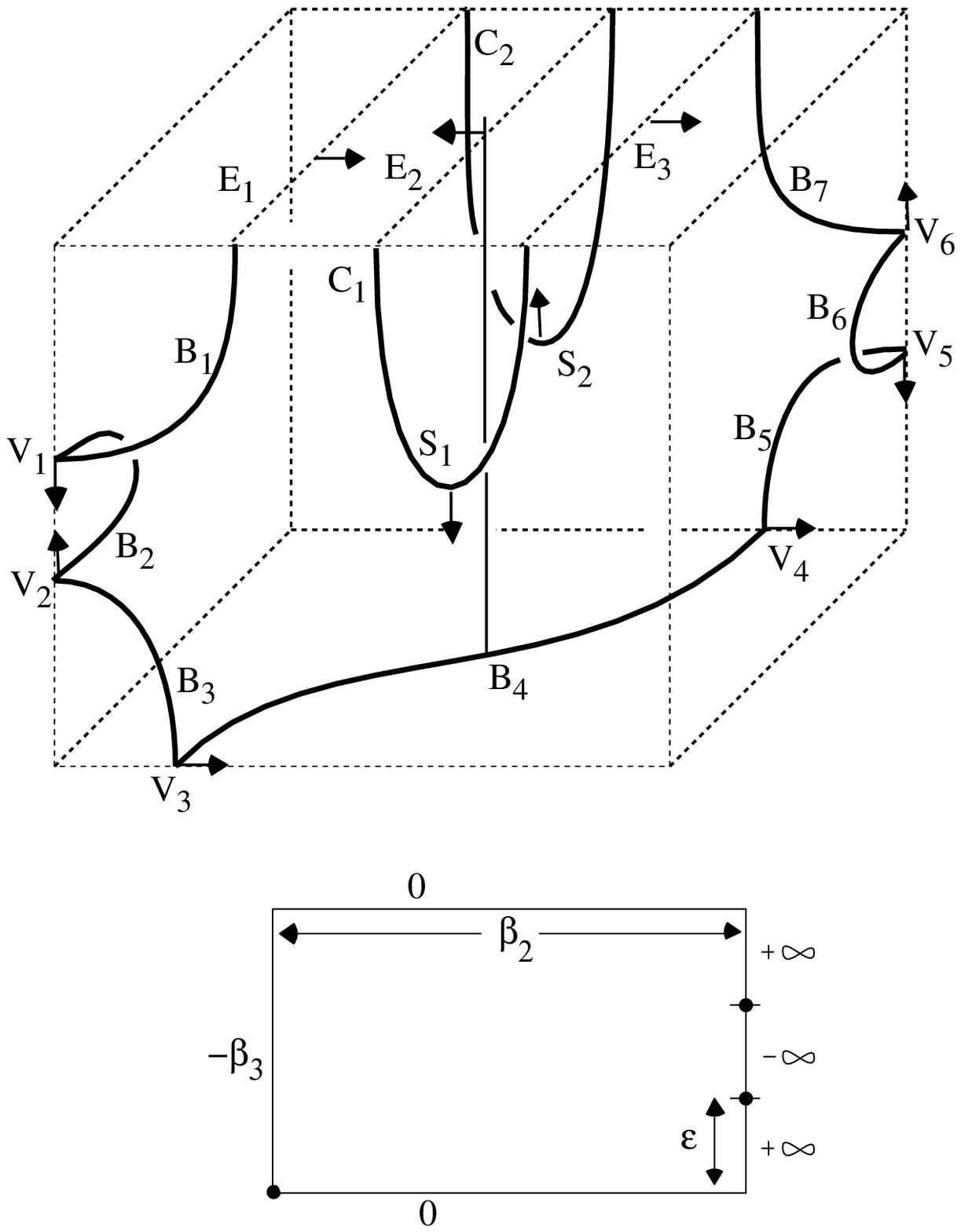} 
\end{center}
   \caption{Sketches of the boundary of one eighth of
	$M_1^{--}$ (upper left) and $M_3^{--}$ (upper right) with 
	the parameters for the conjugate boundary contour 
	viewed as a graph
	over the rectangular region drawn below 
	each sketch.}			\label{fig:2k+1mm_sketches}
\end{figure}

 In this section, we construct the embedded minimal surfaces
$M_{2k+1}^{--}$.  
 Specifically, we construct genus three surfaces having 
all the symmetries of Karcher's genus one surface $M_1$,
with two `--' handles and $4(2k+1)$ Scherk ends.

 F. Wei modified $M_1$ by introducing a single handle
over one of its saddle points.
 In the sketches of Figure \ref{fig:intro_sketches}, 
one can see that this results in a new vertical point
over $V_4$.
 In order to retain the vertical straight lines of $M_1$
on higher genus surfaces, one
is obliged to add a handle over the other 
saddle point, since, by Theorem \ref{thm:schwarz}, 
$180^o$ rotation about these
straight lines are isometries of the surface.
 Such a surface might have a boundary like that 
sketched in Figure \ref{fig:2k+1mm_sketches}, left.
 If this surface did exist, it's conjugate contour 
would be as in Figure 
\ref{fig:2k+1mm_sketches}, lower left.  
This conjugate contour meets all the conditions of 
Theorem \ref{thm:Jen-Ser}, hence it supports a solution to the 
Plateau Problem, and the original surface conjugate to this 
solution is a minimal surface bounded by planar curves 
with the desired symmetries.

 Although the conjugate surface is 
a minimal surface bounded by planar curves,
it is not guaranteed that 
reflection in these planes produces an embedded 
doubly--periodic surface.
 In particular, using the notation of Figure
\ref{fig:2k+1mm_sketches}, one does not know
if the curves $B_1$ and $B_3$ lie in the same plane.
 This brings us to the period problem; 
one must insure that the planes containing $B_1$ and $B_3$ coincide.
 Since we have assumed the surface contains a 
vertical straight line,
knowing $B_1$ and $B_3$ lie in 
the same plane implies the planes containing
$B_5$ and $B_7$ also coincide.
 Should this period problem be solvable, the surface
in our notation would be denoted by $M_1^{--}$.

 In Theorem \ref{thm:2k+1mm}.2 we prove, by analyzing
the Plateau solutions for the countour of Figure 
\ref{fig:2k+1mm_sketches}, lower left, that this period 
problem can never be solved. 
 In contrast, by having
more ends on the surface, as in Figure 
\ref{fig:2k+1mm_sketches}, right, we prove in Theorem
\ref{thm:2k+1mm}.1 that the obstruction to solving 
this period problem is removed.
 These new surfaces are the surfaces $M_{2k+1}^{--}$ 
in our notation.

\begin{theorem}					\label{thm:2k+1mm} 
   \setcounter{i}{1}
   \begin{list}{\em (\arabic{i})}{\usecounter{i}}
	\item For each $k\ge1$, there exists a 
		one--parameter family 
		of embedded, doubly--periodic minimal 
		surfaces $M_{2k+1}^{--}$ of genus 
		three;

	\item $M_1^{--}$ does {\bf not} exist.
   \end{list}
\end{theorem}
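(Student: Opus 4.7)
The plan is to follow the template established in Section \ref{sec:m_k+}. For both parts I apply Theorem \ref{thm:Jen-Ser} to the rectangular domain pictured in the lower sketches of Figure \ref{fig:2k+1mm_sketches}; the stated $\pm\infty$ boundary heights satisfy the $2\alpha<\gamma$, $2\beta<\gamma$ inequalities, so a unique Jenkins-Serrin graph exists, and its conjugate is a minimal surface bounded by the planar curves with the symmetries of Figure \ref{fig:2k+1mm_sketches}. By Lemma \ref{lem:mk}, fixing a common length $\epsilon$ for all $A_i$ and $B_j$ kills every end period simultaneously, leaving a single one-dimensional handle period $\pi$ which measures the signed distance between the parallel planes containing $B_1$ and $B_3$. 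Part (1) amounts to showing $\pi$ changes sign as a parameter varies, and part (2) amounts to showing it does not.

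For part (1), holding $\epsilon$ and the width $\beta_2$ of the rectangle fixed, I vary the height parameter $\beta_4$ associated to the intermediate horizontal edge on the conjugate contour, exactly as in the proof for $M_k^+$. As $\beta_4 \to 0$ the handle degenerates and the conjugate graph converges to the Jenkins-Serrin graph for $M_{2k+1}$; embeddedness of $M_{2k+1}$ (Theorem \ref{thm:M_k}) places the relevant vertex behind the symmetry plane of $B_3$, so $\pi(\beta_4) < 0$. As $\beta_4 \to \infty$ I pass to the barrier obtained by replacing that edge with $+\infty$ data on the same rectangle; by the comparison principle of Remark \ref{rmk:wayne}, the original graphs converge monotonically to the barrier, so the Gauss map along the intermediate straight line approaches a constant and the length of $B_5$ approaches $\beta_2 - \delta$. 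For $k\ge 1$ there is enough room in the rectangle to take $\beta_2 - \delta > \epsilon$, giving $\pi(\beta_4) > 0$ for $\beta_4$ large. The intermediate value theorem closes the period, varying $\beta_2 > \epsilon$ produces the one-parameter family, and Theorem \ref{thm:krust} together with Theorem \ref{thm:schwarz} and the maximum principle yield embeddedness.

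For part (2) the decisive point is that the domain for $M_1^{--}$ has only a single end slot, forcing $\beta_2 = \epsilon$ on the nose with no remaining slack. The $\beta_4 \to 0$ limit still gives $\pi < 0$, but the barrier in the $\beta_4 \to \infty$ limit collapses: replacing the intermediate edge with $+\infty$ data in a rectangle of width exactly $\epsilon$ produces a Jenkins-Serrin configuration that saturates the inequality $2\alpha < \gamma$ rather than satisfying it strictly, so the limiting length of $B_5$ is at best $\epsilon$, not larger. I will upgrade this to a uniform bound by using Remark \ref{rmk:wayne} as a monotone comparison principle against the saturating barrier: for every finite $\beta_4 > 0$ the Jenkins-Serrin graph for $M_1^{--}$ is strictly dominated by the barrier graph on the intermediate edge, which forces the corresponding conjugate segment $B_5$ to be strictly shorter than the critical value at which $\pi$ could vanish. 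Hence $\pi(\beta_4) < 0$ for all admissible $\beta_4$, and $M_1^{--}$ cannot exist.

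The main obstacle is part (2): unlike part (1) where the two limit signs are strict, the $\beta_4 \to \infty$ limit for $M_1^{--}$ is exactly borderline, so the nonexistence cannot be seen from the endpoints alone. I expect the real work to be the uniform monotone comparison that rules out $\pi = 0$ for every finite $\beta_4$, which is the precise translation of the geometric obstruction informally described in Section \ref{sec:m_2k+1--}.
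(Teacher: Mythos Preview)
Your outline for part (1) has the right architecture (intermediate value theorem between two degenerate limits), but you have transplanted the $M_k^+$ argument of Section~\ref{sec:m_k+} without adjusting to the different geometry of $M_{2k+1}^{--}$. In the conjugate contour of Figure~\ref{fig:2k+1mm_sketches}, the length $\beta_4$ is \emph{not} a free height parameter: it is the length of the side carrying the end slots, so $\beta_4 = (2k+1)\epsilon$ is fixed once $k$ and $\epsilon$ are. The free parameter is $\beta_3$, the depth of $B_4^*$ below the zero level. Consequently your large-parameter claim ``length of $B_5$ approaches $\beta_2-\delta$'' is the wrong statement here; what actually happens as $\beta_3\to\infty$ is that the Gauss map becomes constant along $B_4^*$, so the $x_2$-displacement along $B_4$ tends to its arclength $(2k+1)\epsilon$, which exceeds the end width $\epsilon$ precisely when $k\ge 1$. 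That is the source of the positive sign, and it explains transparently why $k=0$ is borderline.

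For part (2) there is a genuine gap. First, your saturation claim is false: for $M_1^{--}$ the barrier obtained by sending $B_4^*$ to $-\infty$ is a Jenkins--Serrin graph over a $\beta_2\times\epsilon$ rectangle with data $0,-\infty,0,+\infty$, and the inequalities $2\alpha<\gamma$, $2\beta<\gamma$ read $2\epsilon<2\beta_2+2\epsilon$, which holds strictly for every $\beta_2>0$. So the barrier exists and nothing collapses. Second, and more importantly, the nonexistence does not require any limiting or comparison argument at all. The paper's proof is a one-line geometric observation: on the original surface, $B_4$ is a planar geodesic of length $\beta_4=\epsilon$ which is \emph{not} a straight line (the surface is not flat), hence the distance between the parallel planes containing its endpoints $V_3\in B_3$ and $V_4\in B_5$ is strictly less than $\epsilon$. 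But the end forces the planes of $B_1$ and $B_7$ to be exactly $\epsilon$ apart, and the vertical-line symmetry makes the configuration symmetric about the midplane. Therefore $B_3$ lies strictly between the planes of $B_1$ and $B_7$ for every value of $\beta_3$, and the period never vanishes. Your proposed monotone comparison via Remark~\ref{rmk:wayne} is both unnecessary and not directly applicable (the edge carrying $B_4^*$ changes type between $C_k$ and $B_j$ when you pass to the barrier), whereas the arclength-versus-chord inequality settles the matter immediately.
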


\begin{proof} 
    Let $\beta_j = \mbox{Length}(B_j) = \mbox{Length}(B_j^*)$ 
   for $j=2,3,4,5,6$.
    By Lemma \ref{lem:mk}, all periods arising
   from the addition of ends are zero provided the 
   lengths of the segments over which the conjugate contours 
   are unbounded are equal.
    We assume this condition, and 
   let $\epsilon$ be this common length, which remains fixed
   throughout the proof.
    Hence we need only address the periods arising from 
   non-trivial homotopy classes, i.e. from the addition of
   new handles.
    Note that from the conjugate contour one sees that 
   for each $M_{2k+1}^{--}$, $\beta_4 = (2k+1)\epsilon$.

    {\bf Proof of (2):}
    We proceed by contradiction.  
    Suppose $M_1^{--}$ does exist.
    Let $S$ be one eighth of $M_{1}^{--}$.  
    Figure \ref{fig:2k+1mm_sketches}, left shows a sketch of $S$. 
    We are assuming that there is a vertical 
   straight line on $S$ passing
   through the end $E$, orthogonal to the plane containing $B_4$.
    Rotation about this line interchanges $V_1$ and $V_6$, and 
interchanges $V_2$ and $V_5$.  

   \begin{remark}
       The boundary contour of $S^*$ is a graph
      over a rectangle as drawn in Figure 
      \ref{fig:2k+1mm_sketches}, lower left, and as a result
      of the symmetries, $B_2^*$ and $B_6^*$ lie
      at the same height.
       Choosing this to be the zero height implies the
      line $B_4^*$ has height $-\infty < -\beta_3 < 0$, 
      and the end $E$ has height $+\infty$.
       From Theorem \ref{thm:Jen-Ser}, we get a 
      minimal graph with this boundary.
       As a graph, it is embedded and 
      Theorem \ref{thm:krust} assures that $S$ is embedded.
       Hence there exists a Plateau solution 
      $S^*$ with the desired boundary and symmetries. 
   \end{remark}
   \noindent 
   {\bf Claim:}{ \em
       The distance between the planes containing 
      $B_3$ and $B_5$ is always shorter
      than the distance between the planes containing
      $B_1$ and $B_7$.
       Hence the period is always of the same sign.
   }

    The planar geodesic $B_4$ has length $\epsilon$ and is 
   not a straight line.  
    Therefore the distance between the symmetry
   planes containing $B_3$ and $B_5$ is strictly less than 
   $\beta_4 = \epsilon$, 
   and the curve $B_3$ always lies to one side 
   of the plane containing $B_1$.
    This establishes the claim and completes the 
   proof of (2).

 In summary, the period problem on $M_1^{--}$ 
is unsolvable because the distance $\epsilon$ between 
the planar curves bounding the end is equal to
$\beta_4$ and the planar curve $B_4$ is not straight.
 If one could modify the conjugate contour so that 
$\beta_4 > \epsilon$,
then the period problem may be solvable.
 One way of achieving this is to add more ends to the
conjugate contour as in the sketch in Figure 
\ref{fig:2k+1mm_sketches}, lower right.  
 Because we wish to maintain the vertical straight lines,
the contour bounded by straight lines must have a 
horizontal planar symmetry.
 Therefore we must add an even number of extra ends.
 Figure \ref{fig:2k+1mm_sketches}, right is a sketch of 
such a surface with three ends.
 The conjugate contour for this surface 
is again a Jenkins-Serrin graph over a rectangle as drawn
in Figure \ref{fig:2k+1mm_sketches}, lower right.

{\bf Proof of 1):}
    Assume $\beta_2 > \beta_4 = (2k+1) \epsilon$.  
     Since we have assumed the existence of a 
   vertical straight line on the 
   surface passing thru $E_{k+1}$ and orthogonal to $B_4$, 
   we have only one period arising from a non-trivial 
   homotopy class.
    For this period, we must show that $B_1$ lies in the plane
   containing $B_3$.  We use the intermediate 
   value theorem to show the existence of a value 
   for $\beta_3$ such that this period is zero.
    Specifically we have two cases:

    a) As $\beta_3 \goesto 0$, $M_{2k+1}^{--}$ degenerates to 
   $M_{2k+1}$. 
    By the embeddedness of $M_{2k+1}$, we have the point
   $V_2$ moves behind the plane containing  $B_1$,
   and the period is negative.

    b) As $\beta_3 \to \infty$, the curve $B_4^*$ moves away
   toward height $-\infty$.  
    Let $\cal{B}$ be the Jenkins--Serrin graph over the 
   rectangle as described in Figure 
   \ref{fig:2k+1mm_sketches}, lower right, 
   with boundary heights $0, -\infty, 0, +\infty, -\infty, +\infty$
   with zero heights corresponding to the edges containing
   the curves $B_2^*$ and $B_6^*$.  This graph $\cal B$ exists, since 
$\beta_2 > \beta_4$, by Theorem \ref{thm:Jen-Ser}.  
By the arguments in \cite{jes1}, the conjugate graphs converge to 
$\cal B$ as $\beta_3 \to \infty$.  
    Therefore along $B_4^*$ the \Gau\ map 
   approaches a constant value, and the 
   displacement along $B_4$ in the desired
   direction approaches $(2k+1) \epsilon = \beta_4$. 
    Hence $V_2$ lies in front of the plane containing $B_1$ for 
large $\beta_3$, and the period is positive.  

    By the intermediate value theorem, there exists a
   value of $\beta_3$ at which the period is zero.
    Therefore the period problem is solvable on 
   $M_{2k+1}^{--}$.

Theorems \ref{thm:Jen-Ser} and \ref{thm:krust} imply that the one 
eighth portion $S$ of $M_{2k+1}^{--}$ is embedded.  Applying the 
classical maximum principle and the maximum principle at infinity
\cite{mr1}, one easily determines that the full fundamental piece of 
$M_{2k+1}^{--}$ lies inside the box given by its boundary curves.
    Reflections through the faces of this box 
   produces an embedded surface.
    Therefore $M_{2k+1}^{--}$ is embedded.

    $\beta_2$ has not been used in 
   this argument ($\beta_2$ is any fixed number greater than 
$\beta_4$), and therefore we have a one-parameter 
   family of $M_{2k+1}^{--}$ for each $k \geq 1$.
\end{proof}

\vspace{0.1in}
\noindent
{\bf \Weierstra\ data for $M_3^{--}$:}
 Since $M_{3}^{--}$ is invariant under an order--two normal
symmetry about the $x_3$-axis, with eight fixed points, the
quotient surface is a sphere.
 The meromorphic function
$g^2$, where $g$ is the stereographic projection of
the \Gau\ map, descends to the quotient.
 Taking $z$ to be the coordinate on the sphere, we normalize 
so that $z(V_3) = \infty$, $z(V_4) = 0$, and $z(E_2) = 1$.
 With this normalization, rotation about the vertical 
straight line on $M_3^{--}$ corresponds
to inversion through the unit circle.
 Define $e_1 = z(E_1), v_j = z(V_j)$, for $j=1,2$, 
and $s_1 = z(S_1)$.
 Then $z(E_3) = 1/e_1, z(V_5) = 1/v_2, z(V_6) = 1/v_1$,
and $z(S_2) = 1/s_1$.
 Comparison of the meromorphic functions $g^2$ and $z$ leads to 
the following \Weierstra\ data for $M_3^{--}$:
\begin{equation}
   \begin{array}{rcl}
      g^2 &=& \ds \frac{z-v_1}{z+v_1} \ \ds \frac{z+1/v_1}{z-1/v_1} \
		\ds \frac{z+v_2}{z-v_2} \ \ds \frac{z-1/v_2}{z+1/v_2} \
		\left( \ds \frac{z-s_1}{z+s_1} \right)^2
		\ \left( \ds \frac{z+1/s_1}{z-1/s_1} \right)^2, \\
	& & \\
      \eta &=& \ds \frac{dz}{z^2-1} \ \ds \frac{z^2-s_1^2}{z^2 - e_1^2} \ 
	\ds \frac{z^2 - 1/s_1^2}{z^2 - 1/e_1^2}.
   \end{array}							\label{eq:2k+1mm_wd}
\end{equation}
 This \Weierstra\ data insures each Scherk-type end is
itself an embedded end, but one must also guarantee that the 
limit normals on the ends are antipodal so the ends do not
cross each other as they diverge.
 Because of our choice of orientation, this is equivalent to
the conditions
\begin{displaymath}
   g^2(1) = g^2(e_1) = g^2(1/e_1) = 1.
\end{displaymath}
 Due to the rotational symmetry, 
the second and third conditions result in the same constraints,
while the first is automatically satisfied.
 The second condition places the following constraint on $e_1$:
\begin{eqnarray}					\label{eq:2k+1mm_b}
   (\delta + \gamma + 2\nu) e_1^6 
	& + & \left[ (\delta+\gamma)(\nu^2-1) + 2(\delta\gamma-2)\nu 
		- (\delta+\gamma) - 2\nu \right] e_1^4 \nonumber \\
	& + & \left[ 2\nu - (\delta+\gamma)(\nu^2-2)-2\nu(\delta\gamma-2)
		+ (\delta+\gamma) \right] e_1^2 \\
	& - & 2\nu - (\delta+\gamma) = 0, \nonumber
\end{eqnarray}
where $\delta = v_2 - 1/v_2$, $\gamma = 1/v_1 - v_1$, 
and $\nu = 1/s_1 - s_1$.

 By Theorem \ref{thm:2k+1mm}, there exists a solution to 
(\ref{eq:2k+1mm_b}) in the necessary range.
 Using the computer to find this solution and to calculate the
values of the two periods of the \Weierstra\ data, we determine
the appropriate values for $e_1$, $v_1$, and $v_2$, given a value for 
$s_1$.  We thereby generate the image of $M_3^{--}$ 
in Figure \ref{fig:2k+1pp_image}.

\setcounter{equation}{0}
\section{The Examples $\bf M_{2k+1}^{++}$}		\label{sec:m_2k+1++}

\begin{figure}
\begin{center}
  \includegraphics[width=10.5cm]{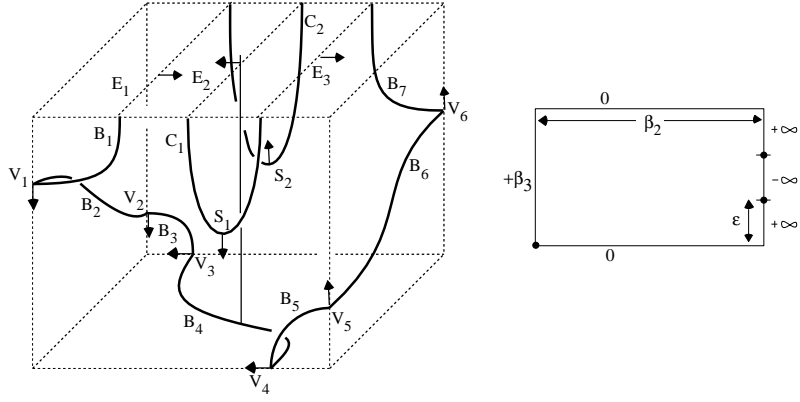} 
\end{center}
   \caption{Sketches of the boundary of one eighth of $M_3^{++}$ 
	and it's conjugate boundary graph heights over the front
	face of the bounding box.}               \label{fig:2k+1pp_sketch3}
\end{figure}

 As in the previous section, one might investigate whether it
is possible to construct genus three examples by adding two 
`+' type handles to $M_1$ while preserving the symmetries.
 The same methods as those used in the `--' case 
can be used to show
the existence of a minimal disc with the desired boundary
and symmetries, but one must again consider the 
period problem.
 The similarities between the conjugate contours for the
two `--' handles and two `+' handles allow one to 
observe a similar natural obstruction to solving
the period problem for the one-ended surfaces.
 By adding more ends to these surfaces, as in the
previous section,
this obstruction is overcome.
 Denoting these new surfaces by $M_{2k+1}^{++}$
and using arguments similar to those used in 
the proof of Theorem \ref{thm:2k+1mm}, one has: 
\begin{theorem}
   \setcounter{i}{1}
   \begin{list}{\em \arabic{i})}{\usecounter{i}}
      \item There exists a one--parameter family of embedded, 
		doubly--periodic minimal surfaces 
		$M_{2k+1}^{++}$ of genus three, for each $k\ge 1$.

      \item $M_1^{++}$ does {\bf not} exist.
   \end{list}
\end{theorem}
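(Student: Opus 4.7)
The plan is to mirror the proof strategy of Theorem \ref{thm:2k+1mm}, adapted to the `++' configuration. First I would draw, for each candidate surface $M_{2k+1}^{++}$, a sketch of one eighth together with its suggested conjugate contour (analogous to the right half of Figure \ref{fig:2k+1pp_sketch3}). As in Section \ref{sec:m_k+}, the conjugate contour is bounded by horizontal and vertical segments whose heights over the front face of the bounding box describe a Jenkins--Serrin graph: three zero-height edges, and one edge subdivided into unbounded segments alternating between $+\infty$ and $-\infty$, with an intermediate finite-height step recording the `+' handle. Theorem \ref{thm:Jen-Ser} then furnishes a Plateau solution, and Theorem \ref{thm:krust} shows the original fundamental piece is a graph, hence embedded. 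Lemma \ref{lem:mk} applied as in Section \ref{sec:m_k+} reduces the $(2k+1)$-dimensional period problem arising from the ends to a single remaining period, after setting a common length $\epsilon$ for all unbounded boundary segments. Under this normalization, the length of the analogue of $B_4$ is forced to equal $(2k+1)\epsilon$.

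For the non-existence of $M_1^{++}$, I would run the same geometric obstruction argument used for $M_1^{--}$. When $k=0$, the analogue of $B_4$ is a single planar geodesic of length exactly $\epsilon$, which is not a straight line, so the distance between the two symmetry planes that must coincide for the period to vanish is strictly less than $\epsilon$, while the curves that would have to meet are separated by precisely $\epsilon$. Hence the relevant period is always of one fixed sign, and $M_1^{++}$ cannot close up. The only step requiring care is checking that the `+' handle's geometry indeed forces the same inequality (the relevant planar curve must be non-straight because the Gauss map is non-constant along it, which follows from the uniqueness in Theorem \ref{thm:Jen-Ser}).

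For the existence of $M_{2k+1}^{++}$ with $k \geq 1$, I would fix $\beta_2 > \beta_4 = (2k+1)\epsilon$ and use the intermediate value theorem on the remaining period $\pi(\beta_3)$ as a function of the height parameter $\beta_3$ associated with the `+' handle, examining the two limits. As $\beta_3 \to 0$, the surface degenerates to $M_{2k+1}$ from Theorem \ref{thm:M_k}, whose embeddedness forces the period to have one sign. As $\beta_3 \to \infty$, I would construct a Jenkins--Serrin barrier over the back face of the bounding rectangle (sending the appropriate edge to $\pm\infty$ while keeping the other heights fixed) and invoke the convergence argument from \cite{jes1} to show the Gauss map becomes constant along the relevant edge, so that in the limit the displacement along the analogue of $B_4$ approaches $(2k+1)\epsilon = \beta_4 > \epsilon$, reversing the sign of the period. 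The intermediate value theorem then yields a zero of $\pi$.

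The main obstacle, as in the `$--$' case, is the barrier construction controlling the $\beta_3 \to \infty$ limit: one must verify that the prescribed boundary heights in the barrier satisfy the Jenkins--Serrin inequalities $2\alpha < \gamma$ and $2\beta < \gamma$ of Theorem \ref{thm:Jen-Ser} for every admissible polygon $\mathcal{P}$, which is precisely where the hypothesis $\beta_2 > (2k+1)\epsilon$ enters and where the hypothesis fails for $k=0$. Once the period is solved, embeddedness of the full surface follows from Theorem \ref{thm:krust}, the Schwarz reflection principle (Theorem \ref{thm:schwarz}), and the maximum principle at infinity \cite{mr1}, exactly as in the last paragraph of the proof of Theorem \ref{thm:2k+1mm}. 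Varying $\beta_2$ over $(\beta_4, \infty)$ produces the one-parameter family.
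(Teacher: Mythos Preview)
Your proposal is correct and follows exactly the approach the paper indicates: the paper itself gives no detailed proof of this theorem, stating only that one uses ``arguments similar to those used in the proof of Theorem~\ref{thm:2k+1mm}'', and your outline is a faithful adaptation of that proof to the `$++$' configuration (with the conjugate contour now a graph over the front face of the bounding box, as in Section~\ref{sec:m_k+} and Figure~\ref{fig:2k+1pp_sketch3}).

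One small imprecision in your final paragraph: you attribute the failure of the existence argument for $k=0$ to the Jenkins--Serrin barrier hypothesis $\beta_2 > (2k+1)\epsilon$ breaking down. For $k=0$ this reads $\beta_2 > \epsilon$, which can certainly be arranged, and the barrier graph exists. The actual reason the intermediate-value argument does not close for $k=0$ is that the limiting displacement along the analogue of $B_4$ as the handle parameter tends to infinity is only $(2k+1)\epsilon = \epsilon$, merely equal to (never strictly greater than) the end width $\epsilon$, so the period never changes sign. You already gave the correct direct non-existence argument earlier (the planar geodesic of length $\epsilon$ is not straight), so this does not affect the validity of your proof.
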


{\bf Note:} The symmetry groups for $M_1$, $M_{2k+1}^{--}$,
and $M_{2k+1}^{++}$ are identical.
 Hence one has two collections of genus three minimal surfaces
with the same symmetries as Karcher's original genus one surface 
$M_1$.  

\noindent
{\bf \Weierstra\ data for $M_3^{++}$:}
 Using the same notation as that used for the surface $M_3^{--}$,
we can determine the \Weierstra\ data for $M_3^{++}$;
the results are as follows: 
\begin{eqnarray*}
   g^2 &=& 	\frac{z-v_1}{z+v_1} \ \frac{z+1/v_1}{z-1/v_1} \
	 \frac{z-v_2}{z+v_2} \ \frac{z+1/v_1}{z-1/v_2} \
	\left( \frac{z-s_1}{z+s_1} \right)^2
	\ \left( \frac{z+1/s_1}{z-1/s_1} \right)^2, \\
	& & \\
   \eta &=& \frac{dz}{z^2-1} \ \frac{z^2-s_1^2}{z^2 - e_1^2} \ 
	\frac{z^2 - 1/s_1^2}{z^2 - 1/e_1^2}.
\end{eqnarray*}
 With the same constraints for parallel ends as in 
(\ref{eq:2k+1mm_b}) and by changing $\gamma$ to 
$v_1 - 1/v_1$ we compute the parameters used in 
generating the image in Figure \ref{fig:2k+1pp_image}.

\setcounter{equation}{0}
\section{The Examples $\bf M_{k}^{+-}$}			\label{sec:m_k+-}

 In this section, we consider the genus three 
surfaces $M_{k}^{+-}$ which arise by adding both a 
'$+$' handle and a '$-$' handle to $M_k$.
 As in the case of $M_k^-$ and $M_k^+$, the handles make
it impossible to preserve the straight line symmetries
of $M_1$, but the three mutually perpendicular
planar reflectional symmetries are preserved.
 These symmetries reduce the number of periods 
that need to be addressed in order for the period problem to
be solved.
 In particular, $M_{k}^{+-}$ has $k+1$ periods:  $k-1$ 
of these periods arise from the residues of the Weierstrass 
data at the ends; leaving only two periods resulting from 
non-trivial homotopy classes.
 By Lemma \ref{lem:mk}, the periods resulting
from the additional ends are simultaneously zero 
provided the segments over which the conjugate contours
are unbounded are equal in length.
 As we have done in the previous sections, we fix $\epsilon$
to be this common length.
 Now we need only consider the two periods that result 
from non-trivial homotopy classes.

 Figure \ref{fig:mkpm_sketches} contains sketches of the
boundary of one eighth of $M_2^{+-}$ (left) and $M_3^{+-}$
(right), together with the conjugate contour heights 
written as a graph, where
$\beta_j = \mbox{Length}(B_j)$ for $j=2,3,4,5,6$.
 We assume $\beta_2 > \epsilon$ on all contours.

We now consider the case $k=1$.  
 By consideration of the two periods for 
$M_{1}^{+-}$ for varying values of $\beta_3$ and $\beta_5$, 
we are able to use a two--dimensional degree argument to
prove: 
\begin{theorem}					\label{thm:mkpm}
    There exists a one-parameter family of genus three,
   embedded minimal surfaces $M_{1}^{+-}$ with $4$ Scherk-type
   ends.
\end{theorem}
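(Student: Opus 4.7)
The plan is to reduce the existence of $M_1^{+-}$ to a two--dimensional zero--finding problem in the plane of parameters $(\beta_3,\beta_5)$ and then solve that problem by a mapping degree / Poincar\'e--Miranda type argument, in exact analogy with how the one--dimensional cases in Sections \ref{sec:m_k+} and \ref{sec:m_2k+1--} were dispatched by the intermediate value theorem. Fix $\beta_2 > \epsilon$. For each $(\beta_3,\beta_5)$ in some suitable range, the suggested conjugate boundary sits as a graph over a rectangular domain with the top edge subdivided into segments of heights $\pm\infty$ (the ends) whose lengths I set all equal to $\epsilon$, and with the remaining heights determined by $\beta_3$ and $\beta_5$. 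Theorem \ref{thm:Jen-Ser} gives a unique minimal graph bounded by this contour; by Lemma \ref{lem:mk} the residue periods at the ends vanish automatically. Conjugation and Theorem \ref{thm:krust} produce a minimal surface bounded by planar geodesics, leaving only the two periods $\pi^{+}(\beta_3,\beta_5)$ and $\pi^{-}(\beta_3,\beta_5)$ coming from the handles.

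Next I would show that $\pi^{+}$ and $\pi^{-}$ extend continuously in $(\beta_3,\beta_5)$ and analyze their signs on the boundary of a rectangle $R = [\delta,T]\times[\delta,T]$ in parameter space, for $\delta$ small and $T$ large. On each of the four edges the analysis reduces to a one--dimensional argument already used in the paper. On $\{\beta_5=\delta\}$, the '$-$' handle degenerates and the surface limits to $M_1^{+}$, for which the $M_1^{+}$--period vanishes; this isolates the '$+$'--period, and by the argument of Theorem 5.1 (Section \ref{sec:m_k+}) one has $\pi^{+}<0$ when $\beta_3$ is small and $\pi^{+}>0$ when $\beta_3$ is large. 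On $\{\beta_3=\delta\}$, the '$+$' handle degenerates and the limit is $M_1^{-}$, whose period vanishes (Section \ref{sec:m_k-}); the surviving '$-$'--period then changes sign between $\beta_5$ small and $\beta_5$ large by the same Jenkins--Serrin barrier argument used in Theorem \ref{thm:2k+1mm}.1 (small $\beta_5$ approximates $M_1^{+}$, large $\beta_5$ uses the barrier graph in which the relevant edge has drifted to $-\infty$, forcing the Gauss map on $B_5$ to limit to a constant). Symmetric statements hold on the edges $\{\beta_5=T\}$ and $\{\beta_3=T\}$.

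Once these sign patterns are in place, I would apply Poincar\'e--Miranda (equivalently, a mapping degree argument as alluded to in the introduction) to the continuous map $(\beta_3,\beta_5)\mapsto(\pi^{+},\pi^{-})$ on $R$. The four sign conditions force the topological degree of this map around the origin to be nonzero, yielding a pair $(\beta_3^{*},\beta_5^{*})\in R$ at which both periods vanish. The resulting minimal surface has all the prescribed planar boundary symmetries; Theorem \ref{thm:krust} gives embeddedness of the fundamental piece, and Schwarz reflection (Theorem \ref{thm:schwarz}) together with the maximum principle and the maximum principle at infinity yield an embedded doubly--periodic surface $M_1^{+-}$. Varying the free parameter $\beta_2$ over $(\epsilon,\infty)$ produces the desired one--parameter family.

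The main obstacle is the sign analysis on the four boundary edges of $R$: unlike in the one--dimensional cases, the two periods are coupled, and one must verify that the degeneration of one handle really does kill its own period while leaving the other period to behave monotonically enough in the remaining parameter. The delicate point is continuity and uniformity of the Jenkins--Serrin solutions (and of the Gauss map along the relevant edges) as one parameter is driven to an extreme while the other is held fixed but arbitrary in $[\delta,T]$; this requires the convergence arguments from \cite{jes1} applied carefully along each edge and in each corner, and is what justifies passing from four one--dimensional sign computations to a two--dimensional winding statement.
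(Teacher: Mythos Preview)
Your overall architecture---set up the two periods as functions of $(\beta_3,\beta_5)$ and apply a degree argument on a rectangle---matches the paper's.  But the edge analysis you sketch has a real gap.  You claim that on $\{\beta_5=\delta\}$ (resp.\ $\{\beta_3=\delta\}$) the degenerated handle ``kills its own period,'' so that only one period survives on each edge and Poincar\'e--Miranda applies.  This is false: when $\beta_5\to 0$ the surface degenerates to an $M_1^+$ \emph{candidate}, but the quantity $\pi^-$ (the displacement between the planes that would bound the $-$ handle) does not vanish---it equals the corresponding displacement on $M_1^+$, which is generically nonzero and depends on $\beta_3$.  In the paper's notation one has $\pi_2(\beta_2,0,0)>0$ from the embeddedness of $M_1$, and the sign of $\pi_2(\beta_2,S,0)$ for large $S$ is not automatic: it is positive when $\beta_2$ is close to $\epsilon$ and negative when $\beta_2$ is large, so a \emph{specific} $\beta_2$ must be chosen (by an intermediate--value argument in $\beta_2$) to force $\pi_2(\beta_2,S,0)=0$.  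Consequently the one--parameter family is parametrized by $\beta_2$ only in a small interval near this special value, not by all $\beta_2>\epsilon$ as you assert.

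What the paper actually does to make the degree argument go through is rather different from reducing to four one--dimensional sign computations.  It proves a new ingredient absent from the genus--two cases: both $\pi_1$ and $\pi_2$ are \emph{strictly monotone} in each of $\beta_3$ and $\beta_5$, obtained by comparing two Jenkins--Serrin graphs that share the segment $B_4^*$ and applying the boundary maximum principle to the normal along that segment.  With monotonicity in hand, the paper computes the signs of $(\pi_1,\pi_2)$ only at the three corners $(0,0)$, $(0,T)$, $(S,0)$ of the rectangle (the third requiring the special choice of $\beta_2$), and monotonicity along the edges then forces the image of the boundary to be a homotopically nontrivial loop in $\bfR^2\setminus\{0\}$.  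Your proposal would need either this monotonicity step or a replacement for it; without it the ``four sign conditions'' you invoke do not hold on entire edges and the Poincar\'e--Miranda hypothesis fails.
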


\begin{figure}
\begin{center}
  \includegraphics[width=5.5cm]{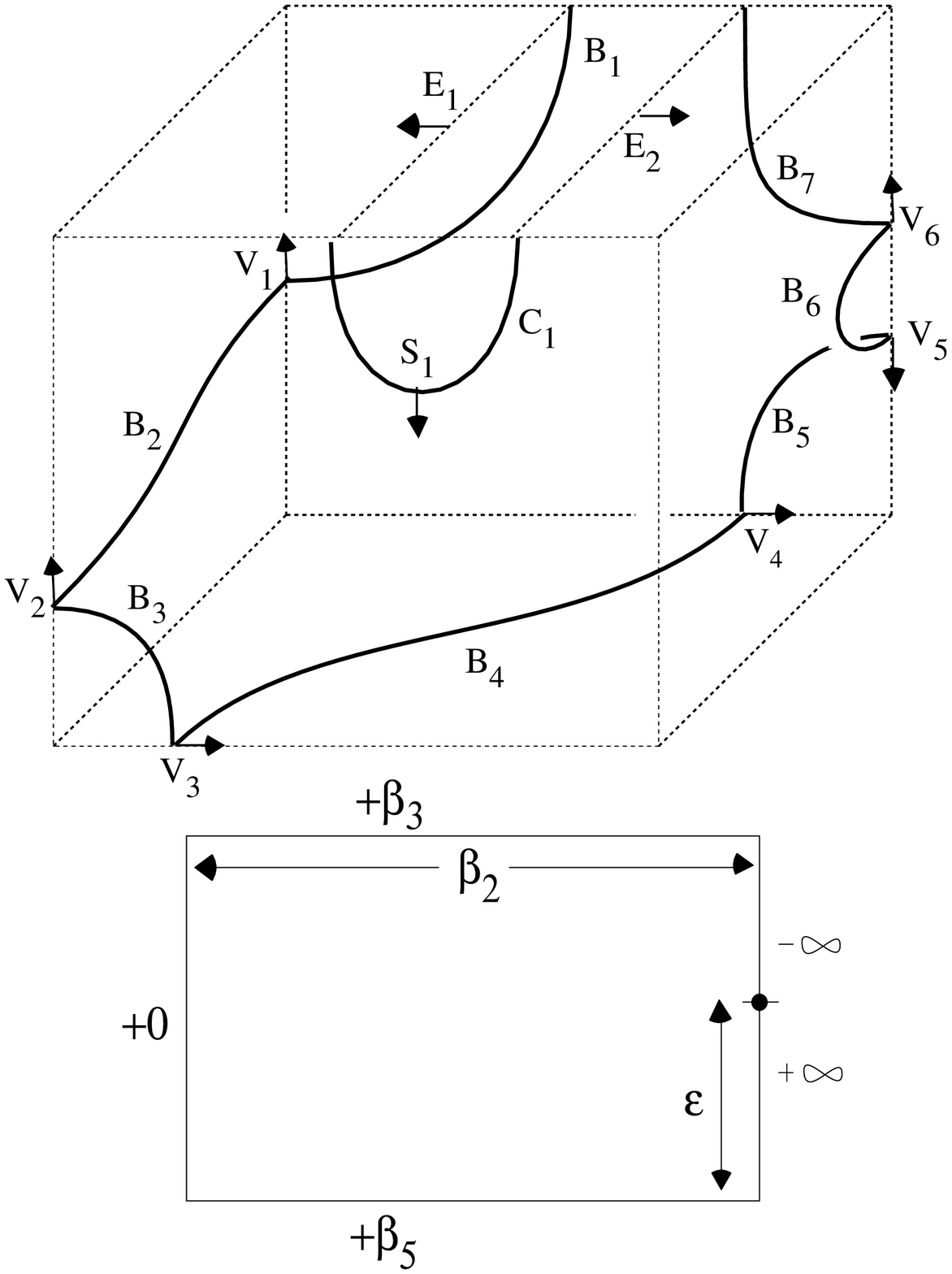} 
  \includegraphics[width=5.5cm]{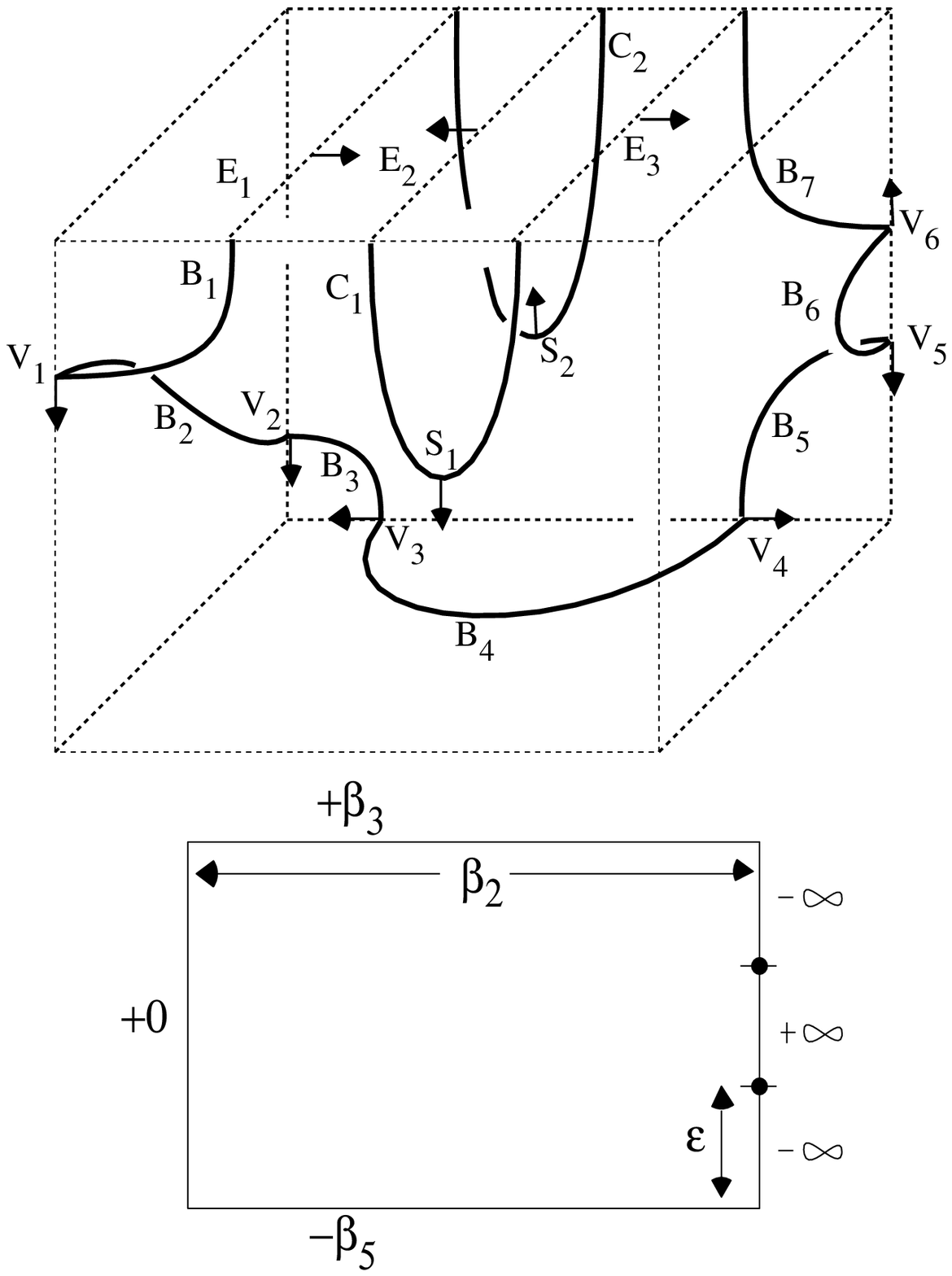} 
\end{center}
   \caption{Sketches of one eighth of $M_{2}^{+-}$ 
	(upper left) and $M_{3}^{+-}$ (upper right) with
	the Jenkin--Serrin graph boundary heights for the conjugates 
	of each eighth drawn below.}		\label{fig:mkpm_sketches}
\end{figure}
 In particular, we consider the periods along the curves
in the $(\beta_3, \beta_5)$ plane given by 
   \begin{eqnarray*}
      \tau_1=(0, \beta_5) \mbox{ for } \beta_5 \in [0,T], & 
        \hspace*{0.5in} & \tau_2=(\beta_3, T) \mbox{ for } \beta_3 \in [0,S], \\
      \tau_3=(S, \beta_5) \mbox{ for } \beta_5 \in [0,T], & & \tau_4=(\beta_3, 0)
         \mbox{ for } \beta_3 \in [0,S], 
   \end{eqnarray*}
for positive $S$ and $T$, and are able to 
show that, with the correct choices for $\beta_2$, $S$, and $T$, 
these curves surround a solution for the period problem.

 The conjugate contours for $M_1^{+-}$ associated
to points along the curves $\tau_1$ and $\tau_4$ degenerate to 
conjugate contours for either known surfaces or surfaces
which are known to have unsolvable period problems.
 When the degenerate contour is known to have a solvable
period problem, we assume nothing about the values of these
periods, and in general the remaining
unfixed parameter which we will not specify has been
shown to control this period.
 We seek to use only the general shape of the degenerate
contours and not the solvability of the period problems on 
the lower genus minimal surfaces.
 On each of the degenerate surfaces, the labels we use 
are inherited 
from the contour for $M_1^{+-}$, which may differ from those used
previously in the text.

\begin{proofspec}{Theorem \ref{thm:mkpm}}
Consider one-eighth of the fundamental piece, analogous to the 
depictions of $M_2^{+-}$ and $M_3^{+-}$ in 
Figure \ref{fig:mkpm_sketches}.  This one-eighth piece is bounded 
by seven planar geodesics $B_1$,...,$B_7$.  $B_1$ and $B_7$ are each of 
infinite length with a single endpoint, and $B_2$,...,$B_6$ are each 
finite length curve segments.  Let $\beta_j=$ Length($B_j$) for 
each $j=2,3,4,5,6$.  The singular points of the boundary 
are $V_j = B_j \cap B_{j+1}$ for $j = 1,...,6$.  (Unlike the 
cases when $k \ge 2$, there are no curves $C_j$, as in 
Figure \ref{fig:mkpm_sketches}.)  
We place the surface so that $g$ equals $1$ at the single end 
$E_1$ and equals $0$ at $V_1$, and we define the functions
   \begin{eqnarray}				\label{eq:mkpm_per}
      \pi_1(\beta_2,\beta_3,\beta_5) & = & \mbox{Re} \int_{V_3}^{V_4} \phi_2 
                  \; \;     = \; \mbox{Re} \int_{V_2}^{V_4} \phi_2, \nonumber \\
      \pi_2(\beta_2,\beta_3,\beta_5) & = & \mbox{Re} \int_{V_4}^{V_6} \phi_2 
                 \; \;     = \; \mbox{Re} \int_{V_5}^{V_6} \phi_2, \nonumber
   \end{eqnarray}
   where $\phi_2$ is the second component of the \Weierstra\ map
   given in equation (\ref{eq:WSD2}).  We will show that:
   \setcounter{i}{1}
   \begin{list}{\em \roman{i})}{\usecounter{i}}{\setlength{\leftmargin}{1in}}
      \item $\pi_1(\tau_j)$ and $\pi_2(\tau_j)$ change monotonically 
            on each $\tau_j$, for $j=1,2,3,4$.  In particular, 
for each fixed $\beta_2$ and $\beta_5$, $\pi_1(\beta_2,\beta_3,\beta_5)$ is 
a strictly decreasing function of $\beta_3$; 
for each fixed $\beta_2$ and $\beta_3$, $\pi_1(\beta_2,\beta_3,\beta_5)$ is 
a strictly increasing function of $\beta_5$; 
for each fixed $\beta_2$ and $\beta_5$, $\pi_2(\beta_2,\beta_3,\beta_5)$ is 
a strictly decreasing function of $\beta_3$; and 
for each fixed $\beta_2$ and $\beta_3$, $\pi_2(\beta_2,\beta_3,\beta_5)$ is 
a strictly decreasing function of $\beta_5$.  
      \item For all $\beta_2 > \epsilon$, $\pi_1(\beta_2,0,0) > 0$ and 
           $\pi_2(\beta_2,0,0) > 0$.  
      \item For any fixed $\beta_2 > \epsilon$, if $T$ is chosen sufficiently large, 
          then $\pi_1(\beta_2,0,T) > 0$ and $\pi_2(\beta_2,0,T) < 0$.  
      \item There exist choices for $\beta_2 > \epsilon$ and 
$S$ large so that $\pi_1(\beta_2,S,0) < 0$ and $\pi_2(\beta_2,S,0) = 0$.  
\end{list}
We consider the period map $\Pi(\beta_2,\beta_3,\beta_5) = 
(\pi_1(\beta_2,\beta_3,\beta_5), \pi_2(\beta_2,\beta_3,\beta_5))$.  We choose 
$\beta_2$, $S$, and $T$ so that 
$\pi_1(\beta_2,0,T) > 0$, $\pi_2(\beta_2,0,T) < 0$, 
$\pi_1(\beta_2,S,0) < 0$, and $\pi_2(\beta_2,S,0) = 0$.  
Since $\beta_2$ is then a fixed value, we may consider 
$\pi_1 = \pi_1(\beta_3,\beta_5)$ and 
$\pi_2 = \pi_2(\beta_3,\beta_5)$ as functions of only the two 
variables $\beta_3$ and $\beta_5$.  Hence 
$\Pi$ is a map from $\bfR^2$ to $\bfR^2$.  By the monotonic 
behavior of $\pi_1$ and $\pi_2$ on each $\tau_j$, 
it follows that the image of 
$\tau_1 \cup \tau_2 \cup \tau_3 \cup \tau_4$ under $\Pi$ is a homotopically 
nontrivial loop in $\bfR^2 \setminus \{ (0,0) \}$.  Thus 
a zero for the period map $\Pi$ lies 
   in the region bounded by the curves $\tau_j$.
Hence the period problem associated to $M_{1}^{+-}$ is solvable.  

We prove items (i),(ii),(iii),(iv) above by studying the conjugate 
surface of the original one-eighth portion bounded by planar geodesics 
$B_1$,...,$B_7$.  The conjugate surface is a graph $\cal B$ with respect to 
the $x_2$ direction over the 
rectangle $\{(x_1,0,x_3) \in \bfR^3 \, | \, 0 \leq x_1 \leq \beta_2, 
0 \leq x_3 \leq \epsilon \}$ in the $x_1x_3$-plane, and its boundary, 
the conjugate contour, consists of 
seven lines $B_1^*$,...,$B_7^*$ corresponding to the planar geodesics 
$B_1$,...,$B_7$ in the boundary of the original surface.  Since 
conjugation preserves lengths, we have 
$\beta_j = $Length($B_j$)$=$Length($B_j^*$).  Thus $B_1^*$ and 
$B_7^*$ are each infinite rays with a single endpoint, and 
$B_2^*$,...,$B_6^*$ are each finite line segments.  The singular points 
of the conjugate contour are $V_j^* = B_j^* \cap B_{j+1}^*$ for 
$j = 1,...,6$, corresponding to the points $V_j$ on the original surface.  
$B_1^*$ is the ray with endpoint $(\beta_2,-\beta_3,\epsilon)$ pointing 
in the direction of the positive $x_2$-axis.  
$B_2^*$ is the line segment with endpoints 
$(\beta_2,-\beta_3,\epsilon)$ and $(0,-\beta_3,\epsilon)$.  
$B_3^*$ is the line segment with endpoints 
$(0,-\beta_3,\epsilon)$ and $(0,0,\epsilon)$.  
$B_4^*$ is the line segment with endpoints 
$(0,0,\epsilon)$ and $(0,0,0)$.  
$B_5^*$ is the line segment with endpoints 
$(0,0,0)$ and $(0,\beta_5,0)$.  
$B_6^*$ is the line segment with endpoints 
$(0,\beta_5,0)$ and $(\beta_2,\beta_5,0)$.  
$B_7^*$ is the 
ray with endpoint $(\beta_2,\beta_5,0)$ pointing in the 
direction of the positive $x_2$-axis.  

We denote this conjugate graph by ${\cal B}(\beta_2,\beta_3,\beta_5)$, 
since it depends on the values of $\beta_2$, $\beta_3$ and $\beta_5$.  (It 
also depends on $\epsilon$, but $\epsilon$ will remain 
fixed, so we do not notate this dependence.)  

   {\bf Proof of (i):}
Choose nonnegative values $\beta_3$, $\tilde{\beta}_3$, and 
$\beta_5$, with $\beta_3 < \tilde{\beta}_3$, and choose any 
$\beta_2 > \epsilon$.  Then the interior 
of the graph 
${\cal B}(\beta_2,\beta_3,\beta_5)$ lies above the interior of 
${\cal B}(\beta_2,\tilde{\beta}_3,\beta_5)$ with respect to the 
$x_2$ direction, by Remark \ref{rmk:wayne}.  These two graphs have the 
line $B_4^*$ in common, and it follows that as one travels from 
$V_3^*$ to $V_4^*$ along $B_4^*$, the normal vector along 
$B_4^*$ of ${\cal B}(\beta_2,\beta_3,\beta_5)$ is turning ahead of 
the normal vector along $B_4^*$ of 
${\cal B}(\beta_2,\tilde{\beta}_3,\beta_5)$.  Furthermore, by the maximum 
principle these two normal vectors can never be equal in the interior 
of $B_4^*$.  This means that 
on the original surfaces the normal vector along $B_4$ for 
$\beta_2,\beta_3,\beta_5$ is turning strictly ahead of the normal vector along
$B_4$ for $\beta_2,\tilde{\beta}_3,\beta_5$, with respect to arc length.
Since Length($B_4$)$=$Length($B_4^*$)$=\beta_4=\epsilon$ is independent of 
$\beta_3$, it follows that $\pi_1(\beta_2,\beta_3,\beta_5) 
> \pi_1(\beta_2,\tilde{\beta_3},\beta_5)$.  

We have just shown that for each fixed $\beta_2$ and $\beta_5$, $\pi_1$ is a 
strictly decreasing function of $\beta_3$.  Similar arguments show the 
other parts of {\bf (i)}.  

   {\bf Proof of (ii):}
If $\beta_3 = \beta_5 = 0$, then $V_2$ coincides with $V_3$ and 
$V_4$ coincides with $V_5$.  The conjugate graph of this surface 
is unique, by Theorem \ref{thm:Jen-Ser}, hence the surface is 
unique.  Therefore it is $M_{1}$.  The embeddedness of 
$M_{1}$ implies that $\pi_2(\beta_2,0,0) > 0$.  

The surface $M_1$ contains a vertical line, and this line divides both 
$M_1$ and $B_4$ into two congruent pieces.  Let $\hat{B}_4$ be the 
half of $B_4$ that connects the midpoint of $B_4$ to $V_3 = V_2$.  Let 
$\hat{M}_1$ be the congruent piece of $M_1$ bounded by $B_1$, $B_2$, 
$\hat{B}_4$, and the vertical line.  Since $\hat{M}_1$ has a single 
Scherk-type end 
whose normal is parallel to the $x_1$ axis, the maximum principle implies that 
the $x_2$ coordinate function on $M_1$ cannot be maximized in the interior of 
$M_1$.  Furthermore, as $B_2$ is a planar geodesic in a plane parallel to the 
$x_2x_3$-plane, the boundary maximum principle implies that $x_2$ cannot be 
maximized on $B_2$.  Similarly, $x_2$ cannot be maximized on the interior of 
$\hat{B}_4$.  Therefore the value of the $x_2$ coordinate at $V_2=V_3$ is 
strictly less than the value of the $x_2$ coordinate at the midpoint of 
$B_4$.  So $\pi_1(\beta_2,0,0) > 0$.  

   {\bf Proof of (iii):} 
Fix $\beta_2 > \epsilon$, and 
choose $\beta_3=0$ and $\beta_5= T >> 1$.  Then 
$\lim_{T \to \infty} {\cal B}(\beta_2,0,T)$ is a graph bounded by 
$B_1^*$, $B_2^*$, $B_4^*$, and an infinite ray with endpoint at 
$V_4^*$ pointing in the direction of the positive $x_2$-axis.  
This graph has a single Scherk--type end of width 
$\sqrt{\beta_2^2 + \epsilon^2}$.  (The fact that this limiting 
behavior occurs follows from the arguments in \cite{jes1}.  In this 
proof we will consider various limit surfaces, and in all cases the 
existence of the limit graph follows from \cite{jes1}.)  

The original surface corresponding to 
$\lim_{T \to \infty} {\cal B}(\beta_2,0,T)$ via conjugation 
is bounded by the planar geodesics $B_1$, $B_2$, $B_4$, and an 
infinite version of $B_5$.  It has a single non-vertical Scherk--type 
end of width $\sqrt{\beta_2^2 + \epsilon^2}$.  
On $\lim_{T \to \infty} {\cal B}(\beta_2,0,T)$, the maximum principle 
implies that its normal vector $\vec{N}$ along $B^*_4$ lies 
within a $90^o$ geodesic arc of the unit sphere (so this is also true 
along $B_4$), and thus the $x_2$ coordinate at $V_4$ is greater 
than the $x_2$ coordinate at $V_2 = V_3$ on the 
original surface, so $\lim_{T \to \infty} \pi_1(\beta_2,0,T) > 0$.  Hence 
$\pi_1(\beta_2,0,T) > 0$ for $T$ sufficiently large.  

Now we consider the limiting conjugate surface 
$\lim_{T \to \infty} ({\cal B}(\beta_2,0,T) - (0,T,0))$, which is 
a graph bounded by 
$B_6^*$, $B_7^*$, an infinite version of $B_5^*$ equal to the negative 
$x_2$ axis, and a complete line through 
$(\beta_2,0,\epsilon)$ parallel to the the $x_2$-axis.  
This conjugate surface has two ends of Scherk--type.  One end has 
width $\epsilon$ and the other has width 
$\sqrt{\beta_2^2 + \epsilon^2}$.  The original surface that corresponds 
to it via conjugation is bounded by $B_6$, $B_7$, an infinite version 
of $B_5$, and a 
complete infinite version of $B_1$.  It has two ends, again of 
width $\epsilon$ and $\sqrt{\beta_2^2 + \epsilon^2} > \epsilon$.  Because 
of the relative widths of the ends on this original surface, we see 
that the $x_2$ coordinate at $V_5$ is greater 
than the $x_2$ coordinate at $V_6$, so 
$\lim_{T \to \infty} \pi_2(\beta_2,0,T) < 0$.  Hence 
$\pi_2(\beta_2,0,T) < 0$ for $T$ sufficiently large.  (See Figure 
\ref{addedfig}.)  

\begin{figure}
\begin{center}
\unitlength=1.0pt
\begin{picture}(-200.00,200.00)(0.00,300.00)
\put(-16.00,270.00){\makebox(0,0)[cc]{$B_7$}}
\put(-203.00,80.00){\makebox(0,0)[cc]{$x_1$}}
\put(-224.00,103.00){\makebox(0,0)[cc]{$x_3$}}
\put(-214.00,88.00){\makebox(0,0)[cc]{$x_2$}}
\put(23.00,189.00){\makebox(0,0)[cc]{$B_6$}}
\put(-60.00,166.00){\makebox(0,0)[cc]{"$B_5$"}}
\put(-93.00,212.00){\makebox(0,0)[cc]{"$B_1$"}}
\put(-133.00,108.00){\makebox(0,0)[cc]{$\sqrt{\beta_2^2+\epsilon^2}$}}
\put(-27.00,307.00){\makebox(0,0)[cc]{$\epsilon$}}
\put(-230,80){\vector(1,0){20}}
\put(-230,80){\vector(0,1){20}}
\put(-230,80){\vector(2,1){10}}
\put(-200,100){\line(1,0){200}}
\put(-200,300){\line(1,0){200}}
\put(-200,100){\line(0,1){200}}
\put(0,100){\line(0,1){200}}
\put(0,100){\line(2,1){40}}
\put(-200,300){\line(2,1){40}}
\put(0,300){\line(2,1){40}}
\put(40,120){\line(0,1){200}}
\put(-160,320){\line(1,0){200}}
\bezier20(-200,100)(-180,110)(-160,120)
\bezier10(-48,300)(-32,308)(-16,316)
\bezier10(-180,100)(-160,110)(-140,120)
\bezier600(-180,100)(-48,232)(-48,300)
\bezier80(-160,120)(-160,220)(-160,320)
\bezier80(-160,120)(-60,120)(40,120)
\bezier80(40,200)(10,185)(10,210)
\bezier80(32,240)(10,229)(10,210)
\bezier300(32,240)(-16,240)(-16,316)
\bezier450(-140,120)(-60,200)(40,200)
\end{picture}
\vspace{4in}
\end{center}
    \caption{The original limit surface described at the end of the 
proof of (iii).}%

\label{addedfig}
\end{figure}
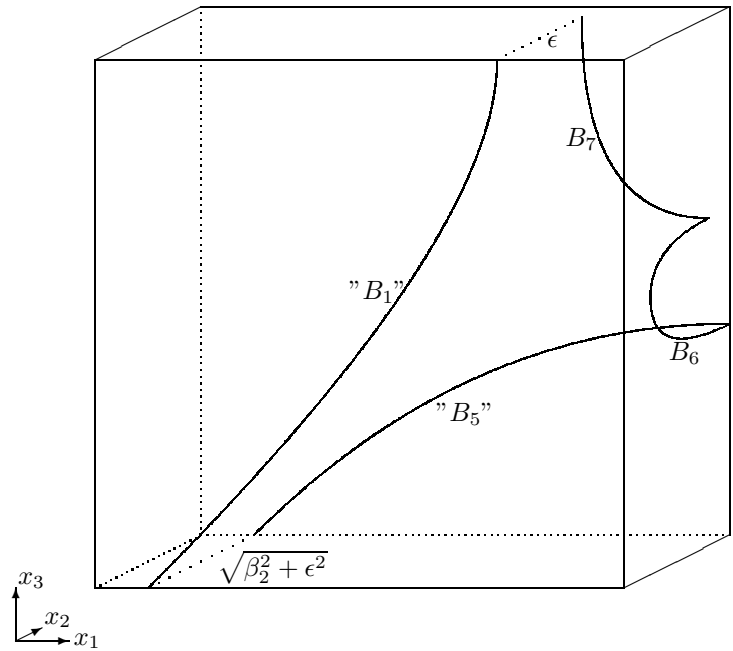

   {\bf Proof of (iv):}
Choose $\beta_2 > \epsilon$, 
$\beta_5=0$, and $\beta_3= S >> 1$.  We consider the limiting 
conjugate surface 
$\lim_{S \to \infty} {\cal B}(\beta_2,S,0)$, which is 
a graph bounded by 
$B_4^*$, $B_6^*$, $B_7^*$, an infinite ray with endpoint 
at $V_3^*$ pointing in the direction of the negative $x_2$-axis, 
and a complete line through 
$(\beta_2,0,\epsilon)$ parallel to the the $x_2$-axis.  
This conjugate surface has two ends of Scherk--type.  One end has 
width $\epsilon$ and the other has width 
$\beta_2$.  The original surface that corresponds 
to it via conjugation is bounded by $B_4$, $B_6$, $B_7$, an 
infinite ray version of $B_3$, and a 
complete infinite version of $B_1$.  It has two ends, again one of 
width $\epsilon$ and the other of width $\beta_2$.  

We now consider what happens to the original surface corresponding 
to $\lim_{S \to \infty} {\cal B}(\beta_2,S,0)$ as $\beta_2 
\searrow \epsilon$ and as $\beta_2 \nearrow \infty$.  

The conjugate surface $\lim_{\beta_2 \to \epsilon} 
(\lim_{S \to \infty} {\cal B}(\beta_2,S,0))$ is a graph with 
respect to the $x_2$ direction over the 
square $\{(x_1,0,x_3) \in \bfR^3 \, | \, 0 \leq x_1,  
x_3 \leq \epsilon \}$.  It is bounded by 
the infinite ray $B_7^*$ with endpoint $(\epsilon,0,0)$ pointing in the 
direction of the positive $x_2$-axis, the line segment $B_6^*$ from 
$(\epsilon,0,0)$ to $(0,0,0)$, the line segment $B_4^*$ from 
$(0,0,0)$ to $(0,0,\epsilon)$, and the infinite ray with 
endpoint $(0,0,\epsilon)$ pointing in the direction of the negative 
$x_2$-axis.  The corresponding original surface in bounded by the 
planar geodesics $B_4$, $B_6$, $B_7$, and a complete infinite version 
of $B_1$.  This original surface has two ends of Scherk--type, both 
of width $\epsilon$.  

Note that the graph $\lim_{\beta_2 \to \epsilon} 
(\lim_{S \to \infty} {\cal B}(\beta_2,S,0))$ contains the line 
segment from $(0,0,0)$ to $(\epsilon,0,\epsilon)$ and is symmetric 
with respect to rotation about this line, by uniqueness in Theorem 
\ref{thm:Jen-Ser} and by Theorem \ref{thm:schwarz}.  
The maximum principle then implies that the normal vector $\vec{N}$ along 
each of $B_4^*$ and $B_6^*$ is contained in a $90^o$ geodesic 
arc of the unit sphere, and thus the $x_2$ coordinate at $V_3$ is greater 
than the $x_2$ coordinate at $V_4 = V_5$ in the corresponding 
original surface, and the $x_2$ coordinate at $V_3$ equals 
the $x_2$ coordinate at $V_6$.  
Therefore $\lim_{\beta_2 \to \epsilon} 
(\lim_{S \to \infty} \pi_2(\beta_2,S,0)) = - 
\lim_{\beta_2 \to \epsilon} 
(\lim_{S \to \infty} \pi_1(\beta_2,S,0)) 
 > 0$.  Hence for $\beta_2$ sufficiently close to $\epsilon$ and 
$S$ sufficiently large, we have $\pi_2(\beta_2,S,0) > 0$.  

The limiting conjugate surface $\lim_{\beta_2 \to \infty} 
(\lim_{S \to \infty} {\cal B}(\beta_2,S,0))$ is a portion of a helicoid 
(this follows from \cite{jes1}) 
bounded by the positive $x_1$-axis, the line 
segment $B_4^*$ from $(0,0,0)$ to $(0,0,\epsilon)$, and 
an infinite ray with endpoint $(0,0,\epsilon)$ pointing in the direction 
of the negative $x_2$-axis.  On the corresponding original surface, one 
eighth of a catenoid, we 
then have that $B_4$ is a quarter circle of radius $2\epsilon/\pi$.  
Thus $\lim_{\beta_2 \to \infty} 
(\lim_{S \to \infty} \pi_1(\beta_2,S,0)) = -2\epsilon/\pi$.  Since the 
original surface corresponding to 
$\lim_{S \to \infty} {\cal B}(\beta_2,S,0)$ has two Scherk--type ends of 
width $\epsilon$ and $\beta_2$, it follows that $\lim_{\beta_2 \to \infty} 
(\lim_{S \to \infty} (\pi_1(\beta_2,S,0) + \pi_2(\beta_2,S,0))) = 
\lim_{\beta_2 \to \infty} (\epsilon - \beta_2) = - 
\infty$.  Thus for $\beta_2$ and 
$S$ sufficiently large, we have $\pi_2(\beta_2,S,0) < 0$.  

Therefore for some large $S$ and some value of $\beta_2 > 
\epsilon$, we have $\pi_2(\beta_2,S,0) = 0$.  If, for this 
$S$ and $\beta_2$, we 
have $\pi_1(\beta_2,S,0) \geq 0$, then the original surface corresponding 
to this $\beta_2$, $\beta_3=S$, and $\beta_5=0$ would contain 
some point in $B_4 \cup B_6$ where $x_2$ has a local maximum and where 
the tangent plane is parallel to the $x_1x_3$-plane.  This 
contradicts the maximum principle.  So, for this $S$ and 
$\beta_2$, we have $\pi_1(\beta_2,S,0) < 0$.  This 
shows {\bf (iv)}.  

    This completes the proof of the solvability of the period
   problem associated to $M_{1}^{+-}$.  Note that 
$\Pi(\tau_1 \cup \tau_2 \cup \tau_3 \cup \tau_4)$ changes continuously 
under continuous changes of $\beta_2$, so for all 
$\beta_2$ sufficiently close to the $\beta_2$ chosen above, 
$\Pi(\tau_1 \cup \tau_2 \cup \tau_3 \cup \tau_4)$ is still a homotopically 
nontrivial loop in $\bfR^2 \setminus \{(0,0)\}$, and so the 
period problem remains solvable.  
  Hence $\beta_2$ in a small open interval serves as a 
   deformation parameter for the surface, thereby yielding a 
   one--parameter family of these surfaces.
    Since each eighth of the surface is embedded, and the maximum
   principle tells us this embedded surface lies in the bounding 
   box determined by the planar curves $B_j$, each surface
   in the family is embedded.
    This completes the proof.
\end{proofspec}

 The proof of Theorem \ref{thm:mkpm}
cannot be directly adapted to prove existence of $M_{k}^{+-}$ for 
$k \ge 2$.  However, 
numerical evidence suggests that the $M_{k}^{+-}$ exist for $k \ge 2$ as 
well, so we make this conjecture (see Figure \ref{fig:intro_images2}).

\begin{conjecture}					\label{conj:mkpm}
    There exists a one-parameter family of genus three,
   embedded minimal surfaces $M_{k}^{+-}$ with $4k$ Scherk-type
   ends, for all $k \ge 2$.
\end{conjecture}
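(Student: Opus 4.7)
\textbf{Proof proposal for Conjecture \ref{conj:mkpm}.}
The plan is to imitate the mapping-degree argument of Theorem \ref{thm:mkpm} for $k=1$, applied to the conjugate contour of one eighth of $M_k^{+-}$ pictured (for $k=2,3$) in Figure \ref{fig:mkpm_sketches}. Fix $\epsilon>0$ and impose the condition of Lemma \ref{lem:mk} that every unbounded segment of the conjugate contour have common length $\epsilon$; this kills the $k-1$ end periods and leaves only the two periods $\pi_1$, $\pi_2$ arising from the `$+$' and `$-$' handles. The conjugate surface is a Jenkins--Serrin graph $\mathcal{B}(\beta_2,\beta_3,\beta_5)$ over a rectangle whose only difference from the $k=1$ case is that the edge opposite $B_2^*$ is subdivided into $k$ segments of length $\epsilon$ with heights alternating between $+\infty$ and $-\infty$. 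Once the two remaining periods are simultaneously made to vanish, Theorems \ref{thm:Jen-Ser}, \ref{thm:krust} and \ref{thm:schwarz}, combined with the maximum principle at infinity as in the proof of Theorem \ref{thm:2k+1mm}, produce the embedded doubly periodic surface.

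First I would establish the monotonicity of $\pi_1$ and $\pi_2$ in $\beta_3$ and $\beta_5$ exactly as in part (i) of the proof of Theorem \ref{thm:mkpm}. The comparison principle of Remark \ref{rmk:wayne} applies verbatim for any $k$: if $\beta_3<\tilde\beta_3$ then $\mathcal{B}(\beta_2,\beta_3,\beta_5)$ lies above $\mathcal{B}(\beta_2,\tilde\beta_3,\beta_5)$ in the $x_2$ direction, and since both graphs share the edge $B_4^*$, the unit normal along $B_4$ rotates strictly faster in the first case. The argument for $\beta_5$ and for $\pi_2$ is identical, and nothing depends on $k$.

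Next I would analyze the four sides of a rectangle $[0,S]\times[0,T]$ in the $(\beta_3,\beta_5)$-plane with $\beta_2>\epsilon$ fixed. At the corner $(0,0)$ the limit surface is $M_k$, so embeddedness of $M_k$ gives $\pi_2(\beta_2,0,0)>0$ and a vertical-line half argument like that in part (ii) of the proof of Theorem \ref{thm:mkpm} gives $\pi_1(\beta_2,0,0)>0$. On the side $\beta_5\to\infty$ with $\beta_3=0$ the limit Jenkins--Serrin graphs still have one end of width $\sqrt{\beta_2^2+\epsilon^2}$ and another of width $\epsilon$ among their ends, and the end-width comparison that produced $\pi_2<0$ for large $T$ in the $k=1$ case should persist.

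The principal obstacle is controlling the side $\beta_3\to\infty$ with $\beta_5=0$, and in particular the secondary limits $\beta_2\searrow\epsilon$ and $\beta_2\to\infty$. For $k=1$ these gave respectively a symmetric Jenkins--Serrin graph on a square (forcing $\pi_2=-\pi_1>0$ by the diagonal reflection symmetry) and a portion of a helicoid whose catenoidal conjugate yielded the sharp value $\pi_1\to-2\epsilon/\pi$ together with $\pi_1+\pi_2\to-\infty$. For $k\ge 2$ these two limit surfaces are Jenkins--Serrin graphs with additional alternating $\pm\infty$ segments, so there is no closed-form limit to exploit; one must instead extract the positivity of $\pi_2$ at $\beta_2$ near $\epsilon$ and the divergence $\pi_1+\pi_2\to-\infty$ as $\beta_2\to\infty$ by a more delicate asymptotic analysis of the $k$-ended limit graphs. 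This is precisely the step that breaks the direct adaptation of the $k=1$ proof, and is the reason the statement is left as a conjecture; once these asymptotic sign estimates are in place, the degree argument identical to the one in the proof of Theorem \ref{thm:mkpm} produces a zero of the period map $\Pi=(\pi_1,\pi_2)$ in the interior of $[0,S]\times[0,T]$, and the one-parameter family and embeddedness follow as before.
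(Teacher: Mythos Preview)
The statement is a \emph{conjecture} in the paper, not a theorem. The authors say explicitly that ``the proof of Theorem~\ref{thm:mkpm} cannot be directly adapted to prove existence of $M_{k}^{+-}$ for $k\ge 2$'' and rest the conjecture on numerical evidence alone. There is therefore no proof in the paper against which to compare your attempt.

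Your write-up is, correspondingly, not a proof but a proof \emph{strategy}, and you are honest about this. You correctly locate the essential obstruction in the analogue of step~(iv): for $k=1$ the double limits $\beta_2\searrow\epsilon$ and $\beta_2\to\infty$ (with $\beta_3\to\infty$, $\beta_5=0$) collapse to a diagonally symmetric Jenkins--Serrin graph over a square and to a helicoid piece, respectively, and these closed forms deliver the needed signs; for $k\ge 2$ the extra alternating $\pm\infty$ segments on the fourth side of the rectangle destroy both special limits. That diagnosis agrees with the paper's one-line remark and is the genuine gap.

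Two cautions about the steps you present as routine. In your version of~(ii), the vertical-line halving argument for $\pi_1(\beta_2,0,0)>0$ used that the single end of the $k=1$ eighth-piece sits symmetrically over the midpoint of $B_4$; for $k\ge 2$ the eighth-piece carries $k$ ends and the additional planar arcs $C_j$ (the paper flags this difference explicitly), so the relevant line of symmetry and the boundary maximum-principle argument have to be rechecked rather than quoted. In your version of~(iii), the phrase ``should persist'' is carrying real weight: as $T\to\infty$ the two limit pieces now each have several Scherk ends of width $\epsilon$ in addition to the ones of width $\sqrt{\beta_2^2+\epsilon^2}$ and $\epsilon$ that drove the sign of $\pi_2$ in the $k=1$ case, and the width-comparison inequality is no longer a comparison of just two numbers. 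These issues are secondary to the gap you already isolated, but a complete argument would have to address them too.
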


In fact, numerical evidence also suggests that there exists a wide 
variety of minimal surfaces with Scherk-type ends and more handles of both 
$+$ and $-$ type.  (See Figure \ref{fig:2k+_imageimage}.)  

\section{Appendix: a proof of Lemma 4.1}

The first part of Lemma 4.1 is intended only to be 
an intuitive aid, saying that "each collection of surfaces results from 
adding ends and handles to $M_1$".  However, a rigorous proof is required 
for the statement 
that "the period problems arising from the additional ends
are all solved by requiring $\epsilon = \mbox{length}(A_i) = \mbox{length}(B_j)$".  

For each surface, we always begin by choosing one-eighth of the original 
fundamental piece of the surface.  This one-eighth piece is bounded by 
planar geodesics, and its conjugate surface is bounded by portions of 
lines.  Before we consider any period problems, we must first establish
existence of this conjugate surface, which then implies the 
existence of the original one-eighth piece 
(without solving for period problems yet).  

The conjugate pieces exist because they are Jenkins-Serrin graphs.  In all the 
cases we consider, 
they are Jenkins-Serrin graphs over a rectangle, and the boundary 
data is a finite constant over each of three sides of the boundary of the
rectangle.  On the fourth side, the boundary data alternates between $+\infty$ and 
$-\infty$ along adjacent 
intervals.  The jump discontinuities occur only at the corners 
of the rectangle and at points along the fourth side where the boundary data 
changes from $+\infty$ to $-\infty$.  

Recall Theorem \ref{thm:Jen-Ser}.  In our case, by 
applying a rigid motion and a homothety of $\bfR^3$, we may assume without
loss of generality that $D = \{(x_1,x_2) \in \bfR^2 \; | \; 0 \leq x_1 \leq \delta, 
0 \leq x_2 \leq 1 \}$ for some positive $\delta$, that there are three $C_k$'s which 
we define as $C_1 = D \cap \{ (x_1,0) \in \bfR^2 \}$, 
$C_2 = D \cap \{ (\delta,x_2) \in \bfR^2 \}$, and $C_3 = D \cap \{ (x_1,1) 
\in \bfR^2 \}$, and that there are 
$\ell$ number of $A_i$'s and $B_j$'s, all of length $1 \over \ell$ alternating 
along $D \cap \{ (0,x_2) \in \bfR^2 \}$.  
Notice here that we have already incorporated the condition of Lemma 4.1, that is, 
that 
   \begin{displaymath}
	\epsilon = \mbox{length}(A_i) = \mbox{length}(B_j) = {1 \over \ell} \; . 
   \end{displaymath}

Existence and uniqueness 
of a solution $u$ to the minimal surface equation with the given boundary 
data now follows from Theorem \ref{thm:Jen-Ser}.  (The conditions on the 
polygons $\cal P$ are trivially satisfied, since no such $\cal P$ exists 
for the boundary conditions we are using.)  
Furthermore, the results in \cite{jes1} imply that 
$u$ is finite at every point in the interior of $D$.  

Let $M$ denote the smallest closed minimal surface that contains the graph $u$.  
(Here we use the word "closed" in the sense that $M$ contains all of its accumulation 
points.)  Hence the interior of $M$ is the interior of the 
graph $u$, and $M$ contains its boundary 
$\partial M$, and the image of the vertical projection of $M$ to the $x_1x_2$-plane is 
$D \setminus \{(0,x_2) \in \bfR^2 \, | \, x_2 
\neq {k \over \ell} \mbox{ some } k \in \bfZ \}$.  

We now prove Lemma 4.1 in a series of eight claims.  

\begin{quote}
{\bf Claim 1:} $M$ has finite total absolute curvature.
\end{quote}

\begin{proof}
Following the proof in \cite{jes1}, p 334, $u$ is the limit of a subsequence 
of minimal graphs $\{u_n\}_{n=1}^\infty$ over 
Int($D$).  The minimal graphs $u_n$ are determined by 
replacing the boundary condition $+\infty$ by $+n$ on each $A_i$, 
replacing the boundary condition $-\infty$ by $-n$ on each $B_j$, and leaving the 
boundary data on $C_1 \cup C_2 \cup C_3$ unchanged.  

First we show that the total absolute 
curvature of the graph of $u_n$ is bounded above by a finite bound 
independent of $n$, which follows 
easily from the Gauss-Bonnet formula.  The boundary of $u_n$ is polygonal with at 
most $2\ell + 6$ boundary line segments, and at each intersection of adjacent 
boundary line segments the angle of intersection is $\pi \over 2$.  Hence 
the total geodesic curvature of the boundary curve for the graph $u_n$ is 
at most ${\pi \over 2} (2\ell +6)$.  The Gauss-Bonnet formula then implies 
\begin{equation}\label{pf1eqn1} 
\int_{Graph(u_n)} |K| dA \leq \pi (\ell + 1) \; \; \; \; \forall n \; , 
\end{equation}
where $dA$ is the area form on Graph($u_n$) induced by $\bfR^3$, and $K$ is the 
intrinsic Gaussian curvature of Graph($u_n$).  

Now we claim that for 
any compact convex domain $D^\prime \subset $Int($D$), there exists a subsequence 
$\{n_j\}_{j=1}^\infty$ such that the total absolute curvature of the 
graphs of $u_{n_j}$ restricted to the domain $D^\prime$ converges to the total 
absolute curvature of the graph of 
$u$ restricted to $D^\prime$.  That is, we claim that 
\begin{equation} \label{pf1eqn2}
\int_{Graph(u_{n_j}|_{D^\prime})} |K|dA \to \int_{Graph(u|_{D^\prime})}|K|dA 
\end{equation} 
as $n_j \to \infty$.  This follows from the fact that, as shown in \cite{jes1},  
$u_{n}|_{D^\prime}$ converges uniformly to $u|_{D^\prime}$ as $n \to \infty$.  
The convergence (\ref{pf1eqn2}) is essentially known, and arguments showing it 
exist in several places.  For example, a proof is contained in the arguments 
proving Theorem 2 in \cite{MY}.  The arguments in \cite{MY} are intended for 
more general ambient spaces, and when the ambient space is $\bfR^3$ the 
arguments in \cite{MY} can be considerably simplified.  A simpler argument for 
the $\bfR^3$ case can be found in section III.2 of \cite{C}.  

For completeness, in this paragraph we outline an argument showing 
(\ref{pf1eqn2}).  We know that the 
$u_n$ converge uniformly to $u$ over $D^\prime$, by \cite{jes1}.  These 
graphs $u_n|_{D^\prime}$ (resp. $u|_{D^\prime}$) are graphs over 
convex domains in the $x_1x_2$-plane and hence are the unique compact 
minimal surfaces with respect to their boundaries.  Hence 
they coincide as surfaces in $\bfR^3$ with the Douglas-Rado solutions 
$f_n: B^2:=\{(u,v) \in \bfR^2 \, | \, u^2+v^2 \leq 1 
\} \to \bfR^3$ (resp. $f: B^2 \to \bfR^3$) for their boundaries.  
That is, the surfaces $f_n(B^2)$ and $\{(x_1,x_2,u_n(x_1,x_2)) \in \bfR^3 \, | \, 
(x_1,x_2) \in D^\prime \}$ (resp. $f(B^2)$ and $\{(x_1,x_2,u(x_1,x_2)) \in \bfR^3 \, | \, 
(x_1,x_2) \in D^\prime \}$) coincide.  The parametrizations $f_n$ and $f$ have the 
advantage that they are conformal, hence the coordinate functions 
$f_n^i,f^i,i=1,2,3$ are harmonic on $B^2$.  Using 
arguments similar to those we use later to prove Claim 6 of this note, we can see
that in fact 
\[ \frac{\partial u_n}{\partial x_1} \to \frac{\partial u}{\partial x_1}, 
   \frac{\partial u_n}{\partial x_2} \to \frac{\partial u}{\partial x_2} \] 
converge uniformly over $D^\prime$ as well.  (This is equivalent to showing that 
the normal vectors of the graphs converge uniformly over $D^\prime$.)  Once we 
know that first derivatives of $u_n$ also converge 
uniformly, the arguments in the proof of Lemma 3.2 and the remark following it in 
\cite{C} can be applied: using the three-point condition as in \cite{C}, we can 
find a subsequence $f_{n_j}$ of the $f_n$ which converge uniformly to 
$f$ on $\partial B^2$.  Since the functions $f_{n_j}^i,f^i$ are harmonic, and 
hence the functions $|f_{n_j}^i - f^i|$ always attain their 
maximums on $\partial B^2$, we conclude that $f_{n_j} \to f$ uniformly on all of 
$B^2$.  Uniform convergence for harmonic functions implies that the convergence is 
smooth (this is a basic property of harmonic functions, see, for example, Theorem 
2.10 of \cite{GT}).  We conclude that the Douglas-Rado solutions $f_{n_j}$ converge 
smoothly to $f$.  Hence the total absolute curvature of the graphs 
$u_{n_j}|_{D^\prime}$ converges to the total absolute 
curvature of the graph $u|_{D^\prime}$.  This shows the convergence (\ref{pf1eqn2}).  

If the total absolute curvature of $M$ is strictly greater than $\pi (\ell+1)$, then 
there exists some compact convex domain $D^\prime \subset $Int($D$) such that 
the graph of $u|_{D^\prime}$ has total absolute curvature strictly greater 
than $\pi (\ell+1)$.  However, then the convergence (\ref{pf1eqn2}) contradicts 
equation (\ref{pf1eqn1}).  Therefore the total absolute 
curvature of $M$ is at most $\pi (\ell + 1)$, and Claim 1 is shown.  
\end{proof}

\begin{quote} {\bf Claim 2:} 
The are only a finite number of points of $M$ at which the tangent plane 
is horizontal.  
\end{quote}

\begin{proof}
The proof below is simply a modification of an argument in the proof of 
Theorem 3.1 of \cite{MW}.  

Consider the Gauss map $G:M \to S^2 = \{(x_1,x_2,x_3) \in \bfR^3 \, | \, x_1^2+x_2^2+
x_3^2=1 \}$.  $M$ is the closure of the graph $u$, so 
$M$ is orientable, and so $G$ is well-defined.  We can define $G$ so that $G(M) 
\subset S^2 \cap \{(x_1,x_2,x_3) \in \bfR^3 \, | \, x_3 \geq 0 \}$.  
With respect to conformal coordinates on $M$, $G$ is a holomorphic map from $M$ to the 
upper hemisphere of $S^2$, hence $G$ is a branched covering with boundary 
into the upper hemisphere.  Furthermore, since $\partial M$ consists of 
portions of lines parallel to the coordinate axes in $\bfR^3$, 
\[ G(\partial M) \subset \{(x_1,x_2,x_3) \in S^2 \, | \, x_1=0 \mbox{ or } 
x_2=0 \mbox{ or } x_3=0 \} \cap \{(x_1,x_2,x_3) \in \bfR^3 \, | \, x_3 \geq 0 \} \; . \]  
Therefore the covering degree of $G$ is a constant on each of the four sets 
\[ \{(x_1,x_2,x_3) \in S^2 \, | \, x_1>0,x_2>0,x_3>0 \} \; , \; \; \; \; \;  
\{(x_1,x_2,x_3) \in S^2 \, | \, x_1<0,x_2>0,x_3>0 \} \; , \] 
\[ \{(x_1,x_2,x_3) \in S^2 \, | \, x_1>0,x_2<0,x_3>0 \} \; , \; \; \; \; \; 
\{(x_1,x_2,x_3) \in S^2 \, | \, x_1<0,x_2<0,x_3>0 \} \; . \] 
By Claim 1, these four constant covering degrees are 
all finite.  If the inverse image $G^{-1}(\vec{e}_3 = (0,0,1) )$ were 
to contain infinitely many points of $M$, then at least one of these 
four constant covering degrees would not be finite.  Hence 
$G^{-1}(\vec{e}_3 = (0,0,1) )$ is a finite set, showing Claim 2.  
\end{proof}

Let $P_s = \{(x_1,x_2,s) \in \bfR^3 \}$ be the horizontal plane 
in $\bfR^3$ of height $s$.  An immediate corollary to Claim 2 is the following 
Claim 3.  In Claim 3, by "nonsingular curves of $\bfR^3$", we mean curves of 
$\bfR^3$ which are 1-dimensional submanifolds with boundary.  

\begin{quote} {\bf Claim 3:} 
There exists a constant $L>0$ such that, for all $L^\prime > L$, $P_s \cap 
M, s \in [L,L^\prime]$ (resp. $s \in [-L^\prime,-L]$) is a smooth 
deformation (with respect to $s$) from $P_{L} \cap M$ to $P_{L^\prime} 
\cap M$ (resp. from $P_{-L} \cap M$ to $P_{-L^\prime} \cap 
M$) through an embeddeded collection of nonsingular curves of $\bfR^3$.  
\end{quote}

\begin{proof}
A singularity in this deformation can only occur at a point of $M$ where the tangent 
plane is horizontal.  By Claim 2, we can choose $L$ large enough that no such 
horizontal points exist in $\{(x_1,x_2,x_3) \in M 
\, | \, x_3 \geq L \}$ nor in $\{(x_1,x_2,x_3) \in M \, | \, x_3 \leq -L \}$.  This 
proves Claim 3.  
\end{proof}

Thus, by Claim 3, for any $L^\prime > L$, $M \cap \{(x_1,x_2,x_3) \in \bfR^3 \, | \, 
x_3 \in [L,L^\prime ] \}$ consists of a finite number of 
components, and each component is an embedded disk bounded 
by two vertical lines segments, and one curve in $P_{L^\prime}$, and one 
curve in $P_L$.  We choose any component $M_{comp}$ of $M \cap \{(x_1,x_2,x_3) \in \bfR^3 \, 
| \, x_3 \in [L,L^\prime ] \}$ and extend it by rotations of $\pi$ radians about 
vertical boundary lines (this can be done, and the extended surfaces are 
smooth, by the Schwarz reflection principle, Theorem 3.1).  
Extending $M_{comp}$ (and its extended surfaces) by these rotations a finite 
number of times results in a larger compact 
surface which still has only two vertical boundary line segments, and one 
boundary curve in $P_{L^\prime}$, and one boundary curve in $P_L$.  We make 
these rotational 
extensions enough times so that the distance in $\bfR^3$ from any point 
in $M_{comp}$ to the two boundary vertical line segments of the extended surface 
is greater than ${1 \over 4}(L^\prime - L)$.  We call this extended surface 
$M_{comp}^{ext}$ -- it is an immersed compact disk in $\bfR^3$, and is 
not necessarily embedded.  (We will later see that $M_{comp}^{ext}$ is indeed 
embedded for $L$ large enough.)  

\begin{quote} {\bf Claim 4:} $M_{comp}^{ext}$ is strongly stable.  
\end{quote}

\begin{proof}
The image $G(M_{comp})$ is contained in the upper hemisphere of $S^2$ and 
does not contain the north pole $\vec{e}_3$.  Since 
$M_{comp}^{ext}$ is comprised of a finite number of pieces 
congruent to $M_{comp}$ which are all images of vertical rotations 
of $M_{comp}$, it follows that $G(M_{comp}^{ext})$ is also 
contained in the upper hemisphere and does not contain 
$\vec{e}_3$.  In particular, the area of $G(M_{comp}^{ext})$ in $S^2$ is 
strictly less than $2\pi$.  

Theorem 1.2 of \cite{BdC} tells us that if the area of $G(M_{comp}^{ext})$ is 
less than $2\pi$, then $M_{comp}^{ext}$ is stable.  The map $G$ is not required 
to be an injection in order for this theorem to hold, and the minimal surface 
need only be an immersion -- it does not need to be an embedding.  
Furthermore, in \cite{BdC} the word {\em stable} is used 
in the strong sense; that is, a minimal surface is stable 
if the second derivative of area for any smooth nontrivial 
boundary-preserving variation is {\em strictly} positive.  This shows Claim 4.  
\end{proof}

For an oriented minimal surface ${\cal M} \subset \bfR^3$, 
let dist$_{\cal M}(A,B)$ be the intrinsic distance in 
$\cal M$ between two sets $A,B \subset {\cal M}$.  
For each point $q \in {\cal M}$, let $K_q$ be the Gaussian curvature of 
$\cal M$ at $q$, and let $\vec{N}_q$ be the oriented unit normal vector 
of $\cal M$ at $q$.  Let $\vec{e}_1 = (1,0,0)$, and let $\langle \cdot , 
\cdot \rangle$ be the standard inner product on $\bfR^3$.  Let 
dist$_{\bfR^3}(A,B)$ be the distance in 
$\bfR^3$ between two sets $A,B \subset {\bfR^3}$.  

By Corollary 4 of \cite{S} there exists a universal constant $c$ such that 
\[ |K_q| < {c \over (\mbox{dist}_{\cal M}(q,\partial {\cal M}))^2} \; , \] where 
$\cal M$ is any compact stable minimal surface in $\bfR^3$, and $q$ is any 
point in $\cal M$.  
This result (just like Theorem 1.2 of \cite{BdC}) does not require 
the surface $\cal M$ to be embedded -- 
only immersed.  The constant $c$ is universal in the sense that it is independent 
of the choice of $\cal M$.  (See Theorem 16.20 of \cite{GT} and Theorem 11.1 of 
\cite{O} for related results.)  

\begin{quote}
{\bf Claim 5:} On the surface $\hat{M} := M_{comp} \cap 
\{ (x_1,x_2,x_3) \in \bfR^3 \, | \, x_3 \in 
[{1 \over 4}L^\prime + {3 \over 4}L,{3 \over 4}L^\prime + {1 \over 4}L] \} $, 
the Gaussian curvature $K$ is uniformly bounded by 
\[ |K| < \frac{16c}{(L^\prime - L)^2} \; . \]
\end{quote}

\begin{proof}
$M_{comp}^{ext}$ is a compact minimal surface in $\bfR^3$, which is strongly 
stable by Claim 4.  For all $q \in \hat{M}$, 
dist$_{M_{comp}^{ext}}(q,\partial M_{comp}^{ext}) \geq {L^\prime - L \over 4}$.  
Now we apply Corollary 4 of \cite{S} and Claim 5 is proven.  
\end{proof}

Assume that $L^\prime$ is chosen large enough that 
${8\sqrt{c} \delta \over L^\prime - L } < 1$.  

\begin{quote}
{\bf Claim 6:} At every point of $M_{comp} \cap 
\{(x_1,x_2,x_3) \in \bfR^3 \, | \, x_3 \in [{3 \over 8}L^\prime + {5 \over 8}L,{5 
\over 8}L^\prime + {3 \over 8}L] \}$, we have 
\[ | \langle \vec{N},\vec{e}_1 \rangle | \geq \sqrt{1-{8 \sqrt{c} \delta 
\over L^\prime-L}} \; . \] 
\end{quote}

\begin{proof}
Suppose some point $p \in M_{comp} \cap \{(x_1,x_2,x_3) \in \bfR^3 \, | \, x_3 \in 
[{3 \over 8}L^\prime + {5 \over 8}L,{5 \over 8}L^\prime + {3 \over 8}L] \}$ has 
normal $\vec{N}_p$ so that 
$|\langle \vec{N}_p , \vec{e}_1 \rangle| < \sqrt{1-{8 \sqrt{c} \delta \over 
L^\prime - L}}$.  Then 
there is a tangent vector $\vec{T}$ at $p$ such that 
$\langle \vec{T} , \vec{e}_1 \rangle > \sqrt{{8 \sqrt{c} \delta \over L^\prime - 
L}}$.  Assume $L^\prime$ and $c$ are chosen large enough that 
$L^\prime -L > 1$ and $ c > 1024 \cdot \delta^2$.  
Consider a unit-speed geodesic $\gamma(t) \subset \hat{M}, 
t \in [0,(L^\prime-L)/8]$ so that $\gamma(0) = p$ and $\gamma^\prime (0) = 
\vec{T}$, where $\prime = \frac{\partial}{\partial t}$.  We define 
\[ t_0 := \sqrt{{\delta (L^\prime - L) \over 2 \sqrt{c}}} \; . \] 
Since $L^\prime > L+1$ and $c > 1024 \cdot \delta^2$, we have that 
$t_0 < (L^\prime-L)/8$ and hence $\gamma (t_0) \in \hat{M}$.  Let $k_g(t)$ be the 
geodesic curvature of $\gamma(t)$.  Since 
$|K_q| < \frac{16c}{(L^\prime - L)^2}$ for all $q \in \hat{M}$ by Claim 5, and 
since $\hat{M}$ is minimal, $|k_g(t)| < 
\frac{4\sqrt{c}}{(L^\prime - L)}$ for all $t \in [0,(L^\prime-L)/8]$.  Thus 
$|\gamma^{\prime \prime}(t)| < \frac{4\sqrt{c}}{(L^\prime - L)}$.  Writing 
$\gamma(t) = (\gamma_1(t),\gamma_2(t), 
\gamma_3(t))$ in terms of coordinates in $\bfR^3$, we have 
$|\gamma_1^{\prime \prime}(t)| < \frac{4\sqrt{c}}{(L^\prime - L)}$.  Then for 
$t \in [0,(L^\prime-L)/8]$, 
\[ |\gamma_1^\prime(t) - \gamma_1^\prime(0)| = 
\left| \int_0^t \gamma_1^{\prime \prime}(s) ds \right| \leq 
\int_0^t |\gamma_1^{\prime \prime}(s) | ds < \frac{4\sqrt{c}}{L^\prime - L} 
\cdot t \; \; , \]
and thus $\gamma_1^\prime(t) > \gamma_1^\prime(0) - 
\frac{4\sqrt{c}}{L^\prime - L} \cdot t$.  Therefore 
\[ \gamma_1(t_0) \geq \gamma_1(t_0) - \gamma_1 (0) = \int_0^{t_0} 
\gamma_1^\prime(t) dt > \int_0^{t_0} \left( \gamma_1^\prime(0) - 
{4 \sqrt{c} \over L^\prime -L} t \right) dt = \]
\[ = \gamma_1^\prime (0) t_0 - {2 \sqrt{c} \over L^\prime -L} t_0^2
> \sqrt{{8 \sqrt{c} \delta \over L^\prime-L}} \cdot t_0 
- {2\sqrt{c} \over L^\prime-L} t_0^2 = \delta \; . \]
The final inequality above follows from $\gamma_1^\prime (0) = 
\langle \vec{T}, \vec{e}_1 \rangle > \sqrt{{8\sqrt{c} \delta \over 
L^\prime -L }}$, and the final equality follows from the definition of 
$t_0$.  This is a contradiction, since the vertical projection to the 
$x_1x_2$-plane of the geodesic $\gamma(t) \subset \hat{M} \subset M$ is contained 
in $D$.  This proves Claim 6.  
\end{proof}

Note that $M_{comp}$ is one component of $M \cap \{(x_1,x_2,x_3) \in \bfR^3 \, | \, 
x_3 \in [L,L^\prime] \}$ and thus $M_{comp} = M_{comp}(L^\prime)$ depends 
on $L^\prime$.  We now wish to increase 
$M_{comp}$ to a connected noncompact surface $\tilde{M}$ that is independent 
of $L^\prime$.  Define 
\[ \tilde{M} := \cup_{L^\prime > L} \; M_{comp}(L^\prime) \; . \] 
Thus $M_{comp}(L^\prime) \subset \tilde{M}$ for all 
$L^\prime$, and $\tilde{M}$ is a disk bounded by one curve in 
$P_L$ and by two upward-pointing vertical rays $r_1,r_2$ with endpoints in $P_L$.  
Since $L^\prime > L + 
\mbox{max}(1,8 \sqrt{c} \delta)$ was arbitrary in the proof of Claim 6, 
an easy corollary of Claim 6 is the following:  

\begin{quote}
{\bf Claim 7:} 
The normal vector $\vec{N}$ on $\tilde{M}$ converges to $\pm \vec{e}_1$ at the 
end of $\tilde{M}$.  More precisely, for 
all $\rho \in (0,1)$, there exists ${\cal L}(\rho)>0$ such that at all 
points $q \in \{(x_1,x_2,x_3) \in \tilde{M} \, | \, x_3>{\cal L}(\rho) \}$, 
the normal $\vec{N}_q$ satisfies 
$||\vec{N}_q - \vec{e}_1|| < \rho$ or $||\vec{N}_q + \vec{e}_1|| < \rho$.  
\end{quote}

\begin{proof}
We choose $s$ so that $L^\prime = 2s$.  By Claim 6, if 
$L^\prime > L + \max (1,8 \sqrt{c} \delta )$, then 
\[ \langle \vec{N}_q,\vec{e}_1 \rangle^2 \geq 1 - {8 \sqrt{c} \delta 
\over 2s-L} \] for every point $q \in P_s \cap M_{comp}$.  Define 
\[ \vec{N}_q^\perp := \vec{N}_q - \langle \vec{N}_q,\vec{e}_1 \rangle 
\vec{e}_1 \; , \] Then $|| \vec{N}_q^\perp ||^2 \leq {8 \sqrt{c} \delta \over 
2s-L}$ and $\vec{N}_q \pm \vec{e}_1 = (\langle \vec{N}_q,\vec{e}_1 
\rangle \pm 1)\vec{e}_1 + \vec{N}_q^\perp$.  
By a straightforward computation, choosing 
\[ s > {16 \sqrt{c} \delta + 3 L \over 3 \rho^2} +1 \] is sufficient to ensure 
\[ \mbox{min}||\vec{N}_q \pm \vec{e}_1|| < \rho \; \mbox{ and } 
L^\prime > L + \max (1,8 \sqrt{c} \delta ) \; . \] Claim 7 is shown.  
\end{proof}

Using Claim 7 and elementary properties of conjugation, we now prove Lemma 4.1.  

Note that dist$_{\bfR^3}$($r_1,r_2$)$= {k \over \ell}$ for some positive integer $k$.  
By Claim 7 and the original construction of the boundary data (i.e. the choices 
we made for the $A_i,B_j$) in the Jenkins-Serrin graph, we see that $k=1$.  
Furthermore, by Claim 7, we have 
\begin{equation}\label{finaleqn}
 \mbox{dist}_{\tilde{M}}(r_1,r_2) = \mbox{dist}_{\bfR^3}(r_1,r_2) = 
{1 \over \ell}  \; . \end{equation} 
Let $\tilde{M}_{conj}$ be the conjugate surface of $\tilde{M}$.  
We have the following properties: 
\begin{enumerate}
\item Since conjugation is an isometry, $\tilde{M}_{conj}$ is 
bounded by one smooth curve of finite length, and two smooth curves 
$\hat{r}_1,\hat{r}_2$ of infinite length.  
\item Since 
conjugation maps straight lines to planar geodesics, 
$\hat{r}_1,\hat{r}_2$ are two boundary planar geodesics of 
$\tilde{M}_{conj}$ that are the images of the boundary rays $r_1,r_2$, respectively, 
under conjugation.  
\item Since conjugation preserves the Gauss map and hence also $\vec{N}$, 
$\hat{r}_1$ and $\hat{r}_2$ each lie in a horizontal plane.  We call these 
two horizontal planes $\hat{P}_1$ and $\hat{P}_2$, respectively.  
\item Since the normal vector $\vec{N}$ is preserved under conjugation, 
$\vec{N}$ on $\tilde{M}_{conj}$ converges to 
$\pm \vec{e}_1$ at the end of $\tilde{M}_{conj}$.  
\item By property 4 above, $\mbox{dist}_{\bfR^3}(\hat{P}_1,\hat{P}_2) = 
\mbox{dist}_{\tilde{M}_{conj}}(\hat{r}_1,\hat{r}_2)$.  
\item Since conjugation is an isometry, 
$\mbox{dist}_{\tilde{M}_{conj}}(\hat{r}_1,\hat{r}_2) = 
\mbox{dist}_{\tilde{M}}(r_1,r_2)$.  
\end{enumerate}
Finally, from equation (\ref{finaleqn}) and properties 5 and 6 above, we conclude: 

\begin{quote}{\bf Claim 8:}
$\mbox{dist}_{\bfR^3}(\hat{P}_1,\hat{P}_2) = {1 \over \ell}$.  
\end{quote}

On the conjugate $\tilde{M}_{conj}$ of $\tilde{M} \subset 
\{(x_1,x_2,x_3) \in M \, | \, x_3 \geq L \}$, the period problem at the end 
is a vertical translation comprised of one reflection through $\hat{P}_1$ composed 
with one reflection through $\hat{P}_2$.  Thus the period problem is a vertical 
translation of length exactly $2 \over \ell$, by Claim 8.  Likewise, the same holds 
for the conjugate surface of any other components of 
$\{(x_1,x_2,x_3) \in M \, | \, x_3 \geq L \}$ and any components of 
$\{(x_1,x_2,x_3) \in M \, | \, x_3 \leq -L \}$ as well, when $L$ is chosen large enough.  
Since the boundary behavior alternates between $+\infty$ and $-\infty$ along 
the alternating $A_i$'s and $B_j$'s, the normal 
vector of the graph $u$ must alternately 
approach $+\vec{e}_1$ and $-\vec{e}_1$ along the $A_i$'s and $B_j$'s.  
Therefore, as one travels along the line segment $D \cap \{(0,x_2) \in \bfR^2\}$,  
the vertical direction of the translation periods at the 
ends of the conjugate surface of $M$ alternates between upward and downward 
translations of length $2 \over \ell$.  

Thus Lemma 4.1 is shown.  


\bibliographystyle{plain}

\newpage

\begin{figure}
\begin{center}
  \includegraphics[width=12.5cm]{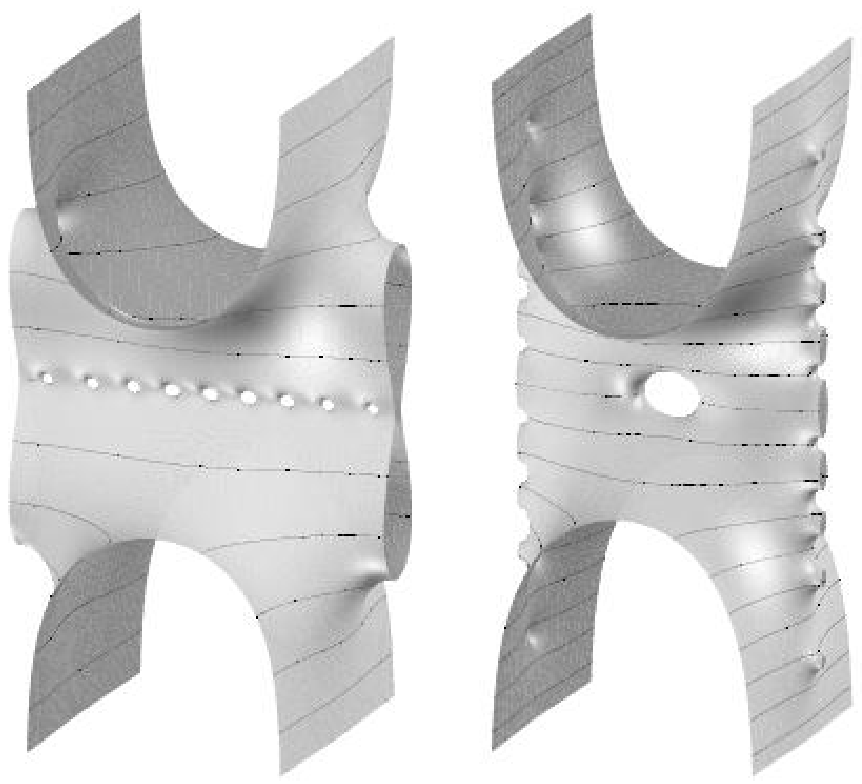} 
\end{center}
  \caption{Fundamental pieces of $M_1^{9+}$ (left) 
           and $M_1^{+,8-}$ (right).}           \label{fig:2k+_imageimage}
\end{figure}

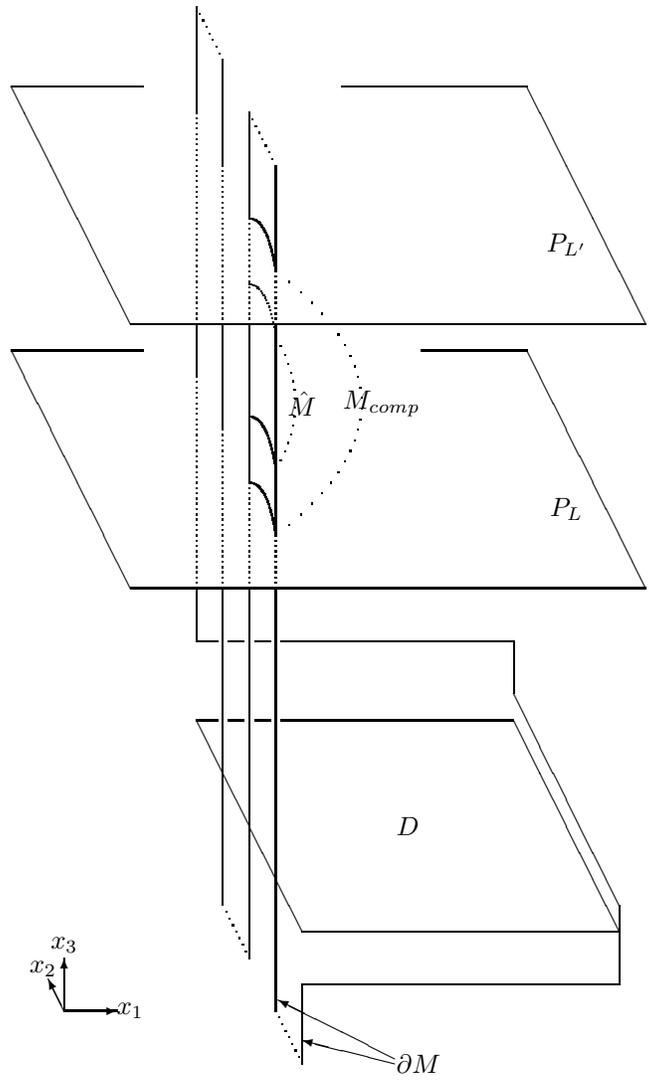
\begin{figure}
\begin{center}
\unitlength=1.0pt
\begin{picture}(-200.00,200.00)(0.00,300.00)
\put(-10.00,410.00){\makebox(0,0)[cc]{$P_{L^\prime}$}}
\put(-10.00,310.00){\makebox(0,0)[cc]{$P_{L}$}}
\put(-110.00,350.00){\makebox(0,0)[cc]{$\hat{M}$}}
\put(-80.00,350.00){\makebox(0,0)[cc]{$M_{comp}$}}
\put(-70.00,190.00){\makebox(0,0)[cc]{$D$}}
\put(-66.00,100.00){\makebox(0,0)[cc]{$\partial M$}}
\put(-175.00,120.00){\makebox(0,0)[cc]{$x_1$}}
\put(-208.00,136.00){\makebox(0,0)[cc]{$x_2$}}
\put(-200.00,145.00){\makebox(0,0)[cc]{$x_3$}}
\put(-200,120){\vector(1,0){20}}
\put(-200,120){\vector(0,1){20}}
\put(-200,120){\vector(-1,2){6}}
\put(-75,100){\vector(-4,1){35}}
\put(-75,102){\vector(-2,1){44}}
\put(-175,380){\line(1,0){195}}
\put(-175,380){\line(-1,2){45}}
\put(-220,470){\line(1,0){50}}
\put(-25,470){\line(-1,0){70}}
\put(20,380){\line(-1,2){45}}
\put(-175,280){\line(1,0){195}}
\put(-175,280){\line(-1,2){45}}
\put(-220,370){\line(1,0){50}}
\put(-25,370){\line(-1,0){40}}
\put(20,280){\line(-1,2){45}}
\put(-110,150){\line(1,0){120}}
\put(-110,150){\line(-1,2){40}}
\put(-150,230){\line(1,0){8}}
\put(-138,230){\line(1,0){6}}
\put(-128,230){\line(1,0){6}}
\put(-30,230){\line(-1,0){88}}
\put(-150,260){\line(1,0){8}}
\put(-138,260){\line(1,0){6}}
\put(-128,260){\line(1,0){6}}
\put(-30,260){\line(-1,0){88}}
\put(10,150){\line(-1,2){40}}
\put(-110,100){\line(0,1){30}}
\put(-110,130){\line(1,0){120}}
\put(10,130){\line(0,1){30}}
\put(10,160){\line(-1,2){40}}
\put(-30,240){\line(0,1){20}}
\put(-150,260){\line(0,1){20}}
\put(-140,160){\line(0,1){120}}
\put(-130,140){\line(0,1){140}}
\put(-120,120){\line(0,1){160}}
\put(-120,300){\line(0,1){80}}
\put(-130,320){\line(0,1){60}}
\put(-140,340){\line(0,1){40}}
\put(-150,360){\line(0,1){20}}
\put(-120,400){\line(0,1){40}}
\put(-130,420){\line(0,1){40}}
\put(-140,440){\line(0,1){40}}
\put(-150,460){\line(0,1){40}}
\bezier8(-120,440)(-125,450)(-130,460)
\bezier8(-140,480)(-145,490)(-150,500)
\bezier8(-110,100)(-115,110)(-120,120)
\bezier8(-130,140)(-135,150)(-140,160)
\bezier10(-120,380)(-120,390)(-120,400)
\bezier20(-130,380)(-130,400)(-130,420)
\bezier30(-140,380)(-140,410)(-140,440)
\bezier40(-150,380)(-150,420)(-150,460)
\bezier10(-120,280)(-120,290)(-120,300)
\bezier20(-130,280)(-130,300)(-130,320)
\bezier30(-140,280)(-140,310)(-140,340)
\bezier40(-150,280)(-150,320)(-150,360)
\bezier200(-120,300)(-123,320)(-130,320)
\bezier200(-120,325)(-123,345)(-130,345)
\bezier20(-120,375)(-123,395)(-130,395)
\bezier200(-120,400)(-123,420)(-130,420)
\bezier15(-120,325)(-105,350)(-120,375)
\bezier25(-120,300)(-55,350)(-120,400)
\end{picture}
\vspace{4in}
\end{center}
    \caption{The location of $\hat{M}$, as defined in Claim 5.}%
    \label{fig:lasta}
\end{figure}            

\newpage

\begin{figure}
\begin{center}
\unitlength=1.0pt
\begin{picture}(-200.00,200.00)(0.00,300.00)
\put(-45.00,295.00){\makebox(0,0)[cc]{$P_{L^\prime}$}}
\put(0.00,120.00){\makebox(0,0)[cc]{$P_L$}}
\put(-108.00,135.00){\makebox(0,0)[cc]{$M_{comp}$}}
\put(-17.00,162.00){\makebox(0,0)[cc]{$M_{comp}^{ext}$}}
\put(-150,100){\line(1,0){180}}
\put(-103,140){\vector(1,1){18}}
\put(-32,162){\vector(-1,0){25}}
\put(-210,220){\line(1,0){50}}
\bezier8(-160,220)(-140,220)(-120,220)
\bezier7(-64,220)(-47,220)(-30,220)
\put(-150,100){\line(-1,2){60}}
\put(30,100){\line(-1,2){50}}
\bezier5(-20,200)(-25,210)(-30,220)
\put(-150,200){\line(1,0){180}}
\put(-210,320){\line(1,0){180}}
\put(-150,200){\line(-1,2){60}}
\put(30,200){\line(-1,2){60}}
\put(-61,102){\line(0,1){100}}
\put(-68,116){\line(0,1){84}}
\bezier8(-68,200)(-68,208)(-68,216)
\put(-75,130){\line(0,1){70}}
\bezier15(-75,200)(-75,215)(-75,230)
\put(-82,144){\line(0,1){56}}
\bezier22(-82,200)(-82,222)(-82,244)
\put(-89,158){\line(0,1){42}}
\bezier29(-89,200)(-89,229)(-89,258)
\put(-96,172){\line(0,1){28}}
\bezier36(-96,200)(-96,236)(-96,272)
\put(-103,186){\line(0,1){14}}
\bezier43(-103,200)(-103,243)(-103,286)
\bezier50(-110,200)(-110,250)(-110,300)
\bezier50(-117,214)(-117,264)(-117,314)
\bezier200(-61,102)(-68,106)(-68,116)
\bezier200(-68,116)(-68,126)(-75,130)
\bezier200(-75,130)(-82,134)(-82,144)
\bezier200(-82,144)(-82,154)(-89,158)
\bezier20(-82,159)(-82,169)(-89,173)
\bezier20(-82,174)(-82,184)(-89,188)
\bezier20(-82,189)(-82,199)(-89,203)
\bezier8(-82,204)(-82,214)(-89,218)
\bezier8(-82,224)(-82,234)(-89,238)
\bezier200(-89,158)(-96,162)(-96,172)
\bezier200(-96,172)(-96,182)(-103,186)
\bezier200(-103,186)(-110,190)(-110,200)
\bezier10(-110,200)(-110,210)(-117,214)
\bezier200(-61,202)(-68,206)(-68,216)
\bezier200(-68,216)(-68,226)(-75,230)
\bezier200(-75,230)(-82,234)(-82,244)
\bezier200(-82,244)(-82,254)(-89,258)
\bezier200(-89,258)(-96,262)(-96,272)
\bezier200(-96,272)(-96,282)(-103,286)
\bezier200(-103,286)(-110,290)(-110,300)
\bezier200(-110,300)(-110,310)(-117,314)
\end{picture}
\vspace{4in}
\end{center}
    \caption{The construction of $M_{comp}^{ext}$.}%
    \label{fig:lastb}
\end{figure}
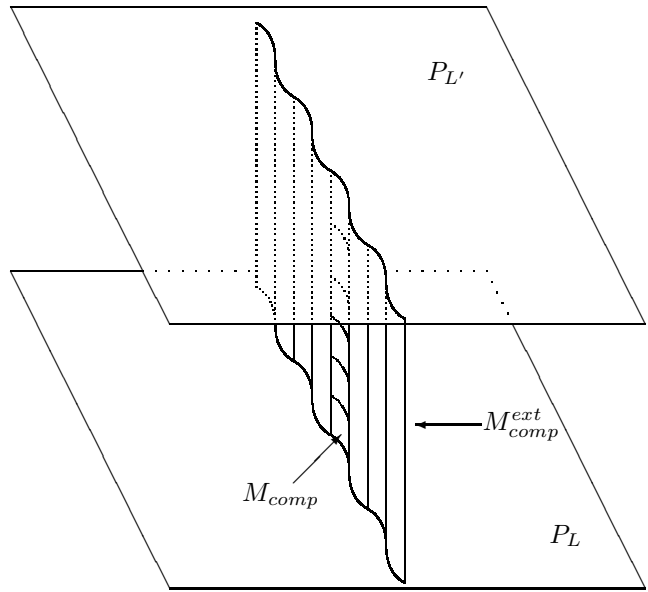            

\newpage

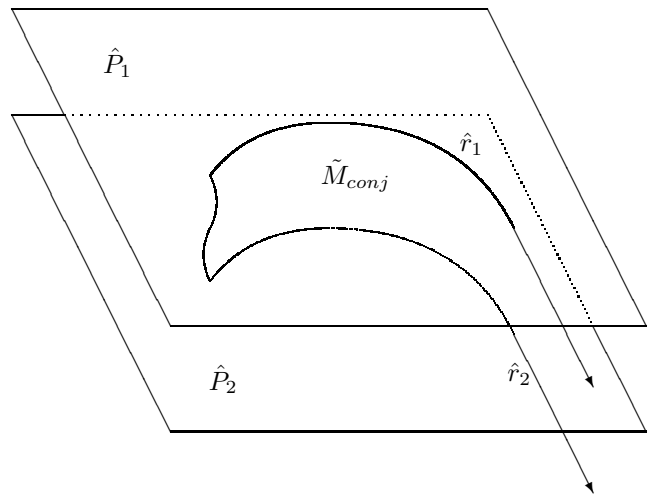
\begin{figure}
\begin{center}
\unitlength=1.0pt
\begin{picture}(-200.00,200.00)(0.00,300.00)
\put(-170.00,240.00){\makebox(0,0)[cc]{$\hat{P}_1$}}
\put(-130.00,120.00){\makebox(0,0)[cc]{$\hat{P}_2$}}
\put(-80.00,197.00){\makebox(0,0)[cc]{$\tilde{M}_{conj}$}}
\put(-36.00,209.00){\makebox(0,0)[cc]{$\hat{r}_1$}}
\put(-18.00,122.00){\makebox(0,0)[cc]{$\hat{r}_2$}}
\put(-150,140){\line(1,0){180}}
\put(-210,260){\line(1,0){180}}
\put(-150,140){\line(-1,2){60}}
\put(30,140){\line(-1,2){60}}
\put(-150,100){\line(1,0){180}}
\put(-210,220){\line(1,0){20}}
\bezier50(-190,220)(-110,220)(-30,220)
\put(-150,100){\line(-1,2){60}}
\put(30,100){\line(-1,2){20}}
\bezier50(10,140)(-10,180)(-30,220)
\bezier100(-135,157)(-120,177)(-90,177)
\bezier100(-20,137)(-40,177)(-90,177)
\put(-20,137){\vector(1,-2){30}}
\bezier1000(-135,197)(-120,217)(-90,217)
\bezier1000(-20,177)(-40,217)(-90,217)
\put(-20,177){\vector(1,-2){30}}
\bezier50(-135,197)(-130,187)(-135,177)
\bezier50(-135,157)(-140,167)(-135,177)
\end{picture}
\vspace{4in}
\end{center}
    \caption{The conjugate surface of $\tilde{M}$.}%
    \label{fig:lastc}
\end{figure}            

\newpage

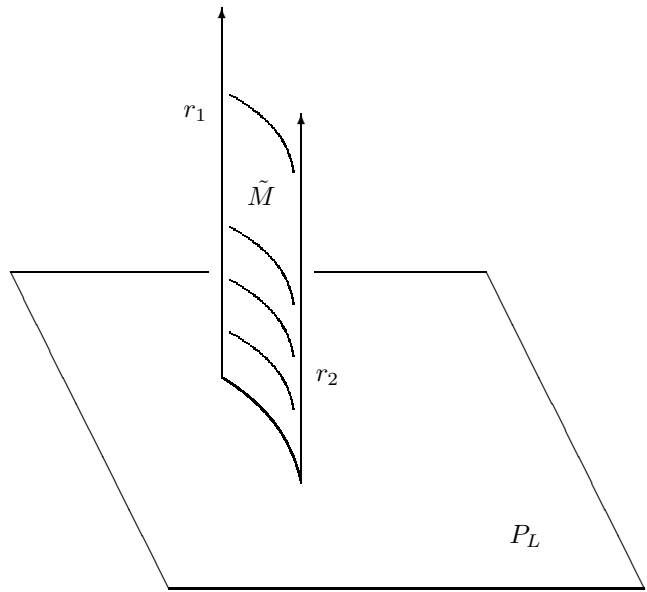
\begin{figure}
\begin{center}
\unitlength=1.0pt
\begin{picture}(-200.00,200.00)(0.00,300.00)
\put(-150.00,280.00){\makebox(0,0)[cc]{$r_1$}}
\put(-125.00,250.00){\makebox(0,0)[cc]{$\tilde{M}$}}
\put(-100.00,180.00){\makebox(0,0)[cc]{$r_2$}}
\put(-25.00,120.00){\makebox(0,0)[cc]{$P_L$}}
\put(-140,180){\vector(0,1){140}}
\put(-110,140){\vector(0,1){140}}
\put(-160,100){\line(1,0){180}}
\put(-220,220){\line(1,0){75}}
\put(-105,220){\line(1,0){65}}
\put(-160,100){\line(-1,2){60}}
\put(20,100){\line(-1,2){60}}
\bezier1000(-140,180)(-115,165)(-110,140)
\bezier50(-137,197)(-115,185)(-113,168)
\bezier50(-137,217)(-115,205)(-113,188)
\bezier50(-137,237)(-115,225)(-113,208)
\bezier50(-137,287)(-115,275)(-113,258)
\end{picture}
\vspace{4in}
\end{center}
    \caption{The surface $\tilde{M}$.}%
    \label{fig:lastd}
\end{figure}            

\end{document}